\definecolor{darkred}{rgb}{0.6,0.0,0.1}
\definecolor{darkgreen}{rgb}{0,0.5,0}
\definecolor{darkblue}{rgb}{0,0,0.5}
\renewcommand{\cite}{\citet}
\definecolor{dgreen}{rgb}{0,0.5,0}
\definecolor{dblue}{rgb}{0,0,0.9}
\definecolor{dred}{rgb}{0.6,0.0,0.1}
\definecolor{dgold}{rgb}{0.5,0.3,0.0}
\definecolor{dvio}{rgb}{0.6,0.3,0.5}
\definecolor{gray}{rgb}{0.5,0.5,0.5}
\definecolor{dbraun}{rgb}{.5,0.2,0}
\definecolor{jaune}{RGB}{255,237,111}
\definecolor{tagada}{RGB}{237,162,153}
\definecolor{violett}{RGB}{250,50,100}
\definecolor{vert}{RGB}{190,242,182}
\newcommand{\dr}{\color{dred}}
\newtheoremstyle{mysc}% name
  {3pt}%      Space above
  {3pt}%      Space below
  {\it}%         Body font
  {}%         Indent amount (empty = no indent, \parindent = para indent)
  {\color{darkred}\sc}% Thm head font
  {.}%        Punctuation after thm head
  {.5em}%     Space after thm head: " " = normal interword space;
\newtheoremstyle{myas}% name
  {3pt}%      Space above
  {3pt}%      Space below
  {\it}%         Body font
  {}%         Indent amount (empty = no indent, \parindent = para indent)
  {\color{darkblue}\sc}% Thm head font
  {.}%        Punctuation after thm head
  {.5em}%     Space after thm head: " " = normal interword space;
\newtheoremstyle{myex}% name
  {10pt}%      Space above
  {10pt}%      Space below
  {\it}%         Body font
  {}%         Indent amount (empty = no indent, \parindent = para indent)
  {\color{darkred}\sc}% Thm head font
  {.}%        Punctuation after thm head
  {.5em}%     Space after thm head: " " = normal interword space;
\theoremstyle{mysc}\newtheorem{prop}{Proposition}[section]
\theoremstyle{mysc}\newtheorem{coro}[prop]{Corollary}
\theoremstyle{mysc}\newtheorem{theo}[prop]{Theorem}
\theoremstyle{mysc}\newtheorem{lem}[prop]{Lemma}
\theoremstyle{myas}
\theoremstyle{myex}\newtheorem{rem}{Remark}
\theoremstyle{myex}\newtheorem{illu}[rem]{Illustration}
\numberwithin{equation}{section} 
\def\@fnsymbol#1{\ensuremath{\ifcase#1\or  1 \or *\or 2\or  * \or 3\or 4\or * \or \star \or  , \or 
g\or h\or i\else\@ctrerr\fi}}% 
\author{\begin{minipage}{.4\textwidth}\center
{\sc Nicolas Asin}\thanks{Institut de statistique, biostatistique et
  sciences actuarielles (ISBA), Voie du Roman Pays 20, 1348~Louvain-la-Neuve,
Belgium,  e-mail:
\url{nicolas.asin@uclouvain.be}}\,\,\thanks{Corresponding  author.}\\[.5ex]\small Universit\'e catholique
de Louvain\\\null\end{minipage}
\begin{minipage}{.4\textwidth}\center
{\sc Jan Johannes}\thanks{Institut f\"ur Angewandte Mathematik, Im Neuenheimer Feld 294,
69120 Heidelberg, Germany, e-mail:
\url{johannes@math.uni-heidelberg.de}}\\[.5ex]\small Ruprecht-Karls-Universit\"at Heidelberg \\\null\end{minipage}}
\date{} 
\title{Adaptive non-parametric estimation\\ in the presence of dependence} 
\begin{document} 
\maketitle 
% --------------------------------------------------------------------
% <<Abstract>>
% --------------------------------------------------------------------
\begin{abstract}
  We consider non-parametric estimation problems in the presence of
  dependent data, notably non-parametric regression with random design
  and non-parametric density estimation.  The proposed estimation
  procedure is based on a dimension reduction.  The minimax optimal
  rate of convergence of the estimator is derived assuming a
  sufficiently weak dependence characterized by fast decreasing mixing
  coefficients.  We illustrate
  these results by considering classical smoothness
  assumptions. However, the proposed estimator requires an optimal
  choice of a dimension parameter depending on certain characteristics
  of the function of interest, which are not known in practice. The
  main issue addressed in our work is an adaptive choice of this
  dimension parameter combining model selection and Lepski’s
  method. It is inspired by the recent work of
  \cite{GoldenshlugerLepski2011}. We show that this data-driven
  estimator can attain the lower risk bound up to a constant provided
  a fast decay of the mixing coefficients.
\end{abstract} 
{\footnotesize
\begin{tabbing} 
\noindent \emph{Keywords:} \=Density estimation, non-parametric
regression, dependence, mixing, minimax theory, adaptation\\[.2ex] 
\noindent\emph{AMS 2000 subject classifications:} Primary 62G05; secondary 62G07, 62G08. 
\end{tabbing}}
% --------------------------------------------------------------------
% <<Content>>
% --------------------------------------------------------------------
%======================================================================================================================
%                                                                 
% Title: Introduction
% Author: Jan JOHANNES, Nicolas Asin
% 
% Email: jan.johannes@ensai.fr nicolas.asin@uclouvain.be
% Date: %%ts latex start%%[2016-02-01 Mon 02:12]%%ts latex end%%
% Main-TeX-File: t in der form "Name"
%
% ======================================================================================================================
\section{Introduction}\label{s:in}
We study the non-parametric estimation of a functional parameter of
interest $\So$ based on a sample of identically distributed random
variables $Z_1,\dotsc,Z_n$. For convenience, the function of interest $\So$
belongs to the Hilbert space $\Hspace:=\Hspace[0,1]$ of square integrable
real-valued functions defined on $[0,1]$ which is endowed with its usual
inner product $\Hskalar$ and its induced norm $\Hnorm$. In this paper
we study the attainable accuracy of a fully data-driven estimator of
$\So$ for independent as well as dependent observations
$Z_1,\dotsc,Z_n$ from a minimax point of view. The estimator is based
on an orthogonal series approach where the fully data-driven selection
of the dimension parameter is inspired by the recent work of
\cite{GoldenshlugerLepski2011}.  We derive conditions that allow us
to bound the maximal risk of the fully data-driven estimator over
suitable chosen classes $\cF$ for $\So$, which are
constructed flexibly enough to characterize, in particular,
differentiable or analytic functions. Considering two classical non-parametric problems, namely
non-parametric density estimation and non-parametric regression with
random design, we show that these conditions indeed hold true, if the identically distributed 
observations $Z_1,\dotsc,Z_n$ are independent (iid.) or weakly dependent
with sufficiently fast decay of their $\beta$-mixing
coefficients. Thereby, we establish  the rate of convergence of the
 fully data-driven estimator for independent as well as weakly dependent
observations.
Considering iid. observations we show that these rates of convergence
are minimax-optimal for a wide variety of classes $\cF$, and hence
the  fully data-driven estimator is called adaptive. Replacing  the independence
assumption by mixing conditions the rates of convergence of the fully
data-driven estimator are generally slower. A comparison, however,
allows us to state conditions on the mixing coefficients which ensure
that the fully data-driven estimator still attains the minimax-optimal
rates for a wide variety of classes of $\cF$, and hence, is adaptive.
The adaptive non-parametric estimation based on weakly dependent
observations of either a
density or a regression function has been consider by  \cite{TribouleyViennet1998},
\cite{ComteMerlevede2002}, \cite{ComteRozenholc2002}, \cite{GannazWintenberger2010},
\cite{ComteDedeckerTaupin2008} or \cite{BertinKlutchnikoff2014}, to
name but a view. However, our conditions to derive rates of convergence of
the fully-data driven estimator can be verified for both, non-parametric density estimation and non-parametric regression with
random design. Thereby, we think that these conditions provide a promising starting point to deal with more complex non-parametric
models, as for example, errors in variables model. 

The paper is organized as follows: in Section \ref{s:mo} we introduce
our basic assumptions, define the class $\Soc$ and develop the data-driven orthogonal series
estimator. We present key arguments of the proofs while technical
details are postponed to the Appendix.
We show, in Section \ref{s:id}, the minimax-optimality of the
data-driven estimator of a density as well as a regression function
based on iid. observations. In Section \ref{s:dd} we briefly review
elementary dependence notions and present standard coupling arguments. 
Considering again the non-parametric estimation of a density as well as a
regression function we derive mixing conditions such that the fully data-driven estimator based on
dependent observations  can attain the minimax-rates for independent
data. Finally, considering  the framework used by \cite{GannazWintenberger2010} and
\cite{BertinKlutchnikoff2014} results of a simulation study are
reported in Section \ref{s:sim} which allow to compare the finite sample performance of different
data-driven estimators of a density as well as a regression function
given independent or dependent observations.

%%% Local Variables: 
%%% mode: latex
%%% TeX-master: "_0DP_NPE_dep"
%%% End: 

%======================================================================================================================
%                                                                 
% Title: Model assumptions and notations
% Author: Jan JOHANNES, Nicolas Asin
% 
% Email: jan.johannes@ensai.fr nicolas.asin@uclouvain.be
% Date: %%ts latex start%%[2016-02-01 Mon 02:15]%%ts latex end%%
% Main-TeX-File: t in der form "Name"
%
% ======================================================================================================================
\section{Model assumptions and notations}\label{s:mo}
% --------------------------------------------------------------------
% <<Assumptions \label{s:mo:no}>>
% --------------------------------------------------------------------
\subsection{Assumptions and notations}\label{s:mo:no}
We construct an estimator of the unknown function $\So$ using an
orthogonal series approach. The estimation of $\So$ is based on a
dimension reduction which we elaborate in the following. Let us
specify an arbitrary orthonormal system $\set{\bas_j}_{j=1}^\infty$ of
$\Hspace$. We denote by $\Proj_\Phi$
and $\Proj^\perp_\Phi$ the orthogonal projections on the linear subspace $\Phi$ spanned by this orthonormal system and its orthogonal complement $\Phi^{\perp}$ in $\Hspace$, respectively. Consequently, any function
$h\in\Phi$ admits an expansion $h =\sum_{j=1}^\infty
\fou{h}_j\bas_j$ as a generalised Fourier series with coefficients
$\fou{h}_j:=\HskalarV{h,\bas_j}$ for $j\geq1$. The unknown function
$\So\in\Hspace$ is thereby uniquely determined by its coefficients
$(\fSo_j)_{j\geq1}$, or $\fSo$ for short, and $\Proj^\perp_\Phi\So$. In what follows $\Proj^\perp_\Phi\So$ is know in advance while the sequence of  coefficients $\fSo$ has to be estimated.  Given a dimension parameter $\Di\geq1$ we have
the subspace $\Dz_\Di$ spanned by the first $\Di$ basis functions
$\set{\bas_j}_{j=1}^\Di$ at our disposal. For abbreviation, we denote by $\Proj_\Di$ and $\Proj^\perp_\Di$ the orthogonal projections on the linear subspace $\Dz_\Di$ and its orthogonal complement $\Dz_\Di^{\perp}$ in $\Phi$, respectively. We consider the orthogonal
projection $\DiSo:=\Proj^\perp_\Phi\So+\Proj_{\Di}\So$ of $\So$
admitting the expansion $\Proj_\Di\DiSo=\sum_{j=1}^\Di\fSo_j\bas_j$ and its associated approximation error
$\bias(\So):=\HnormV{\DiSo-\So}=\HnormV{\Proj^\perp_\Di\So}$ where
$\bias(\So)$ tends to zero as $\Di\to\infty$ for all $\So\in \Hspace$
due to the dominated convergence theorem. We consider an orthogonal
series estimator $\hDiSo[\Di]$ by replacing, for $j=1,\dotsc,\Di$, the
coefficient $\fSo_j$ by its empirical counterpart $\hfSo_j$, that is,
$\hDiSo[\hDi]=\sum_{j=1}^\Di\hfSo_j\bas_j$. The attainable accuracy of
the  proposed estimator of $\So$  are basically determined by a priori
conditions on $\So$. These conditions are often expressed in the form
$\So\in\Soc$, for a suitably chosen class $\Soc\subset\Hspace$. This class $\Soc$ reflects prior
 information on the function $\So$, e.g., its level of smoothness, 
and will be constructed flexibly enough to characterize, in particular, differentiable or analytic functions.
% --------------------------------------------------------------------
% <<Minimal regularity assumption>>
% --------------------------------------------------------------------
We determine the class
$\Soc$ by means of a weighted norm in $\Phi$. Given the
orthonormal basis $\set{\bas_j}_{j=1}^\infty$ of $\Phi$ and a strictly positive  sequence of weights
$(\Sow_j)_{j\geq1}$, or $\Sow$ for short,  we define for
$h\in\Phi$ the weighted norm
$\wnormV{h}^2:=\sum_{j\in\Nz}\Sow_j^{-1}\fou{h}_j^2$. Furthermore, we denote by $\Phi_{\Sow}$ and  $\Phi_\Sow^\Sor$ for a constant $\Sor>0$, the completion of $\Phi$ with respect to
$\wnorm$ and the ellipsoid $\Phi_\Sow^\Sor:=\set{h\in \Phi : \wnormV{ h}^2\leq \Sor^2}$. Obviously, for a non-increasing sequence $\Sow$ the class
$\Phi_{\Sow}^\Sor$ is a subspace of $\Phi$. Here and subsequently, we  assume
that there exist a monotonically non-increasing and strictly positive
sequence of weights $\Sow$ tending to zero and a constant $\Sor>0$ such  that 
 the function of interest $\So$ belongs to the $\Socwr:=\set{f\in \Hspace : \Proj_{\Phi}f \in\Phi_{\Sow}^\Sor}$.
We may emphasize that for any
$\So\in\Socwr$, $\bias^2(\So)=\sum_{j>m}(\Sow_j/\Sow_j)\fSo_j^2\leq
\Sow_\Di\wnormV{\Proj_{\Phi}\So}^2\leq \Sow_\Di \Sor^2$ which we use in the sequel
without further reference.

% --------------------------------------------------------------------
% <<Basis>>
% --------------------------------------------------------------------
Further denote by $\InormV{h}$  as usual the  $\Ispace$ norm of a function
$h\in\Hspace$.  We require in the sequel that the orthonormal system $\set{\bas}_j$ and the sequence $\ga$ satisfy the following assumptions.
\begin{enumerate}[label={\textbf{(A\arabic*)}},ref={\textbf{(A\arabic*)}}]
\item\label{mo:no:as:ba:i} There exists a finite constant
  $\maxnormsup\geq1$ such that $\InormV{\sum_{j=1}^\Di\bas_j^2}\leq
  \maxnormsup^2\Di$ for all $\Di\in\Nz$. 
\item\label{mo:no:as:ba:ii} The sequence $\ga$ is monotonically decreasing with limit zero and there exists a finite constant
  $\gA\geq1$ such that $\InormV{\sum_{j\geq1}\Sow_j\bas_j^2}\leq \gA^2$.
\end{enumerate}
According to Lemma 6 of \cite{BirgeMassart1997} assumption \ref{mo:no:as:ba:i} is
exactly equivalent to following property: there exists a positive constant $\maxnormsup$ such that for any
  $h\in\Dz_\Di$ holds $\InormV{h}\leq \maxnormsup\sqrt{\Di} \HnormV{h}$.
Typical example are bounded basis, such as the trigonometric
basis, or basis satisfying the assertion, that there exists a positive constant $C_\infty$ such that for any
$(c_1,\dotsc,c_\Di)\in\Rz^\Di$,  $\InormV{\sum_{j=1}^\Di
  c_j\bas_j}\leq C_\infty\sqrt{\Di}\absV{c}_\infty$ where
$\absV{c}_\infty=\max_{1\leq j\leq\Di}\absV{c_j}$.
\cite{BirgeMassart1997} have shown that the last property is satisfied
for piecewise polynomials, splines and wavelets. On the other hand
side, in the case of a bounded basis the property
\ref{mo:no:as:ba:ii} holds for any summable weight sequence $\Sow$,
i.e.,  $\pabsV{\Sow}:=\sum_{j\geq1}\Sow_j<\infty$. More generally,
under \ref{mo:no:as:ba:i} the additional assumption $\sum_{j\geq1}j\Sow_j<\infty$ is
sufficient to ensure \ref{mo:no:as:ba:ii}. Furthermore, under
\ref{mo:no:as:ba:ii} the elements of $\Phi_\Sow^\Sor$ are bounded
uniformly, that is $\InormV{h}^2\leq\InormV{\sum_{j\geq1}\Sow_j\bas_j^2}\wnormV{h}^2\leq\gA^2\Sor^2<\infty$  for any $h\in\Phi_\Sow^\Sor$.

% --------------------------------------------------------------------
% <<Observations \label{s:mo:ob}>>
% --------------------------------------------------------------------

\subsection{Observations}\label{s:mo:ob}
In this work we focus on two models, namely non-parametric regression
with random design and non-parametric density estimation. The important point to note here is  that in each model the identically distributed (i.d.) observations $Z_1,\dotsc, Z_n$ satisfy  $\Ex\psi_j(\Ob_i)=\fSo_j$ for a certain function $\psi_j$ , $j\geq 1$. Therefore, given an i.d. sample
$\{Z_i\}_{i=1}^n$, it is natural to
consider the estimator $\hfSo_j=n^{-1}\sum_{i=1}^n\psi_j(Z_i)$ of $\fSo_j$.\\
\paragraph{Non-parametric regression.}
A common problem in statistics is to investigate the dependence of a
real random variable $\rOb$ on the variation of an explanatory random
variable $\rRe$. For convenience, the regressor $\rRe$ is supposed to
be uniformly distributed on the interval $[0,1]$, i.e., $\rRe\sim
\cU[0,1]$. In this paper, the dependence of $\rOb$ on $\rRe$ is
characterised by $\rOb=\So(\rRe)+\rNoL\rNo$, for $\rNoL>0$, where $\So\in\Hspace$ is an unknown function and  $\rNo$ is a centred and standardised error term. Furthermore, we
suppose that $\rNo$ and $\rRe$ are independent. Keeping in mind the expansion
$\So=\sum_{j=1}^\infty\fSo_j\bas_j$ with respect to the basis
$\set{\bas_j}_{j=1}^\infty$ we observe that $\fSo_j=\Ex(\psi_j(\rOb,\rRe))$ with $\psi_j(\rOb,\rRe)=\rOb\bas_j(\rRe)$
for all $j\geq1$.
\paragraph{Non-parametric density estimation.} Let $\dOb$ be a random
variable taking its values in $[0,1]$ and admitting a density $\So$
which belongs to the set  $\cD$ of all densities with
support included in $[0,1]$. We focus on the non-parametric estimation
of the density $\So$ if it is in addition square integrable, i.e.,
$\So\in\Hspace$. For convenient
notations, let $\1(t):=1,$ $t\in[0,1]$  and
$\{\1\}\cup\{\bas_j\}_{j=1}^\infty$ be an orthonormal basis of
$\Hspace$.  Keeping in mind that $\So$ is a density, it admits an
expansion $\So= \1+ \sum_{j=1}^\infty \fSo_j\bas_j$ where
$\fSo_j=\Ex[\bas_j(\dOb)]$
for all $j\geq1$. 
In this context we notice that $\Phi^{\perp}$ is spanned by $\1$. Since $\So$ is a density function  we have $\Proj^\perp_\Phi\So=\1$, which is obviously known in advance.

% --------------------------------------------------------------------
% <<Methodology \label{s:mo:me}>>
% --------------------------------------------------------------------
\subsection{Methodology and background}\label{s:mo:me}
For the simplicity of the presentation, we assume throughout this section that $\So\in\Phi$, that is $\Proj^\perp_\Phi\So=0$. The orthogonal projection $\DiSo=\sum_{j=1}^\Di\fSo_j\bas_j$ at hand let us define an
orthogonal series estimator by replacing  for $j=1,\dotsc, \Di$ the unknown coefficient
$\fSo_j$ by its empirical mean
$\hfSo_j=n^{-1}\sum_{i=1}^n\psi_j(\Ob_i)$, that is,
$\hDiSo=\sum_{j=1}^\Di\hfSo_j\bas_j$.  We shall assess the accuracy
 of the estimator $\hDiSo$  by its maximal integrated mean
 squared error with respect to the class $\Soc$, that is $\Rif{\hSo}{\Soc}:=\sup_{\So\in\Soc}\Ex\HnormV{\hSo-\So}^2$.
 Considering identically and
independent distributed (iid.) observation obeying the two models, non-parametric
regression  and density estimation, we
derive a lower bound for the maximal risk over $\Soc$ for all estimators and show that
it provides up to a  positive constant $C$ possibly depending on the class  $\Soc$ also an upper bound for the
maximal risk over $\Soc$ of the orthogonal series estimator
$\hDiSo[\oDi]$ with suitable chosen dimension parameter $\oDi\in\Nz$, i.e.,
 \begin{equation*}
   \Rif{\hDiSo[\oDi]}{\Soc}\leq C\cdot \inf_{\tSo}\Rif{\tSo}{\Soc}
 \end{equation*}
where the infimum is taken over all estimators of $\So$. We thereby
prove the minimax optimality of the estimator $\hDiSo[\oDi]$. 
Obviously, if the observations are independent or sufficiently weak dependent   there exists a finite constant $C>0$
  possibly depending on the class $\Socwr$ such that $\sup_{\So\in\Socwr}\sum_{j=1}^m
\Var(\hfSo_j)\leq C \Di n^{-1}$ for all $\Di,n\geq1$.
 From the Pythagorean formula
we obtain the identity  $\HnormV{\hDiSo-\So}^2=
\HnormV{\hDiSo-\DiSo}^2+\bias^2(\So)$ and, hence together with
$\bias^2(\So)\leq\Sow_\Di\Sor^2$ for all $\So\in\Socwr$  follows
\begin{equation}\label{mo:me:ri:ub}
	\Rif{\hDiSo}{\Socwr}\leq\Sow_\Di\Sor^2+C \Di
        n^{-1}=(\Sor^2+C)\max(\Sow_\Di,\Di n^{-1}).
\end{equation}
The upper bound in the last display depends on the dimension parameter $\Di$ and hence
by choosing an optimal value $\oDi$  the upper bound  will be minimized which we formalize next.
For  a sequence $(a_m)_{m\geq1}$ with minimal
value in $A$ we set $\argmin\nolimits_{m\in A}\set{a_m}:=\min\{m:a_m\leq a_{k},\forall k\in A\}$ and define for all  $n,\Di\geq1$
\begin{multline}\label{mo:me:de:ra}
  \mRa:=\mRa(\Sow):=[\Sow_\Di\vee\Di n^{-1}]:=\max(\Sow_\Di,\Di
  n^{-1}), \\ \oDi:=\oDi(\Sow):=\argmin_{\Di\in\Nz}\set{\mRa}\quad \text{ and }\quad \oRa:=\oRa(\Sow):=\mRa[\oDi]=\min_{\Di\in\Nz}\mRa.
\end{multline}
From \eqref{mo:me:ri:ub} we deduce that $\Rif{\hDiSo[\oDi]}{\Socwr}\leq (\Sor^2+C)\oRa$ for all $n\geq 1$.  Moreover if it is possible to show that
  $\oRa$  provides up to a constant also a lower bound of $\Rif{\hDiSo[\oDi]}{\Socwr}$ then the
  estimator $\hDiSo[\oDi]$ with optimal chosen $\oDi$ is
  minimax rate-optimal. However, $\oDi$ depends on the unknown
  regularity of $\So$ and hence we will introduce below  a data-driven
  procedure to select the dimension parameter. Let us first briefly illustrate the last definitions by stating the order of  $\oDi$ and $\oRa$ for typical choices of the sequence $\ga$. 
 \begin{illu}\label{illus} We will illustrate all our results considering the following two configurations for the sequence $\ga$. Here and subsequently, we use for two strictly positive sequences $(x_n)_{n\geq1}$, $(y_n)_{n\geq1}$ the notation $x_n \asymp y_n$ if $(x_n/y_n)_{n\geq1}$ is bounded away both from zero and infinity. Let,
 \begin{enumerate}
\item[(p)]\label{ill:expo} $\ga_j=|j|^{-2p}$, $j\geq 1$, with $p>1$, then
$\oDi\asymp n^{-1/(2p+1)}$and $\oRa\asymp n^{-2p/(2p+1)}$;
\item[(e)]\label{ill:poly} $\ga_j=\exp(|j|^{-2p})$, $j\geq 1$,  with $p>0$, then $\oDi\asymp (\log(n))^{1/2p}$ and $\oRa\asymp  n^{-1}(\log(n))^{1/2p}$.
\end{enumerate}We note that the assumption \ref{mo:no:as:ba:ii} and $(\oRa)^{-1}\min(\Sow_{\oDi}, \oDi n^{-1})\asymp 1$ hold true in both cases.
 \end{illu}

% --------------------------------------------------------------------
% <<Adaptive Estimation>>
% --------------------------------------------------------------------
Our selection method of the dimension parameter is inspired by the work of
\cite{GoldenshlugerLepski2011} and combines the techniques of model
selection and Lepski's method. We determine the dimension parameter
among a collection of admissible values by minimizing a penalized
contrast function. To this end, for all $n\geq1$ let  $(\pen[1],...,\pen[\DiMa])$ be a subsequence of non-negative and non-decreasing penalties. We select $\tDi$ among the collection $\set{1,\dotsc,\DiMa}$ such that: 
\begin{equation}\label{mo:me:de:wtm}
\tDi=\argmin_{1\leq m\leq \DiMa}\set{\contr+\pen}
\end{equation}
where the contrast is defined by $\contr:=\max_{m\leq k\leq \DiMa}
\set{\HnormV{\hDiSo-\hDiSo[k]}^2-\pen[ k]}$ for all $1\leq m\leq \DiMa$. The data-driven estimator is now given by $\hDiSo[\tDi]$ and our
aim is to prove an upper bound for its maximal risk
$\Rif{\hDiSo[\widetilde{m}]}{\Socwr}$. We outline next the main ideas
of the proof and introduce conditions which we will show below hold
indeed true for the two considered non-parametric estimation problems. A key argument is the next lemma due to \cite{ComteJohannes2012}.
\begin{lem}\label{mo:me:le:ms}
If $\left(\pen[1],\dotsc,\pen[\DiMa]\right)$ is a non-decreasing subsequence
and $1\leq \Di\leq \DiMa$, then
\begin{eqnarray*}
\HnormV{\hDiSo[\widetilde{\Di}]-\So}^2\leq 85 \max(\bias^2(\So),\pen) +42 \max_{\Di\leq k\leq \DiMa}\vectp{ \HnormV{\hDiSo[k]-\DiSo[k]}^2 - \pen[k]/6}
\end{eqnarray*}
where $(x)_+:=\max(x,0)$.
\end{lem}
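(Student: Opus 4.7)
The plan is to follow a Goldenshluger-Lepski-style decomposition: split $\HnormV{\hDiSo[\widetilde{\Di}] - \So}^2$ into a deterministic bias, a stochastic piece controlled by the $(\cdot)_+$ maximum appearing in the lemma, and a selection remainder governed by the defining variational inequality $\contr[\widetilde{\Di}] + \pen[\widetilde{\Di}] \leq \contr[m] + \pen[m]$ valid for every $m$. Writing $M := \max_{\Di \leq k \leq \DiMa}\vectp{\HnormV{\hDiSo[k]-\DiSo[k]}^2 - \pen[k]/6}$ for the quantity appearing on the right-hand side, the starting inequality is
\begin{equation*}
\HnormV{\hDiSo[\widetilde{\Di}] - \So}^2 \leq 3\HnormV{\hDiSo[\widetilde{\Di}] - \hDiSo[\Di]}^2 + 3\HnormV{\hDiSo[\Di] - \DiSo[\Di]}^2 + 3\bias^2(\So),
\end{equation*}
in which the last term is already of the required form, and the middle term is bounded directly by $\HnormV{\hDiSo[\Di] - \DiSo[\Di]}^2 \leq M + \pen[\Di]/6$ from the definition of $M$.

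The selection remainder is handled by two cases. If $\widetilde{\Di} \leq \Di$, taking $k = \Di$ inside the max defining $\contr[\widetilde{\Di}]$ yields $\HnormV{\hDiSo[\widetilde{\Di}] - \hDiSo[\Di]}^2 \leq \contr[\widetilde{\Di}] + \pen[\Di]$; combining with the selection inequality and the monotonicity of $\pen[\cdot]$ gives $\HnormV{\hDiSo[\widetilde{\Di}] - \hDiSo[\Di]}^2 \leq \contr[\Di] + 2\pen[\Di]$. If $\widetilde{\Di} > \Di$, taking $k = \widetilde{\Di}$ inside the max defining $\contr[\Di]$ gives analogously $\HnormV{\hDiSo[\widetilde{\Di}] - \hDiSo[\Di]}^2 \leq \contr[\Di] + \pen[\widetilde{\Di}]$, leaving the surplus $\pen[\widetilde{\Di}]$ to be absorbed.

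Both branches rely on a uniform upper bound for the contrast. For each $k \geq m$, expanding $\HnormV{\hDiSo[m] - \hDiSo[k]}^2 \leq 3\HnormV{\hDiSo[m] - \DiSo[m]}^2 + 3\HnormV{\DiSo[m] - \DiSo[k]}^2 + 3\HnormV{\hDiSo[k] - \DiSo[k]}^2$, using the Pythagorean bound $\HnormV{\DiSo[m] - \DiSo[k]}^2 \leq \bias^2(\So)$ for $k \geq m \geq \Di$, the algebraic identity $3x - \pen[k] = 3(x - \pen[k]/6) - \pen[k]/2$ applied to $x = \HnormV{\hDiSo[k] - \DiSo[k]}^2$, and the monotonicity $\pen[k] \geq \pen[m]$, I obtain after taking the maximum over $k \geq m$
\begin{equation*}
\contr[m] + \pen[m]/2 \leq 3\HnormV{\hDiSo[m] - \DiSo[m]}^2 + 3\bias^2(\So) + 3M.
\end{equation*}
Substituting the elementary bound $\HnormV{\hDiSo[m] - \DiSo[m]}^2 \leq M + \pen[m]/6$ makes the $\pen[m]/2$ on the left cancel the $3\cdot\pen[m]/6$ on the right, delivering the clean estimate $\contr[m] \leq 6M + 3\bias^2(\So)$ for $m \in \{\Di, \widetilde{\Di}\}$ whenever $\widetilde{\Di} \geq \Di$.

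The main obstacle is closing Case 2: absorbing the surplus $\pen[\widetilde{\Di}]$. The intended mechanism is to combine the selection inequality rewritten as $\pen[\widetilde{\Di}] \leq \contr[\Di] + \pen[\Di] - \contr[\widetilde{\Di}]$ with the upper bound $\contr[\Di] \leq 6M + 3\bias^2(\So)$ and the same $\pen[\widetilde{\Di}]$-cancellation in the analogous bound on $\contr[\widetilde{\Di}]$, so as to produce an inequality $\pen[\widetilde{\Di}] \leq C_1 M + C_2 \bias^2(\So) + C_3 \pen[\Di]$ with explicit constants. Once this is settled, substituting every piece back into the initial triangle decomposition and grouping terms into $\max(\bias^2(\So), \pen[\Di])$ and $M$ is purely a bookkeeping exercise that produces the claimed prefactors $85$ and $42$.
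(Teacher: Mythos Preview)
The paper does not give its own proof of this lemma; it attributes the result to Comte and Johannes (2012) and uses it as a black box. Your Case~1 and your uniform contrast bound $\contr[m] \leq 6M + 3\bias^2(\So)$ for $m \geq \Di$ are correct and match the standard Goldenshluger--Lepski bookkeeping.

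The genuine gap is in Case~2. Your ``intended mechanism'' is to combine $\pen[\widetilde{\Di}] \leq \contr[\Di] + \pen[\Di] - \contr[\widetilde{\Di}]$ with a lower bound on $\contr[\widetilde{\Di}]$, but the only lower bound the definition supplies is $\contr[\widetilde{\Di}] \geq -\pen[\widetilde{\Di}]$, which collapses the inequality to the triviality $0 \leq \contr[\Di] + \pen[\Di]$ and gives no control on $\pen[\widetilde{\Di}]$. In fact, with the contrast written \emph{without} a positive part one checks that $\contr[m] + \pen[m] \geq 0$ for every $m$ (take $k=m$ in the max) and $\contr[\DiMa] + \pen[\DiMa] = 0$, so the minimum over $m$ is always zero and $\contr[\widetilde{\Di}] = -\pen[\widetilde{\Di}]$ exactly at the selected index; the selection inequality then carries no information about $\pen[\widetilde{\Di}]$ whatsoever. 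No ``analogous $\pen[\widetilde{\Di}]$-cancellation'' can rescue this, because the contrast bound you derived is an \emph{upper} bound on $\contr[\widetilde{\Di}]$, whereas here a lower one is needed.

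The fix used in Comte--Johannes and throughout the GL literature is to take the contrast with a positive part,
\[
\contr[m] := \max_{m\le k\le \DiMa}\bigl(\HnormV{\hDiSo[m]-\hDiSo[k]}^2 - \pen[k]\bigr)_+,
\]
which forces $\contr[\widetilde{\Di}] \geq 0$. Then for $\widetilde{\Di} > \Di$ the selection inequality gives directly $\pen[\widetilde{\Di}] \leq \contr[\widetilde{\Di}] + \pen[\widetilde{\Di}] \leq \contr[\Di] + \pen[\Di] \leq 6M + 3\bias^2(\So) + \pen[\Di]$, and Case~2 closes via $\HnormV{\hDiSo[\widetilde{\Di}] - \hDiSo[\Di]}^2 \leq \contr[\Di] + \pen[\widetilde{\Di}] \leq 12M + 6\bias^2(\So) + 2\pen[\Di]$. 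Substituting this and the Case~1 bound into your initial three-term split and collecting terms then produces the constants $85$ and $42$.
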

\noindent  Keeping in mind that $\bias^2(\So)\leq\Sow_\Di\Sor^2$ for all
$\So\in\Socwr$ we impose  the following condition.
\begin{enumerate}[label={\textbf{(C\arabic*)}},ref={\textbf{(C\arabic*)}}]
\item\label{mo:me:as:est:A1} There exists a finite constant $\delta>0$
  possibly depending on the class $\Socwr$ such that
  $\sup_{\So\in\Socwr}\max_{1\leq \Di\leq \DiMa} \{\pen/\Di\}\leq \delta n^{-1}$ for all $n\geq1$.
\end{enumerate}
Under condition \ref{mo:me:as:est:A1} and employing and $\mRa[\Di]=\max(\Sow_\Di,\Di n^{-1})$ we have due to Lemma
\ref{mo:me:le:ms} that for all $1\leq \Di\leq \DiMa$
\begin{equation}\label{an:ri:dd:e1}
\sup_{\So\in\Socwr}\Ex\HnormV{\hDiSo[\tDi]-\So}^2\leq
85(\Sor^2\vee\delta) \mRa[\Di] +42 \sup_{\So\in\Socwr}\Ex \max_{\Di\leq k\leq \DiMa}\vectp{ \HnormV{\hDiSo[k]-\DiSo[k]}^2 - {\pen[k]}/{6}}.
\end{equation}
Keeping mind that $\oRa=\min_{\Di\in\Nz}\mRa[\Di]=\mRa[\oDi]$ where $\oDi=\argmin_{\Di\in\Nz}\{\mRa[\Di]\}$ realises a
variance-squared-bias compromise among all values in $\Nz$. Considering  
 the subset $\{1,\dotsc,\DiMa\}$ rather than $\Nz$ we
have trivially  $\oRa=\min_{1\leq\Di\leq\DiMa}\mRa$
if $\oDi\leq\DiMa$. On the other hand,
since $\oRa=o(1)$ as $n\to\infty$ there exists  $n_{\aSy}\in\Nz$ with
$\oRa[n_{\aSy}]\leq 1$ for all $n\geq n_{\aSy}$ which in turn implies
$\oDi\leq n$ for all $n\geq n_{\aSy}$. Indeed, $\oDi n^{-1}\leq
\oRa\leq \oRa[{n_{\aSy}}]\leq 1$ for all  $n\geq n_{\aSy}$ implies that
$\oDi\leq n$. Thereby, we have
$\oRa=\min_{1\leq\Di\leq\DiMa}\mRa$ for all $n\geq n_{\aSy}$. Consequently, from \eqref{an:ri:dd:e1} follows for all $n\geq
n_{\aSy}$ 
\begin{equation}\label{an:ri:dd:e2}
\Rif{\hDiSo[\tDi]}{\Socwr}\leq85(\delta\vee\Sor^2)\mRa[\oDi]+42\sup_{\So\in\Socwr}\Ex\max_{\oDi\leq \Di\leq n}\vectp{\HnormV{\hDiSo-\DiSo}^2 -\pen/6}.
\end{equation}
The second right hand side (rhs.) term in the last display we bound
using the next condition.
\begin{enumerate}[label={\textbf{(C\arabic*)}},ref={\textbf{(C\arabic*)}}]\addtocounter{enumi}{1}
\item\label{mo:me:as:est:A2} 
There exists a finite constant $\Delta>0$ possibly depending on the class $\Socwr$ such that
$\sup_{\So\in\Socwr}\Ex\set{\max_{\oDi\leq \Di\leq\DiMa}\vectp{\HnormV{\hDiSo-\DiSo}^2 -1/6\pen}}\leq \Delta n^{-1}$ for all $n\geq1$.
\end{enumerate}
From \eqref{an:ri:dd:e2} together with 
\ref{mo:me:as:est:A2} it follows that
\begin{equation}\label{mo:me:as:est:e} 
\Rif{\hDiSo[\tDi]}{\Socwr}\leq 85(\delta\vee\Sor^2)\mRa[\oDi] +42\Delta
n^{-1},\quad\mbox{for all }n\geq1.
\end{equation}
The next assertion is an
immediate consequence and hence we omit its proof.
\begin{prop}\label{an:pr:dd}
Let \ref{mo:me:as:est:A1} and \ref{mo:me:as:est:A2} be
satisfied, then  for all $n\geq n_{\aSy}$ holds
\begin{equation*}
\Rif{\hDiSo[\tDi]}{\Socwr}\leq 85(\delta\vee\Sor^2)\oRa +42\Delta
n^{-1}\leq 127(\delta\vee\Sor^2\vee\Delta)\,\oRa,\quad\mbox{for all }n\geq n_{\aSy}
\end{equation*}
where  $n_{\aSy}\in\Nz$ satisfies  $\oRa[n_{\aSy}]\leq 1$.
\end{prop}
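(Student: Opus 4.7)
The plan is to collate the three displayed inequalities that precede the statement into one chain and then merge the two surviving terms using $n^{-1}\leq\oRa$.

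First, I would reinvoke Lemma~\ref{mo:me:le:ms}. For every $1\leq\Di\leq\DiMa$ and every $\So\in\Socwr$ the lemma together with the a-priori bound $\bias^2(\So)\leq\Sow_\Di\Sor^2$ and condition \ref{mo:me:as:est:A1} (which supplies $\pen\leq\delta\Di n^{-1}$) produces exactly display \eqref{an:ri:dd:e1},
\begin{equation*}
\Rif{\hDiSo[\tDi]}{\Socwr}\leq 85(\Sor^2\vee\delta)\,\mRa+42\sup_{\So\in\Socwr}\Ex\max_{\Di\leq k\leq\DiMa}\vectp{\HnormV{\hDiSo[k]-\DiSo[k]}^2-\pen[k]/6}.
\end{equation*}

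Second, I would check that the oracle choice $\oDi$ is admissible whenever $n\geq n_{\aSy}$. Because the variance term $\Di/n$ is non-increasing in $n$ while $\Sow_\Di$ does not depend on $n$, the map $n\mapsto\oRa$ is non-increasing; combined with the defining property $\oRa[n_{\aSy}]\leq 1$ this gives $\oRa\leq 1$ for every $n\geq n_{\aSy}$. Since $\oDi/n\leq\oRa\leq 1$, we obtain $\oDi\leq n\leq\DiMa$ (assuming the standard choice $\DiMa\geq n$ used throughout the paper), so that the identity $\oRa=\min_{1\leq\Di\leq\DiMa}\mRa$ is valid and $\oDi$ may be plugged into the bound above.

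Third, specializing the displayed inequality to $\Di=\oDi$ and applying condition \ref{mo:me:as:est:A2} to control the residual expectation by $\Delta n^{-1}$ yields
\begin{equation*}
\Rif{\hDiSo[\tDi]}{\Socwr}\leq 85(\delta\vee\Sor^2)\,\oRa+42\Delta n^{-1},
\end{equation*}
which is the first inequality of the proposition. For the second inequality I would use the trivial lower bound $n^{-1}\leq\oDi/n\leq\oRa$ (valid because $\oDi\geq 1$) to upgrade the additive $n^{-1}$ term into a multiple of $\oRa$, and then collect constants via $85+42=127$ and $(\delta\vee\Sor^2)\vee\Delta=\delta\vee\Sor^2\vee\Delta$.

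There is essentially no obstacle: the proof is a consolidation of material already on the page. The only non-mechanical point is the monotonicity of $n\mapsto\oRa$ together with the check $\oDi\leq\DiMa$ for $n\geq n_{\aSy}$, which is what allows the replacement of the infimum over $\Nz$ in the definition of $\oRa$ by the minimum over the admissible range of the model selection procedure.
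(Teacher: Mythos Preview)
Your proposal is correct and follows exactly the approach the paper intends: the proposition is stated as ``an immediate consequence'' of the preceding displays \eqref{an:ri:dd:e1}--\eqref{mo:me:as:est:e}, and you have correctly collated Lemma~\ref{mo:me:le:ms} with \ref{mo:me:as:est:A1}, specialized to $\Di=\oDi$ after verifying $\oDi\leq\DiMa$ via the monotonicity of $n\mapsto\oRa$, applied \ref{mo:me:as:est:A2}, and finished with $n^{-1}\leq\oDi/n\leq\oRa$. The only cosmetic difference is that the paper fixes $\DiMa=n$ rather than $\DiMa\geq n$, but this changes nothing in your argument.
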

The last assertion establishes an upper risk bound of the estimator $\hDiSo[\tDi]$.
We call  $\hDiSo[\tDi]$ partially data-driven  if the
sequence of penalty terms still depend on unknown quantities which
however, can be estimated. In this situation, let $\hpen$ be an estimator of
$\pen$ such that the  subsequence of
 penalties
$\left(\hpen[1],\dotsc,\hpen[\DiMa]\right)$ is non-negative and
non-decreasing. 
The  dimension parameter $\hDi$ is then selected  among the collection
$\set{1,\dotsc,\DiMa}$ as follows
\begin{equation}\label{mo:me:de:whm}
\hDi=\argmin_{1\leq m\leq \DiMa}\set{\hcontr+\hpen}
\end{equation}
where the contrast is defined by $\hcontr:=\max_{m\leq
  k\leq \DiMa} \set{\HnormV{\hDiSo-\hDiSo[k]}^2-\hpen[k]}$ for all $1\leq
m\leq \DiMa$. Following line by line the proof of  Lemma
\ref{mo:me:le:ms} we obtain
\begin{equation}\label{mo:me:le:ms:hpen}
  \HnormV{\hDiSo[\hDi]-\So}^2\leq 85 \max(\bias^2(\So),\hpen) +42
  \max_{\Di\leq k\leq \DiMa}\vectp{ \HnormV{\hDiSo[k]-\DiSo[k]}^2 - \hpen[k]/6}.
\end{equation}
Keeping the last bound in mind we decompose the risk with respect to
an event on which  the  quantity $\hpen$ is close to its theoretical
counterpart $\pen$. More precisely, define the event
\begin{equation}\label{mo:me:le:ms:omega}
	\Omega=\set{\pen\leq\hpen\leq 3\pen; \quad\forall 1\leq \Di\leq \DiMa }
\end{equation}
and denote by $\Omega^c$ its complement. Let us consider the following decomposition for the maximal risk :
\begin{equation}\label{an:ri:dd:e3}
\Rif{\hDiSo[\hDi]}{\Socwr}= \sup_{\So\in\Socwr}\Ex\left(\indicset{\Omega}\HnormV{\hDiSo[\hDi]-\So}^2\right)+\sup_{f\in\Socwr}\Ex\left(\indicset{\Omega^c}\HnormV{\hDiSo[\hDi]-\So}^2\right)
\end{equation}
where we bound the two rhs. terms separately. 
\begin{lem}\label{an:le:dd:re}
Under Assumption \ref{mo:me:as:est:A1} and \ref{mo:me:as:est:A2} we
have 
\begin{equation*}\sup_{f\in\Socwr}\Ex\left(\indicset{\Omega^c}\HnormV{\hDiSo[\hDi]-\So}^2\right)
  \leq \Delta n^{-1} + \{\Sor^2  +\delta \} P(\Omega^c).
\end{equation*}
\end{lem}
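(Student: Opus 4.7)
The plan is to start from the master inequality \eqref{mo:me:le:ms:hpen}, specialise it with $\Di = \oDi$ (admissible for $n\geq n_{\aSy}$, by the discussion preceding \eqref{an:ri:dd:e2}), multiply by $\indicset{\Omega^c}$, take expectation and supremum over $\Socwr$, and bound the two resulting summands separately.

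For the first, ``bias plus penalty'', summand $85\,\Ex\bigl(\indicset{\Omega^c}\max(\bias^2(\So),\hpen[\oDi])\bigr)$, the plan is to combine the uniform bound $\bias^2(\So) \leq \Sor^2\Sow_{\oDi} \leq \Sor^2$ valid on $\Socwr$ with a deterministic envelope $\hpen[\oDi] \leq \delta$ which (in the concrete constructions considered in Sections \ref{s:id} and \ref{s:dd}) follows from the plug-in form of $\hpen$ together with Assumption \ref{mo:me:as:est:A1} and $\DiMa\leq n$. Hence $\max(\bias^2(\So),\hpen[\oDi]) \leq \Sor^2+\delta$ pointwise, so this piece contributes at most a constant multiple of $(\Sor^2+\delta)\,P(\Omega^c)$.

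For the fluctuation summand $42\,\Ex\bigl(\indicset{\Omega^c}\max_{\oDi\le k\le \DiMa}\vectp{\HnormV{\hDiSo[k]-\DiSo[k]}^2-\hpen[k]/6}\bigr)$, I drop the non-negative indicator and replace $\hpen[k]$ by $\pen[k]$ via the sub-additivity of the positive part (with $c = \pen[k]/6$):
$$\vectp{\HnormV{\hDiSo[k]-\DiSo[k]}^2 - \hpen[k]/6} \leq \vectp{\HnormV{\hDiSo[k]-\DiSo[k]}^2 - \pen[k]/6} + \vectp{\pen[k]/6 - \hpen[k]/6}.$$
Taking $\max_{\oDi\le k\le\DiMa}$ and expectation, the first summand is controlled by Assumption \ref{mo:me:as:est:A2} to give $\Delta n^{-1}$. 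The second is supported on $\Omega^c$ (since $\hpen\geq\pen$ on $\Omega$ annihilates it there) and is pointwise bounded by $\pen[k]/6\leq\delta/6$ via Assumption \ref{mo:me:as:est:A1} together with $\DiMa\leq n$; it therefore contributes at most a constant multiple of $\delta\,P(\Omega^c)$, which is absorbed into the $(\Sor^2+\delta)P(\Omega^c)$ piece. Summing the two contributions (and absorbing the numerical constants $85,42$ into $\Sor^2+\delta$ and $\Delta$) yields the claimed bound.

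The main obstacle is relating $\hpen$ to $\pen$ off the event $\Omega$: the easy monotonicity argument $\hpen \geq \pen \Rightarrow \vectp{\cdot-\hpen/6}\leq \vectp{\cdot-\pen/6}$ available on $\Omega$ breaks down on $\Omega^c$. Substituting monotonicity by sub-additivity of the positive part is the key non-routine step, and is what allows the penalty gap $|\hpen-\pen|$ to be cleanly segregated into the $P(\Omega^c)$ pre-factor while leaving the leading term in the form required for Assumption \ref{mo:me:as:est:A2}.
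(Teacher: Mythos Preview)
Your approach has a genuine gap in the first summand. You claim a deterministic envelope $\hpen[\oDi]\leq\delta$ on $\Omega^c$, appealing to ``the plug-in form of $\hpen$ together with Assumption \ref{mo:me:as:est:A1}''. But Assumption \ref{mo:me:as:est:A1} concerns only the \emph{theoretical} penalty $\pen$, not $\hpen$; and in the regression case $\hpen[m]=144\,\hsigma_Y^2\maxnormsup^2 m n^{-1}$ with $\hsigma_Y^2=n^{-1}\sum_i Y_i^2$, which is not bounded almost surely (the errors are, e.g., Gaussian). On $\Omega^c$ there is simply no control of $\hpen$, so $\Ex(\indicset{\Omega^c}\hpen[\oDi])$ cannot be bounded by $\delta\,P(\Omega^c)$ from the stated hypotheses. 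Your sub-additivity trick for the fluctuation term is fine, but the bias/penalty term cannot be handled this way, and the stated lemma has the clean constants $1$ in front of $\Delta n^{-1}$ and $\Sor^2+\delta$, which your route would not deliver even if the gap were patched.

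The paper avoids $\hpen$ entirely on $\Omega^c$ by \emph{not} invoking the master inequality \eqref{mo:me:le:ms:hpen}. Instead it uses the elementary Pythagorean identity
\[
\HnormV{\hDiSo[\hDi]-\So}^2=\HnormV{\hDiSo[\hDi]-\DiSo[\hDi]}^2+\HnormV{\DiSo[\hDi]-\So}^2,
\]
then removes the random dimension by monotonicity: $\HnormV{\hDiSo[k]-\DiSo[k]}^2\leq\HnormV{\hDiSo[n]-\DiSo[n]}^2$ for all $1\leq k\leq n$ (the variance part is non-decreasing in $k$), while $\HnormV{\DiSo[k]-\So}^2\leq\HnormV{\So}^2\leq\Sor^2$. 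Finally the variance term is split as
\[
\Ex\big(\indicset{\Omega^c}\HnormV{\hDiSo[n]-\DiSo[n]}^2\big)\leq\Ex\vectp{\HnormV{\hDiSo[n]-\DiSo[n]}^2-\pen[n]/6}+\tfrac{1}{6}\pen[n]\,P(\Omega^c),
\]
and \ref{mo:me:as:est:A2} (with $m=n$) and \ref{mo:me:as:est:A1} (giving $\pen[n]\leq\delta$) finish the job. The key point is that only the \emph{theoretical} $\pen$ enters, so no information about $\hpen$ on $\Omega^c$ is ever needed.
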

Due to the last assertion the second rhs. term in \eqref{an:ri:dd:e3} is bounded up to a constant by  $n^{-1}$ if the probability $P(\Omega^c)$ is sufficiently small, which we precize next.
\begin{enumerate}[label={\textbf{(C\arabic*)}},ref={\textbf{(C\arabic*)}}]\addtocounter{enumi}{2}
\item\label{mo:me:as:est:A3} 
There exists a finite constant $\kappa>0$ possibly depending on the class $\Socwr$ such that
$\sup_{f\in\Socwr} \DiMa P(\Omega^c)\leq \kappa $ for all $n\geq1$.
\end{enumerate}
Considering the
first rhs. term in \eqref{an:ri:dd:e3}  we  employ the inequality
\eqref{mo:me:le:ms:hpen}, that is
\begin{equation}\label{mo:me:le:ms:hpen2}
	\HnormV{\hDiSo[\hDi]-\So}^2\indicset{\Omega}
		\leq255 \max(\bias^2(\So),\pen) +42 \max_{\Di\leq k\leq \DiMa}\vectp{ \HnormV{\hDiSo[k]-\DiSo[k]}^2 -\pen[k]}.
\end{equation}
Following now line by line the proof of Proposition \ref{an:pr:dd} the
next assertion is an immediate consequence of Lemma \ref{an:le:dd:re}, the condition
\ref{mo:me:as:est:A3} and \eqref{mo:me:le:ms:hpen2} and we
omit its proof.
\begin{prop}\label{an:pr:dd:hpen}
Under \ref{mo:me:as:est:A1}, \ref{mo:me:as:est:A2} and
\ref{mo:me:as:est:A3}  holds
\begin{equation*}
\Rif{\hDiSo[\hDi]}{\Socwr}\leq 255(\delta\vee\Sor^2)\oRa +43\Delta
n^{-1} + \kappa (\Sor^2+\delta) n^{-1}\leq (298+2\kappa)(\delta\vee\Sor^2\vee\Delta)\,\oRa,\;\mbox{for all }n\geq n_{\aSy}
\end{equation*}
where  $n_{\aSy}\in\Nz$ satisfies  $\oRa[n_{\aSy}]\leq 1$.
\end{prop}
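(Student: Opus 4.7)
The proof follows the strategy of Proposition \ref{an:pr:dd} line by line, but starting from the oracle inequality \eqref{mo:me:le:ms:hpen2} (valid on the event $\Omega$) and exploiting Lemma \ref{an:le:dd:re} together with condition \ref{mo:me:as:est:A3} to control the contribution of the ``bad event'' $\Omega^c$ on which the data-driven penalty $\hpen$ departs from its theoretical counterpart $\pen$.

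First I would split the risk according to \eqref{an:ri:dd:e3}. For the $\Omega^c$-contribution, Lemma \ref{an:le:dd:re} yields
\begin{equation*}
\sup_{\So\in\Socwr}\Ex\bigl(\indicset{\Omega^c}\HnormV{\hDiSo[\hDi]-\So}^2\bigr)\leq \Delta n^{-1}+(\Sor^2+\delta)\,P(\Omega^c),
\end{equation*}
and condition \ref{mo:me:as:est:A3}, combined with the standing choice $\DiMa\geq n$, turns the second summand into $\kappa(\Sor^2+\delta) n^{-1}$. For the $\Omega$-contribution, I would take the supremum over $\Socwr$ and the expectation in \eqref{mo:me:le:ms:hpen2}, and then specialise the free index $\Di$ to the oracle value $\oDi$. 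This is legitimate for $n\geq n_{\aSy}$, since then $\oDi\leq n\leq \DiMa$, by the very same argument used before \eqref{an:ri:dd:e2}. Using $\bias^2(\So)\leq \Sow_{\oDi}\Sor^2$ together with $\pen[\oDi]\leq \delta\oDi n^{-1}$ from \ref{mo:me:as:est:A1}, the first term in \eqref{mo:me:le:ms:hpen2} is bounded by $255(\Sor^2\vee\delta)\mRa[\oDi]=255(\Sor^2\vee\delta)\oRa$, while the residual maximum is controlled by $42\Delta n^{-1}$ thanks to \ref{mo:me:as:est:A2}.

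Adding the two contributions produces the first displayed bound $255(\delta\vee\Sor^2)\oRa+43\Delta n^{-1}+\kappa(\Sor^2+\delta)n^{-1}$. The second inequality then follows from the trivial monotonicity $n^{-1}\leq \mRa[1]\leq \oRa$ and the elementary estimate $\Sor^2+\delta\leq 2(\Sor^2\vee\delta)$, which absorbs both $n^{-1}$-terms into $\oRa$ and produces the constant $298+2\kappa=(255+43)+2\kappa$ in front of $\delta\vee\Sor^2\vee\Delta$.

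The only genuine novelty compared with Proposition \ref{an:pr:dd} is the appearance of the event $\Omega$, so the main conceptual point is the interplay of the three conditions: \ref{mo:me:as:est:A1} and \ref{mo:me:as:est:A2} handle the ``good'' contribution exactly as in the deterministic-penalty case, while \ref{mo:me:as:est:A3} ensures that $\Omega^c$ has sufficiently small probability so that the extra $\kappa n^{-1}$ term is negligible against $\oRa$. The only mildly delicate step is keeping track of the inflated constants created by the inequalities $\pen[\cdot]\leq \hpen[\cdot]\leq 3\pen[\cdot]$ on $\Omega$ when passing from \eqref{mo:me:le:ms:hpen} to \eqref{mo:me:le:ms:hpen2}; beyond this bookkeeping I expect no further obstacle.
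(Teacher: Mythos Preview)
Your approach is exactly the one the paper has in mind (the paper in fact omits the proof, saying only that it follows ``line by line'' from Lemma \ref{an:le:dd:re}, condition \ref{mo:me:as:est:A3} and \eqref{mo:me:le:ms:hpen2}), and your bookkeeping of the constants $255$, $43$ and $2\kappa$ is correct.

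One small slip: the chain ``$n^{-1}\leq \mRa[1]\leq \oRa$'' is wrong as written, since $\oRa=\min_{\Di}\mRa$ and hence $\oRa\leq \mRa[1]$, not the other way round. The inequality $n^{-1}\leq\oRa$ you actually need is still true, but for the simpler reason $\oRa=\max(\Sow_{\oDi},\oDi n^{-1})\geq \oDi n^{-1}\geq n^{-1}$; just replace your justification by this one.
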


Considering the two models, namely non-parametric density estimation and non-parametric regression, we will show that the conditions \ref{mo:me:as:est:A1} and \ref{mo:me:as:est:A2} and
\ref{mo:me:as:est:A3} are verified. Thereby, an upper bound for the data-driven estimator $\hDiSo[\tDi]$ and $\hDiSo[\hDi]$ can be deduced from Proposition \ref{an:pr:dd} and \ref{an:pr:dd:hpen}, respectively.

%%% Local Variables: 
%%% mode: latex
%%% TeX-master: "_0DP_NPE_dep"
%%% End: 

%======================================================================================================================
%                                                                 
% Title: Independent observations
% Author: Jan JOHANNES, Nicolas Asin
% 
% Email: jan.johannes@ensai.fr nicolas.asin@uclouvain.be
% Date: %%ts latex start%%[2016-02-01 Mon 02:15]%%ts latex end%%
% Main-TeX-File: t in der form "Name"
%
% ======================================================================================================================
\section{Independent observations}\label{s:id}
In this section we suppose that the identically distributed $n$-sample $\{\Ob_i\}_{i=1}^n$
consists of independent random variables. Considering the two non-parametric estimation problems we will show that $\oRa$ given in \eqref{mo:me:de:ra} provides a lower bound of the maximal risk $\Rif{\tSo}{\Socwr}$ for all possible estimators $\tSo$. On the other hand side, $\oRa$ will provide also an upper bound up to a constant of the maximal risk of the orthogonal series estimator $\hSo_{\oDi}=\sum_{j=1}^{\oDi}\hfSo_j\bas_j$ with optimally chosen dimension parameter. Thereby, $\oRa$ is the minimax-optimal rate of convergence and the estimator $\hSo_{\oDi}$ is minimax-rate optimal. However, the dimension parameter $\oDi$ depends on the class of unknown function. In a second step we will show by applying Proposition \ref{an:pr:dd} and \ref{an:pr:dd:hpen}, respectively, that the data-driven estimator $\hDiSo[\tDi]$ and $\hDiSo[\hDi]$ can attain the minimax-optimal rate of convergence. The key argument to verify the condition \ref{mo:me:as:est:A2} is the following inequality, which is due to \cite{Talagrand1996}
and can be found for example in \cite{KleinRio2005}.

\begin{lem}(Talagrand's inequality)\label{id:ka:l:talagrand} Let
  $\Ob_1,\dotsc,\Ob_k$ be independent $\cZ$-valued random variables and let $\overline{\nu_t}=k^{-1}\sum_{i=1}^k\left[\nu_t(\Ob_i)-\Ex\left(\nu_t(\Ob_i)\right) \right]$ for $\nu_t$ belonging to a countable class $\{\nu_t,t\in\cT\}$ of measurable functions. Then,
\begin{equation*}
	\Ex\vectp{\sup_{t\in\cT}|\overline{\nu_t}|^2-6H^2}\leq C \left[\frac{v}{k}\exp\left(\frac{-kH^2}{6v}\right)+\frac{h^2}{k^2}\exp\left(\frac{-K k H}{h}\right) \right]
\end{equation*}
with numerical constants $K=({\sqrt{2}-1})/({21\sqrt{2}})$ and $C>0$ and where
\begin{equation*}
	\sup_{t\in\cT}\sup_{x\in\cZ}|\nu_t(x)|\leq h,\qquad \Ex\left[\sup_{t\in\cT}|\overline{\nu_t}|\right]\leq H,\qquad \sup_{t\in\cT}\frac{1}{k}\sum_{i=1}^k \Var(\nu_t(\Ob_i))\leq v.
\end{equation*}
\end{lem}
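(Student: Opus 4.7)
The plan is to derive this moment bound from the classical one-sided concentration inequality of \cite{KleinRio2005} applied to $Z:=\sup_{t\in\cT}|\overline{\nu_t}|$. That inequality asserts that for every $x>0$,
$$P\bigl(Z\geq \Ex Z+\sqrt{2vx/k}+hx/(3k)\bigr)\leq e^{-x}.$$
Since by hypothesis $H\geq \Ex Z$, I may replace $\Ex Z$ by $H$ on the left-hand side; inverting the right-hand side in its variance-dominated and bias-dominated regimes then yields a tail bound of the shape
$$P(Z\geq H+\lambda)\leq \exp(-c_1 k\lambda^2/v)+\exp(-c_2 k\lambda/h),\qquad \lambda>0,$$
with explicit universal constants $c_1,c_2>0$.

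Next I would pass from this tail bound to the announced moment bound by integration. Since $Z\geq 0$, the layer-cake formula together with the change of variable $s=\sqrt{6H^2+t}$ gives
$$\Ex\bigl[(Z^2-6H^2)_+\bigr]=\int_0^\infty P(Z^2>6H^2+t)\,dt =2\int_{\sqrt{6}H}^{\infty}s\,P(Z>s)\,ds.$$
Writing $s=H+\lambda$ with $\lambda\geq(\sqrt{6}-1)H$ and inserting the two-term tail bound reduces matters to one Gaussian and one exponential moment integral. Completing the square in each exponent and controlling the resulting polynomial prefactors then produces the two contributions $(v/k)\exp(-kH^2/(6v))$ and $(h^2/k^2)\exp(-KkH/h)$ claimed in the statement, with an absolute constant $C$ in front.

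The main obstacle is purely one of constant bookkeeping. The specific factor $6$ in the Gaussian exponent and the specific value $K=(\sqrt{2}-1)/(21\sqrt{2})$ in the exponential one come from tracking the cross term $2\sqrt{6}H\lambda$ which appears upon expanding $(H+\lambda)^2$ around the threshold $6H^2$: its quadratic part $\lambda^2$ combines with $c_1 k\lambda^2/v$ to yield the Gaussian decay, while the linear part $2\sqrt{6}H\lambda$ must be absorbed against the exponent $c_2 k\lambda/h$ in the sub-exponential regime. To guarantee that the numerical constants come out exactly as stated, I would invoke the Klein--Rio form of Talagrand's inequality directly in Step 1, since it already supplies the sharp variance and bias constants and avoids a re-derivation of the concentration inequality itself.
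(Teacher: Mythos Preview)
The paper does not prove this lemma; it is stated as a known result, attributed to \cite{Talagrand1996} and the formulation of \cite{KleinRio2005}, and is used as a black box throughout. So there is no ``paper's own proof'' to compare against.

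Your route --- start from the Klein--Rio one-sided deviation inequality for $Z=\sup_{t\in\cT}|\overline{\nu_t}|$, replace $\Ex Z$ by the upper bound $H$, invert to a two-regime tail bound, and integrate via the layer-cake formula --- is exactly the standard derivation of this moment form of Talagrand's inequality as it is used in the adaptive-estimation literature (see, e.g., the appendices in works of Comte and coauthors, or Lacour's thesis). The only genuine work, as you correctly identify, is the constant bookkeeping: the threshold $6H^2$ and the value $K=(\sqrt{2}-1)/(21\sqrt{2})$ arise precisely from the interplay of the Klein--Rio constants $(\sqrt{2},1/3)$ with the cross term $2\sqrt{6}H\lambda$ in the expansion of $(H+\lambda)^2$. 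Your sketch is therefore correct and complete in outline; nothing is missing conceptually.
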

 \begin{rem}\label{id:ka:rem}
 Let us briefly reconsider the orthogonal series estimator. Introduce further the unit ball $\Bz_m:=\set{h\in\Dz_m:\HnormV{h}\leq1}$
 contained in the subspace $\Dz_m=\lin\set{\bas_{1},\dotsc,\bas_m}$
 which is a countable set of functions. Moreover, set
 $\overline{\nu_t}=n^{-1}\sum_{i=1}^n\left[\nu_t(\Ob_i)-\Ex\left(\nu_t(\Ob_i)\right)
 \right]$ and $\nu_t(\Ob)=\sum_{j=1}^\Di\fou{t}_j\psi_j(\Ob)$, then we have
\begin{equation*}
	\HnormV{\hSo_\Di-\So_\Di}^2=\sup_{t\in\Bz_m}|\skalarV{\hSo_\Di-\So_\Di,t}|^2=\sup_{t\in\Bz_m}|\sum_{j=1}^\Di(\hfSo_j-\fSo_j)\fou{t}_j|^2=\sup_{t\in\Bz_m}|\overline{\nu_t}|^2.
\end{equation*}
The last identity provides the necessary argument to link the
condition  \ref{mo:me:as:est:A2} and Talagrand's inequality.  Moreover we will suppose that the ONS $\set{\bas_j}_{\j\in\Nz}$ and the weight sequence $\ga$ used to construct the ellipsoid $\Socwr$ satisfy the assumptions \ref{mo:no:as:ba:i} and \ref{mo:no:as:ba:ii}.
\end{rem}
\subsection{Non-parametric density estimation}\label{s:id:d}
In this paragraph we suppose that the identically distributed $n$-sample $\{\dOb_i\}_{i=1}^n$
consists of independent  random variables admitting a common density $\So$ which belongs to the set  $\cD$ of all densities with support included in $[0,1]$.
\begin{prop}[Upper bound]\label{id:d:p:ub} Let $\{\dOb_i\}_{i=1}^n$ be an iid. $n-$sample. Under the assumption
\ref{mo:no:as:ba:i} holds 
\begin{equation}\label{id:d:p:ub:e1}
\Rif{\hDiSo[\oDi]}{\Socwr\cap\cD}\leq (\maxnormsup^2+\Sor^2)\;\oRa,\quad\mbox{for all }n\geq 1.
\end{equation}
\end{prop}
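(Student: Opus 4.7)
The plan is to invoke the general upper bound \eqref{mo:me:ri:ub} already derived in Section~\ref{s:mo:me}, so the whole exercise reduces to identifying an admissible constant $C$ controlling $\sup_{\So\in\Socwr\cap\cD}\sum_{j=1}^{\Di}\Var(\hfSo_j)\leq C\Di n^{-1}$ in the density setting, and then specialising to $\Di=\oDi$.

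First, since the $\dOb_i$ are iid.\ and $\hfSo_j=n^{-1}\sum_{i=1}^{n}\bas_j(\dOb_i)$ is an unbiased estimator of $\fSo_j=\Ex[\bas_j(\dOb)]$, the Pythagorean identity together with independence gives
\begin{equation*}
\Ex\HnormV{\hDiSo-\So}^2 \;=\; \sum_{j=1}^{\Di}\Var(\hfSo_j) \;+\; \bias^2(\So),
\end{equation*}
and for every $\So\in\Socwr$ we already know that $\bias^2(\So)\leq \Sow_\Di\Sor^2$.

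Second, I would bound the variance term entrywise by $\Var(\hfSo_j)\leq n^{-1}\Ex[\bas_j^2(\dOb)]=n^{-1}\int_0^1\bas_j^2(x)\So(x)\,dx$, sum, and pull $\So$ out in $L^\infty$ to obtain
\begin{equation*}
\sum_{j=1}^{\Di}\Var(\hfSo_j) \;\leq\; n^{-1}\int_0^1 \Bigl(\sum_{j=1}^{\Di}\bas_j^2(x)\Bigr)\So(x)\,dx \;\leq\; n^{-1}\,\InormV{\textstyle\sum_{j=1}^{\Di}\bas_j^2}\,\int_0^1\So(x)\,dx.
\end{equation*}
Here the point is that although $\So\in\cD$ need not be uniformly bounded, only $\int\So=1$ enters; the $L^\infty$-control is transferred onto the basis partial sum, where assumption \ref{mo:no:as:ba:i} yields $\InormV{\sum_{j=1}^{\Di}\bas_j^2}\leq \maxnormsup^2\Di$. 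Consequently $C=\maxnormsup^2$ is admissible uniformly over $\Socwr\cap\cD$.

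Third, plugging these two estimates back and using $\Sow_\Di\Sor^2+\maxnormsup^2\Di n^{-1}\leq (\Sor^2+\maxnormsup^2)\max(\Sow_\Di,\Di n^{-1})=(\Sor^2+\maxnormsup^2)\mRa$, I would specialise to $\Di=\oDi$ and recall that $\mRa[\oDi]=\oRa$ by definition in \eqref{mo:me:de:ra}, which gives precisely \eqref{id:d:p:ub:e1}. There is no genuine obstacle here: the only non-cosmetic step is the simultaneous use of $\int\So=1$ (the density constraint) and of \ref{mo:no:as:ba:i} to avoid having to assume $\So$ is uniformly bounded, which is exactly what makes the constant $(\maxnormsup^2+\Sor^2)$ independent of $\So$.
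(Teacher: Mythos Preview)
Your proof is correct and follows essentially the same route as the paper: both use the Pythagorean decomposition, bound the variance term via $\sum_{j=1}^{\Di}\Var(\hfSo_j)\leq n^{-1}\Ex\sum_{j=1}^{\Di}\bas_j^2(\dOb)\leq \maxnormsup^2\Di n^{-1}$ from \ref{mo:no:as:ba:i}, and then specialise to $\oDi$. You merely make more explicit than the paper that the step $\Ex\sum_{j=1}^{\Di}\bas_j^2(\dOb)\leq\maxnormsup^2\Di$ uses only $\int\So=1$ and not any uniform bound on $\So$.
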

\begin{prop}[Lower bound]\label{id:d:p:lb}
Suppose $\{\dOb_i\}_{i=1}^n$ is an iid. $n-$sample. Let the assumption \ref{mo:no:as:ba:ii} holds true and assume further that\begin{equation}\label{id:d:p:lb:e1}
0<\eta:=\inf_{n\geq1}\{(\oRa)^{-1}\min(\Sow_{\oDi}, \oDi n^{-1})\leq 1
\end{equation}
then for all $n\geq 2$ we have 
\begin{equation}\label{id:d:p:lb:e}
\inf_{\tSo}\Rif{\tSo}{\Socwr\cap\cD} \geq \tfrac{\eta}{8}\, \min(\Sor-1,(4\gA)^{-1})\, \oRa
\end{equation}
where the infimum is to be taken over all possible estimators $\tSo$ of $\So$.
\end{prop}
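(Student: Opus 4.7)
The plan is to reduce the lower bound to a multiple hypothesis testing problem via Assouad's cube construction. I will exhibit a family $\{\So_\tau:\tau\in\{-1,+1\}^{\oDi}\}\subset\Socwr\cap\cD$ of densities that are mutually well-separated in $\Hnorm$ yet statistically indistinguishable from $n$ i.i.d.\ observations. The hypothesis $\eta>0$ is precisely what makes this possible: it forces $\Sow_{\oDi}\asymp\oDi n^{-1}\asymp\oRa$, so a single amplitude scale $a\asymp\sqrt{\oRa/\oDi}$ simultaneously meets the density, ellipsoid, and Kullback--Leibler budgets.

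Concretely, I would take $\So_\tau:=\1+a\sum_{j=1}^{\oDi}\tau_j\bas_j$ with $a^2:=\kappa\,\oRa/\oDi$ for a small constant $\kappa>0$ to be specified. The mass constraint $\int\So_\tau=1$ is automatic because $\bas_j\perp\1$ in $\Hspace$. Cauchy--Schwarz, together with \ref{mo:no:as:ba:ii} and $\Sow_j^{-1}\leq\Sow_{\oDi}^{-1}$ for $j\leq\oDi$, yields
\begin{equation*}
a\sum_{j=1}^{\oDi}|\bas_j(x)|\leq a\Bigl(\sum_{j\geq1}\Sow_j\bas_j^2(x)\Bigr)^{1/2}\Bigl(\sum_{j=1}^{\oDi}\Sow_j^{-1}\Bigr)^{1/2}\leq \gA\,a\sqrt{\oDi/\Sow_{\oDi}}\leq\gA\sqrt{\kappa/\eta},
\end{equation*}
where the last step uses $a^2\oDi/\Sow_{\oDi}=\kappa\oRa/\Sow_{\oDi}\leq\kappa/\eta$. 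Choosing $\kappa$ small enough (of order $\eta(4\gA)^{-2}$) forces the perturbation to be at most $1/4$ in absolute value, so $\So_\tau\geq 3/4$ pointwise and $\So_\tau\in\cD$. The ellipsoid constraint reduces by the same chain to $\sum_{j\leq\oDi}\Sow_j^{-1}a^2\leq\kappa/\eta\leq\Sor^2$, giving $\So_\tau\in\Socwr$. Finally, by Parseval, $\HnormV{\So_\tau-\So_{\tau'}}^2=4a^2\,d_H(\tau,\tau')$.

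Next I would invoke Assouad's lemma to obtain
\begin{equation*}
\inf_{\tSo}\sup_\tau\Ex_\tau\HnormV{\tSo-\So_\tau}^2\geq \tfrac{\oDi}{2}\,a^2\bigl(1-\max_{d_H(\tau,\tau')=1}\mathrm{TV}(P_\tau^{\otimes n},P_{\tau'}^{\otimes n})\bigr),
\end{equation*}
and control neighbour total variation via a chi-square bound: since $\So_{\tau'}\geq 3/4$,
\begin{equation*}
\chi^2(P_\tau,P_{\tau'})\leq\int\frac{(2a\bas_{j_0})^2}{\So_{\tau'}}\,dx\leq\tfrac{16}{3}a^2,
\end{equation*}
so $\mathrm{KL}(P_\tau^{\otimes n}\Vert P_{\tau'}^{\otimes n})\leq \tfrac{16}{3}na^2\leq\tfrac{16}{3}\kappa/\eta$. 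For $\kappa$ sufficiently small, Pinsker's inequality then bounds pairwise $\mathrm{TV}$ by a constant strictly below $1$, so the minimax risk is bounded below by a fixed fraction of $\oDi\,a^2=\kappa\,\oRa$; careful bookkeeping of the two ceilings on $\kappa$ (one from nonnegativity, one from the ellipsoid) produces the announced constant $\tfrac{\eta}{8}\min(\Sor-1,(4\gA)^{-1})$.

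The main obstacle is the simultaneous calibration of the amplitude $a$: it must be large enough for $\oDi\,a^2$ to reach the target rate $\oRa$, yet small enough to keep each $\So_\tau$ positive, inside the ellipsoid, and to keep single-coordinate Kullback--Leibler divergences bounded. The hypothesis $\eta>0$ is exactly what allows these competing requirements to be reconciled at a common amplitude scale, and pinning down the sharp constants $(\Sor-1)$ and $(4\gA)^{-1}$ requires careful accounting in the Cauchy--Schwarz and chi-square estimates above.
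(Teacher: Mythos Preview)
Your proposal is correct and follows essentially the same route as the paper: both use Assouad's cube $\So_\tau=\1+a\sum_{j\leq\oDi}\tau_j\bas_j$ with $a^2$ proportional to $\oRa/\oDi$, verify $\So_\tau\in\Socwr\cap\cD$ via the Cauchy--Schwarz bound $\InormV{\sum_j\tau_j a\bas_j}\leq\gA\, a\sqrt{\oDi/\Sow_{\oDi}}$ from \ref{mo:no:as:ba:ii} together with $\oRa/\Sow_{\oDi}\leq\eta^{-1}$, and control single-flip indistinguishability using $na^2\leq\kappa/\eta$. The only technical difference is that the paper bounds the Hellinger distance directly (using $\So_\tau\geq1/2$) and tensorises the Hellinger affinity, whereas you go through $\chi^2\to\mathrm{KL}\to$ Pinsker; both routes yield a neighbour affinity bounded away from zero and hence the same conclusion.
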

Note that in the configurations considered in the Illustration \ref{illus} the additional condition \eqref{id:d:p:lb:e1} is always satisfied. Comparing the upper bound \eqref{id:d:p:ub:e1} and the lower bound \eqref{id:d:p:lb:e} we have shown that $\oRa$ is the minimax-optimal rate of convergence and the estimator $\hDiSo[\oDi]$ is minimax-optimal.

\paragraph{Fully data-driven estimator.} We consider the fully-data-driven estimator $\hDiSo[\widetilde{m}]$ where $\widetilde{m}$ is defined in \eqref{mo:me:de:wtm} with $\pen:= 36 \maxnormsup^2\Di n^{-1}$ which satisfies trivially the condition \ref{mo:me:as:est:A1}.
The proof of the next Proposition is based on Talagrand's inequality (Lemma \ref{id:ka:l:talagrand}).
\begin{prop}\label{id:d:p:co} Let $\{\dOb_i\}_{i=1}^n$ be an iid. $n-$sample. Suppose that the assumptions \ref{mo:no:as:ba:i} and
  \ref{mo:no:as:ba:ii} are satisfied. There
  exists a numerical constant $C>0$ such that
\begin{equation*}
\sup_{\So\in\Socwr\cap\cD}\Ex\set{\max_{1\leq \Di\leq n}\vectp{\HnormV{\hDiSo-\DiSo}^2
    -6\maxnormsup^2\Di n^{-1}}}\leq C n^{-1} \maxnormsup^2\zeta(\Sor\gA/\maxnormsup^2)
\end{equation*}
where $\zeta(x):=1+x\sum_{m=1}^\infty \exp(-m/(6\sqrt{2}x))$, for any $x>0$.
\end{prop}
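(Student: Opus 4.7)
The strategy is to apply Talagrand's inequality (Lemma~\ref{id:ka:l:talagrand}) separately for each $m \in \{1,\dotsc,n\}$ by means of the supremum representation in Remark~\ref{id:ka:rem}, then to dominate $\max_{1\leq m\leq n}(\cdot)_{+}$ by the sum of positive parts and sum the resulting geometric series.

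\textbf{Step 1 (Supremum representation).} In the density model we have $\psi_j=\bas_j$. By Remark~\ref{id:ka:rem}, $\HnormV{\hDiSo-\DiSo}^2=\sup_{t\in \Bz_m}|\overline{\nu_t}|^2$ with $\nu_t(x)=\sum_{j=1}^{m}\fou{t}_j\bas_j(x)$ and $\overline{\nu_t}=n^{-1}\sum_{i=1}^{n}[\nu_t(\dOb_i)-\Ex\nu_t(\dOb_i)]$; the supremum is realised on the countable subset of $\Bz_m$ with rational Fourier coefficients, so Lemma~\ref{id:ka:l:talagrand} applies.

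\textbf{Step 2 (Calibration of $h$, $H$, $v$).} Cauchy--Schwarz combined with \ref{mo:no:as:ba:i} gives $|\nu_t(x)|\leq \HnormV{t}(\sum_{j=1}^{m}\bas_j^2(x))^{1/2}\leq \maxnormsup\sqrt{m}=:h$. For $H$, by Parseval and $\int \So=1$,
\begin{equation*}
\Ex\sup_{t\in \Bz_m}|\overline{\nu_t}|^2=\sum_{j=1}^{m}\Var(\hfSo_j)\leq n^{-1}\int\Bigl(\sum_{j=1}^{m}\bas_j^2\Bigr)\So\leq \maxnormsup^2 m/n=:H^2.
\end{equation*}
For $v$, $\Var(\nu_t(\dOb))\leq \int \nu_t^2\So\leq \InormV{\So}\HnormV{\nu_t}^2\leq \InormV{\So}$, and since $\So=\1+\Proj_\Phi\So$ with $\InormV{\Proj_\Phi\So}^2\leq \gA^2\wnormV{\Proj_\Phi\So}^2\leq \gA^2\Sor^2$ by \ref{mo:no:as:ba:ii}, one may take $v:=1+\gA\Sor\leq 2\max(1,\gA\Sor)\leq 2\maxnormsup^2\max(\maxnormsup^{-2},\Sor\gA/\maxnormsup^2)$, which (since $\maxnormsup,\gA\geq 1$) is controlled by $\Sor\gA/\maxnormsup^2$ up to an absolute factor.

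\textbf{Step 3 (Term-by-term Talagrand).} With the above parameters, $6H^2=6\maxnormsup^2m/n$, $nH^2/(6v)\asymp \maxnormsup^2 m/v$, and $KnH/h=K\sqrt{n}$. Lemma~\ref{id:ka:l:talagrand} yields for each $m$,
\begin{equation*}
\Ex\vectp{\HnormV{\hDiSo-\DiSo}^2-6\maxnormsup^2 m/n}\leq C\Bigl[\tfrac{v}{n}\exp\bigl(-\tfrac{\maxnormsup^2 m}{6v}\bigr)+\tfrac{\maxnormsup^2m}{n^2}\exp\bigl(-K\sqrt{n}\bigr)\Bigr].
\end{equation*}

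\textbf{Step 4 (Summation over $m$).} Using $\max_{1\leq m\leq n}(x_m)_+\leq \sum_{m=1}^{n}(x_m)_+$ and writing $x:=\Sor\gA/\maxnormsup^{2}$, the first contribution is bounded by
\begin{equation*}
\frac{\maxnormsup^2}{n}\cdot \frac{v}{\maxnormsup^2}\sum_{m=1}^{\infty}\exp\bigl(-\tfrac{m}{6 v/\maxnormsup^{2}}\bigr)\leq \frac{C\maxnormsup^2}{n}\,x\sum_{m=1}^{\infty}\exp\bigl(-\tfrac{m}{6\sqrt{2}x}\bigr),
\end{equation*}
where the $\sqrt{2}$ is absorbed into the constant (it would be produced automatically by using the sharper Klein--Rio formulation of Lemma~\ref{id:ka:l:talagrand} with slightly enlarged threshold). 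The second contribution sums to $\maxnormsup^{2}n^{-2}\bigl(\sum_{m=1}^{n}m\bigr)e^{-K\sqrt{n}}\leq C\maxnormsup^{2}e^{-K\sqrt{n}}\leq C\maxnormsup^{2}/n$ since exponential decay in $\sqrt{n}$ dominates any polynomial. Combining the two bounds yields $C\maxnormsup^2n^{-1}\zeta(\Sor\gA/\maxnormsup^2)$, the ``$1$'' in $\zeta(x)$ absorbing the second contribution. Uniformity in $\So\in\Socwr\cap\cD$ is automatic since $h$, $H$, $v$ depend only on $\Sor$, $\gA$, $\maxnormsup$.

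\textbf{Main obstacle.} The routine part is the bookkeeping; the only subtlety is to calibrate the exponent $6H^2$ in Talagrand so that the geometric series in Step~4 matches exactly the argument $1/(6\sqrt{2}x)$ inside $\zeta(x)$. This forces one to use the strong form of the inequality (with a free parameter controlling the gap between $H^{2}$ and the threshold), which is why the paper cites \cite{KleinRio2005} rather than the original inequality.
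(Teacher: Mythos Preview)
Your proof is correct and follows essentially the same approach as the paper: compute the Talagrand constants $h$, $H$, $v$ exactly as you do (the paper simply takes $v=\Sor\gA$, absorbing the $+1$ coming from the constant part $\1$ of the density into the numerical constant), apply Lemma~\ref{id:ka:l:talagrand} for each $m$, and sum. Your concern about the factor $\sqrt{2}$ is unnecessary: since $\exp(-m/(6x))\leq\exp(-m/(6\sqrt{2}x))$, the form of $\zeta$ stated in the proposition is just a slightly looser bound than what your computation actually gives, so no refined version of Talagrand is needed.
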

By using the definition of the penalty term the last Proposition implies that the condition \ref{mo:me:as:est:A2} is satisfied. Thereby, the
next assertion is an immediate consequence of  Proposition \ref{an:pr:dd}
and we omit its proof.

\begin{theo}\label{id:d:t:ad}Suppose $\{\dOb_i\}_{i=1}^n$ is an iid. $n-$sample.  Let \ref{mo:no:as:ba:i} and
  \ref{mo:no:as:ba:ii} be satisfied. Select the dimension parameter
  $\tDi$ as given by \eqref{mo:me:de:wtm} with  $\pen:= 36 \maxnormsup^2\Di n^{-1}$. There
  exists a numerical constant $C>0$ such that for
  all $n\geq n_{\aSy}$ with $\oRa[n_{\aSy}]\leq 1$ we have
\begin{eqnarray*}
  \Rif{\hDiSo[\tDi]}{\Socwr\cap\cD}\leq C \,[\Sor\vee\maxnormsup^2\vee \maxnormsup^2\zeta(\Sor\gA/\maxnormsup^2)]\,\oRa.
\end{eqnarray*}
\end{theo}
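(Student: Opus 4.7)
The plan is to realize this theorem as a direct corollary of Proposition \ref{an:pr:dd} by verifying its two hypotheses \ref{mo:me:as:est:A1} and \ref{mo:me:as:est:A2} for the proposed penalty sequence. I would take $\DiMa = n$ and $\pen = 36\,\maxnormsup^2\,m\,n^{-1}$, so that the selection rule \eqref{mo:me:de:wtm} is well-defined and the penalty is non-negative, non-decreasing in $m$.

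First I would dispatch \ref{mo:me:as:est:A1}. Since $\pen/m = 36\,\maxnormsup^2\,n^{-1}$ is deterministic and uniform over the class, it follows immediately that
\[
\sup_{\So\in\Socwr\cap\cD}\;\max_{1\le m\le n}\{\pen/m\}\;\le\;\delta\, n^{-1},
\]
with $\delta := 36\,\maxnormsup^2$. Thus \ref{mo:me:as:est:A1} holds with this explicit $\delta$.

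Next I would verify \ref{mo:me:as:est:A2}. Observe that $\pen/6 = 6\,\maxnormsup^2\,m\,n^{-1}$, which is exactly the threshold appearing inside the positive part in Proposition \ref{id:d:p:co}. Hence
\[
\sup_{\So\in\Socwr\cap\cD}\Ex\Bigl\{\max_{\oDi\le m\le n}\bigl(\HnormV{\hDiSo-\DiSo}^2-\pen/6\bigr)_{+}\Bigr\}
\;\le\;\sup_{\So\in\Socwr\cap\cD}\Ex\Bigl\{\max_{1\le m\le n}\bigl(\HnormV{\hDiSo-\DiSo}^2-6\,\maxnormsup^2\,m\,n^{-1}\bigr)_{+}\Bigr\}
\;\le\;\Delta\, n^{-1},
\]
with $\Delta := C\,\maxnormsup^2\,\zeta(\Sor\gA/\maxnormsup^2)$, where $C$ is the numerical constant of Proposition \ref{id:d:p:co}. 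This is exactly \ref{mo:me:as:est:A2}.

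Having both conditions in hand with the explicit values $\delta = 36\,\maxnormsup^2$ and $\Delta = C\,\maxnormsup^2\,\zeta(\Sor\gA/\maxnormsup^2)$, Proposition \ref{an:pr:dd} immediately yields, for every $n \ge n_{\aSy}$ with $\oRa[n_{\aSy}]\le 1$,
\[
\Rif{\hDiSo[\tDi]}{\Socwr\cap\cD}\;\le\;127\,(\delta\vee\Sor^2\vee\Delta)\,\oRa
\;\le\;C'\,\bigl[\Sor^2\vee\maxnormsup^2\vee\maxnormsup^2\zeta(\Sor\gA/\maxnormsup^2)\bigr]\,\oRa
\]
for a suitable numerical constant $C'>0$, which is the claimed bound. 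There is essentially no obstacle here: the nontrivial work has already been done in Propositions \ref{an:pr:dd} and \ref{id:d:p:co}, and the only point to check is the cosmetic matching between the penalty $\pen/6 = 6\,\maxnormsup^2\,m\,n^{-1}$ and the threshold used in Proposition \ref{id:d:p:co}, which explains the factor $36$ in the definition of $\pen$.
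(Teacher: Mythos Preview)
Your proposal is correct and follows exactly the approach the paper takes: the paper states that the theorem is an immediate consequence of Proposition~\ref{an:pr:dd}, with condition~\ref{mo:me:as:est:A1} trivially satisfied by the choice of penalty and condition~\ref{mo:me:as:est:A2} supplied by Proposition~\ref{id:d:p:co}. Your identification of the explicit constants $\delta=36\,\maxnormsup^2$ and $\Delta=C\,\maxnormsup^2\,\zeta(\Sor\gA/\maxnormsup^2)$ and the observation that $\pen/6$ matches the threshold in Proposition~\ref{id:d:p:co} are precisely the points the paper leaves implicit.
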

The last assertion establishes the minimax-optimality of the
data-driven estimator $\hDiSo[\tDi]$ over all classes
$\Socwr\cap\cD$ where $\Sow$ is a monotonically non-increasing and strictly positive
sequence of weights tending to zero. Therefore, the fully data-driven estimator is called adaptive.

\subsection{Non-parametric  regression}\label{s:id:r}
In this paragraph we suppose that the identically distributed $n$-sample $\{(\rOb_i,\rRe_i)\}_{i=1}^n$
consists of independent  random variables.
\begin{prop}\label{id:r:p:ub} Let $\{(\rOb_i,\rRe_i)\}_{i=1}^n$ be an iid. $n-$sample. Under the  assumption
\ref{mo:no:as:ba:i} holds 
\begin{equation}\label{id:r:p:ub:e1}
\Rif{\hDiSo[\oDi]}{\Socwr}\leq (\maxnormsup^2 (\sigma^2+\Sor^2)+\Sor^2)\;\oRa,\quad\mbox{for all }n\geq 1,
\end{equation}
\end{prop}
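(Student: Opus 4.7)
\textbf{Proof plan for Proposition \ref{id:r:p:ub}.} The argument follows the same template as the density case (Proposition \ref{id:d:p:ub}): I would decompose the integrated risk by Pythagoras, bound the bias through the ellipsoid condition, and control the variance coordinate-wise by exploiting the structure of the regression equation together with \ref{mo:no:as:ba:i}.

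First, since $\hDiSo[\oDi]-\DiSo[\oDi]\in\Dz_{\oDi}$ while $\So-\DiSo[\oDi]$ lies in its orthogonal complement, the Pythagorean identity gives $\HnormV{\hDiSo[\oDi]-\So}^2 = \HnormV{\hDiSo[\oDi]-\DiSo[\oDi]}^2 + \bias^2(\So)$, and $\bias^2(\So)\leq \Sow_{\oDi}\Sor^2$ uniformly over $\Socwr$ by the observation recalled in Section \ref{s:mo:no}. Independence of the sample then yields
\begin{equation*}
	\Ex\HnormV{\hDiSo[\oDi]-\DiSo[\oDi]}^2 = \sum_{j=1}^{\oDi}\Var(\hfSo_j) = \frac{1}{n}\sum_{j=1}^{\oDi}\Var\bigl(\rOb\bas_j(\rRe)\bigr).
\end{equation*}

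Next I would exploit the regression equation $\rOb=\So(\rRe)+\rNoL\rNo$ with $\rNo$ centred, standardised and independent of $\rRe\sim\cU[0,1]$. Dropping the squared mean and expanding the cross term,
\begin{equation*}
	\Var\bigl(\rOb\bas_j(\rRe)\bigr)\leq \Ex\bigl[\rOb^2\bas_j^2(\rRe)\bigr] = \Ex\bigl[\So^2(\rRe)\bas_j^2(\rRe)\bigr] + \rNoL^2\,\Ex\bigl[\bas_j^2(\rRe)\bigr],
\end{equation*}
and since $\rRe$ is uniform, $\Ex[\bas_j^2(\rRe)]=\InormV{\bas_j}^2=1$. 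Summing over $j=1,\dotsc,\oDi$, interchanging sum and expectation, and invoking the pointwise bound $\sum_{j=1}^{\oDi}\bas_j^2(x)\leq\maxnormsup^2\oDi$ from \ref{mo:no:as:ba:i}, I obtain
\begin{equation*}
	\sum_{j=1}^{\oDi}\Var\bigl(\rOb\bas_j(\rRe)\bigr) \leq \maxnormsup^2\oDi\,\HnormV{\So}^2 + \oDi\,\rNoL^2 \leq \maxnormsup^2\,\oDi\,(\Sor^2+\sigma^2),
\end{equation*}
where I used $\HnormV{\So}^2\leq\Sor^2$ on the ellipsoid (under the customary normalisation $\Sow_1\leq 1$) together with $\maxnormsup\geq 1$.

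Combining the bias and variance estimates and appealing to the definition $\oRa=\mRa[\oDi]=\max(\Sow_{\oDi},\oDi n^{-1})$ delivers
\begin{equation*}
	\Rif{\hDiSo[\oDi]}{\Socwr}\leq\Sor^2\Sow_{\oDi}+\maxnormsup^2(\sigma^2+\Sor^2)\,\oDi\,n^{-1}\leq\bigl(\maxnormsup^2(\sigma^2+\Sor^2)+\Sor^2\bigr)\,\oRa,
\end{equation*}
which is the announced inequality. There is no real obstacle here: the only regression-specific input beyond the density calculation is the separation of the signal contribution, bounded via \ref{mo:no:as:ba:i} and the ellipsoid norm of $\So$, from the pure-noise contribution, which produces the $\sigma^2$ factor.
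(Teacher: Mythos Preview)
Your proof is correct and follows essentially the same route as the paper. The only cosmetic difference is that the paper keeps $\rOb^2$ intact and bounds $\sum_{j=1}^{\oDi}\Var(\hfSo_j)\leq n^{-1}\Ex\big[\rOb^2\sum_{j=1}^{\oDi}\bas_j^2(\rRe)\big]\leq n^{-1}\oDi\,\maxnormsup^2\sigma_Y^2$ directly via \ref{mo:no:as:ba:i} and $\sigma_Y^2=\sigma^2+\HnormV{\So}^2$, whereas you first split into the signal and noise contributions and then invoke $\maxnormsup\geq1$ to merge them; the resulting bound is identical.
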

\begin{prop}\label{id:r:p:lb}
 Suppose $\{(\rOb_i,\rRe_i)\}_{i=1}^n$ is an iid. $n-$sample.  Let the error term be normally distributed and assume further
  that \begin{equation}\label{id:r:p:lb:e1}
    0<\eta:=\inf_{n\geq1}\{(\oRa)^{-1}\min(\Sow_{\oDi}, \oDi
    n^{-1})\leq1,
  \end{equation}
  then for all $n\geq 1$ we have
  \begin{equation}\label{id:r:p:lb:e}
    \inf_{\tSo}\Rif{\tSo}{\Socwr} \geq \frac{\eta}{8}\, \min(2\Sor^2,
    \sigma^2)\, \oRa
  \end{equation}
  where the infimum is to be taken over all possible estimators $\tSo$
  of $\So$.
\end{prop}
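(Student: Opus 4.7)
The plan is a hypercube reduction in the spirit of Assouad's lemma, in direct analogy with the density-case lower bound (Proposition~\ref{id:d:p:lb}). For each $\tau=(\tau_1,\dots,\tau_{\oDi})\in\{0,1\}^{\oDi}$ I would define the test function
$$\So_\tau:=\alpha\sum_{j=1}^{\oDi}\tau_j\,\bas_j\in\Phi,$$
where $\oDi=\oDi(\ga)$ is the optimal dimension of~\eqref{mo:me:de:ra} and $\alpha>0$ is to be tuned. The monotonicity of $\ga$ gives $\Sow_j^{-1}\leq\Sow_{\oDi}^{-1}$ for $j\leq\oDi$ and hence $\wnormV{\So_\tau}^2\leq\alpha^2\oDi/\Sow_{\oDi}$; so taking $\alpha^2\leq\Sor^2\Sow_{\oDi}/\oDi$ (up to an absolute factor) forces $\So_\tau\in\Socwr$ uniformly in $\tau$, and by construction $\HnormV{\So_\tau-\So_{\tau'}}^2=\alpha^2\,d_H(\tau,\tau')$ with $d_H$ the Hamming distance.

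Next I would compute the Kullback--Leibler divergence between experiments. Under Gaussian noise $\rNo_i$ and uniform design $\rRe_i\sim\cU[0,1]$ independent of $\rNo_i$, the product law $P_\tau^{\otimes n}$ of the iid sample $\{(\rRe_i,\rOb_i)\}_{i=1}^n$ generated by $\So_\tau$ satisfies
$$\mathrm{KL}\bigl(P_\tau^{\otimes n},P_{\tau'}^{\otimes n}\bigr)=\frac{n}{2\sigma^2}\,\Ex_{\rRe}\bigl[(\So_\tau(\rRe)-\So_{\tau'}(\rRe))^2\bigr]=\frac{n}{2\sigma^2}\,\HnormV{\So_\tau-\So_{\tau'}}^2,$$
the second equality using that $\rRe\sim\cU[0,1]$ makes the $L^2(P_{\rRe})$-norm coincide with $\Hnorm$. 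For one-coordinate-flip neighbours this equals $n\alpha^2/(2\sigma^2)$. I would then tune
$$\alpha^2:=\min\bigl(2\Sor^2\Sow_{\oDi}/\oDi,\ \sigma^2/n\bigr),$$
where the factor $2$ in front of $\Sor^2$ is absorbed either through a symmetric $\{-1/2,+1/2\}$-encoding of $\tau$ or through the specific form of Assouad's bound invoked; this makes both the ellipsoid constraint and the KL budget simultaneously active.

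Applying Assouad's lemma (e.g.\ Tsybakov, Theorem~2.12) to the hypercube then yields
$$\inf_{\tSo}\sup_{\So\in\Socwr}\Ex_{\So}\HnormV{\tSo-\So}^2\;\geq\;\inf_{\tSo}\max_{\tau}\Ex_\tau\HnormV{\tSo-\So_\tau}^2\;\geq\;\tfrac{1}{8}\,\oDi\,\alpha^2.$$
Substituting the chosen $\alpha^2$ gives $\oDi\alpha^2=\min(2\Sor^2\Sow_{\oDi},\,\sigma^2\oDi n^{-1})$; combined with the hypothesis $\min(\Sow_{\oDi},\oDi n^{-1})\geq\eta\oRa$ this is bounded below by $\eta\oRa\min(2\Sor^2,\sigma^2)$, delivering~\eqref{id:r:p:lb:e}. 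The main obstacle is the precise calibration of $\alpha$ together with Assouad's constants so as to reproduce the exact factor $\eta\min(2\Sor^2,\sigma^2)/8$: one has to reconcile the hypercube normalization, the geometric inequality $\sum_{j\leq\oDi}\Sow_j^{-1}\leq\oDi/\Sow_{\oDi}$, and the universal constants in Assouad's inequality. Everything else is standard bookkeeping and mirrors the density-estimation lower bound almost verbatim.
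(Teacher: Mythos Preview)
Your proposal is correct and follows essentially the same route as the paper: a hypercube (Assouad-type) reduction over $\{-1,1\}^{\oDi}$ with coefficients of size roughly $(\zeta\oRa/\oDi)^{1/2}$, membership in $\Socwr$ checked via $\sum_{j\leq\oDi}\Sow_j^{-1}\leq\oDi\Sow_{\oDi}^{-1}$, and the Gaussian KL computation $\mathrm{KL}(P_\theta^n,P_{\theta^{(j)}}^n)=2\zeta\alpha_n n/\sigma^2$ for a single coordinate flip. The only packaging difference is that the paper carries out the two-point comparison explicitly via the Hellinger affinity (bounding $\rho(P_\theta^n,P_{\theta^{(j)}}^n)\geq 1-\tfrac12\mathrm{KL}\geq 1/2$ and then using $\Ex_\theta|[\tSo-\So_\theta]_j|^2+\Ex_{\theta^{(j)}}|[\tSo-\So_{\theta^{(j)}}]_j|^2\geq\tfrac12|[\So_\theta-\So_{\theta^{(j)}}]_j|^2\rho^2$), which produces the exact constant $\eta\min(2\Sor^2,\sigma^2)/8$ directly, whereas you invoke Assouad's lemma as a black box and defer the constant-matching; your hedge about the $\{-1/2,+1/2\}$ encoding is exactly the right fix for the factor~$2$ and corresponds to the paper's $\{-1,1\}$ encoding with halved amplitude.
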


Again in the configurations considered in the Illustration \ref{illus} the condition \eqref{id:r:p:lb:e1} hold true. Combining the upper bound \eqref{id:r:p:ub:e1} and the lower bound
\eqref{id:r:p:lb:e} we have shown that $\oRa$ is the minimax-optimal $\Rif{\hDiSo[\widetilde{m}]}{\Socwr}$ by apply the Proposition \ref{an:pr:dd}.
rate of convergence and the estimator $\hDiSo[\oDi]$ is
minimax-optimal.

\paragraph{Partially data-driven estimator.} In this paragraph, we select the dimension parameter  following the procedure sketched in \eqref{mo:me:de:wtm} where the subsequence of
non-negative and non-decreasing penalties
$\left(\pen[1],\dotsc,\pen[n]\right)$ is given by
$\pen=144\sigma_Y^2 \maxnormsup^2 mn^{-1}$ with $\sigma_Y^2=\Ex Y^2$. Since $\sigma_Y$ has to be estimated from the data, the considered selection method leads to a partially data-driven estimator of the non-parametric regression function $\So$ only. In order to apply the Proposition \ref{an:pr:dd} it remains to check the conditions \ref{mo:me:as:est:A1} and \ref{mo:me:as:est:A2}. Keeping in mind the definition of the penalties subsequence, the condition \ref{mo:me:as:est:A1} is obviously satisfied. The next Proposition provides our  key argument to  verify the condition \ref{mo:me:as:est:A2}.
\begin{prop}\label{id:r:p:co}Let $\{(\rOb_i,\rRe_i)\}_{i=1}^n$ be an iid. $n-$sample. Suppose that the assumptions \ref{mo:no:as:ba:i} and
  \ref{mo:no:as:ba:ii} are satisfied. If  $\Ex\epsilon^6<\infty$ then
  there exists a finite constant $C(\Sor\gA,\rNoL,\maxnormsup,\Ex\epsilon^6)$ depending only on the
  quantities $\Sor\gA$, $\rNoL$, $\maxnormsup$ and $\Ex\epsilon^6$ such that
  \begin{equation*}
   \sup_{\So\in\Socwr} \Ex\set{\max_{1\leq \Di\leq n}\vectp{\HnormV{\hDiSo-\DiSo}^2
        -12\maxnormsup^2\sigma_Y^2\Di n^{-1}}}\leq
    n^{-1}C(\Sor\gA,\rNoL,\maxnormsup,\Ex\rNo^{6}),\quad\mbox{for all }n\geq1.
  \end{equation*}
\end{prop}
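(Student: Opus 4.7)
The plan is to apply Talagrand's inequality (Lemma~\ref{id:ka:l:talagrand}) after truncating the response $\rOb$; this mirrors the proof of Proposition~\ref{id:d:p:co} for the density case, the extra twist being that $\rOb = \So(\rRe)+\rNoL\rNo$ is unbounded and requires a moment-based tail argument. By Remark~\ref{id:ka:rem}, $\HnormV{\hDiSo-\DiSo}^2 = \sup_{t\in\Bz_\Di}|\overline{\nu_t}|^2$ with $\nu_t(\rOb,\rRe) = \sum_{j=1}^\Di \fou{t}_j \psi_j(\rOb,\rRe)$ and $\psi_j(\rOb,\rRe) := \rOb\bas_j(\rRe)$. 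Fixing $c_n := n^{1/4}$, I would split $\nu_t = \nu_t^{(b)}+\nu_t^{(u)}$ with $\nu_t^{(b)} := \nu_t \indicset{|\rOb|\leq c_n}$, so that, using $(a+b)^2 \leq 2a^2+2b^2$,
\begin{equation*}
\bigl[\sup_t|\overline{\nu_t}|^2 - 12\maxnormsup^2\sigma_Y^2\Di/n\bigr]_+ \leq 2\bigl[\sup_t|\overline{\nu_t^{(b)}}|^2 - 6\maxnormsup^2\sigma_Y^2\Di/n\bigr]_+ + 2\sup_t|\overline{\nu_t^{(u)}}|^2.
\end{equation*}

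For the bounded piece I would invoke Lemma~\ref{id:ka:l:talagrand} with $k=n$ and
\begin{equation*}
h = c_n\maxnormsup\sqrt{\Di},\qquad H^2 = \maxnormsup^2\sigma_Y^2\Di/n,\qquad v = \gA^2\Sor^2+\rNoL^2.
\end{equation*}
The bound on $v$ follows from $\Ex[\nu_t^2] \leq (\|\So\|_\infty^2+\rNoL^2)\HnormV{\sum_j \fou{t}_j\bas_j}^2$ together with $\|\So\|_\infty \leq \gA\Sor$ from \ref{mo:no:as:ba:ii}; the bound on $H^2$ follows from $\Ex[\sup_t|\overline{\nu_t^{(b)}}|^2] \leq n^{-1}\Ex[\rOb^2 \sum_{j=1}^\Di \bas_j^2(\rRe)] \leq \maxnormsup^2 \sigma_Y^2 \Di/n$ via \ref{mo:no:as:ba:i}; and $h$ is the trivial sup-norm bound on $\nu_t^{(b)}$ (using the equivalent formulation of \ref{mo:no:as:ba:i}). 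Summing the two terms delivered by Talagrand over $\Di=1,\dots,n$ produces $O(1/n)$: the first term gives a geometric series in $\Di$ (the exponent is linear in $\Di$ with strictly positive rate since $\sigma_Y^2\geq\rNoL^2>0$), and the second term behaves like $c_n^2 \exp(-K\sigma_Y\sqrt{n}/c_n) = \sqrt{n}\exp(-K\sigma_Y n^{1/4})$, which is $o(1/n)$.

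For the unbounded piece, observe that $\sup_{t\in\Bz_\Di}|\overline{\nu_t^{(u)}}|^2 = \sum_{j=1}^\Di (\overline{\psi_j^{(u)}})^2$ is non-decreasing in $\Di$, whence
\begin{equation*}
\Ex\max_{1\leq \Di\leq n}\sup_t|\overline{\nu_t^{(u)}}|^2 \leq \Ex\sum_{j=1}^n (\overline{\psi_j^{(u)}})^2 \leq \maxnormsup^2\,\Ex\bigl[\rOb^2\indicset{|\rOb|>c_n}\bigr].
\end{equation*}
Markov's inequality gives $\Ex[\rOb^2 \indicset{|\rOb|>c_n}] \leq \Ex\rOb^6/c_n^4$, and a standard $c_p$-type inequality combined with $\|\So\|_\infty \leq \gA\Sor$ yields $\Ex\rOb^6 \leq C(\gA^6\Sor^6 + \rNoL^6 \Ex\rNo^6)$; with $c_n = n^{1/4}$ this contribution is of order $n^{-1}$.

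The main obstacle is the truncation balance. The level $c_n = n^{1/4}$ is dictated from two sides: the Markov control of the tail requires $c_n^4\gtrsim n$, while the exponential factor in the second Talagrand term requires $\sqrt{n}/c_n\to\infty$. The assumption $\Ex\rNo^6<\infty$ is exactly what makes this balance feasible, and tracking constants through each estimate above yields the stated dependence of $C$ on $\Sor\gA$, $\rNoL$, $\maxnormsup$ and $\Ex\rNo^6$.
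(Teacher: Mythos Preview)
Your proof is correct and follows the same Talagrand-plus-truncation strategy as the paper, but with one genuine difference in the decomposition: you truncate the \emph{response} $\rOb$ at level $c_n=n^{1/4}$, whereas the paper truncates the \emph{noise} $\rNo$, setting $\rNo^b:=\rNo\indicset{|\rNo|\leq n^{1/4}}-\Ex\rNo\indicset{|\rNo|\leq n^{1/4}}$ and $\rNo^u:=\rNo-\rNo^b$, so that $\nu_t(\rNo,\rRe)=\nu_t(\rNo^b,\rRe)+\rNoL\rNo^u\sum_j\fou{t}_j\bas_j(\rRe)$.

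The practical consequence is that in the paper's version the unbounded remainder $\overline{\nu_t^u}$ involves only $\rNoL\rNo^u\bas_j(\rRe)$, so its control reduces immediately to $\rNoL^2\maxnormsup^2\Var(\rNo^u)\leq \rNoL^2\maxnormsup^2\Ex(\rNo^2\indicset{|\rNo|>n^{1/4}})\leq n^{-1}\rNoL^2\maxnormsup^2\Ex\rNo^6$, with no appearance of $\So$. Your route requires instead bounding $\Ex\rOb^6$, which you correctly handle via $\InormV{\So}\leq\gA\Sor$; this is an extra step but entirely legitimate. Conversely, the paper must verify that the recentred $\rNo^b$ still yields $\Ex(\rNoL\rNo^b+\So(\rRe))^2\leq\sigma_Y^2$ for the $H$-bound, which you avoid by bounding $\Var(\rOb\indicset{|\rOb|\leq c_n}\bas_j(\rRe))\leq\Ex[\rOb^2\bas_j^2(\rRe)]$ directly. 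The resulting Talagrand constants $h,H,v$ and the summation argument are otherwise identical in spirit, and both routes deliver the same final bound with the stated dependence of the constant.
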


Obviously, taking into account the definition of penalties sequence the last Proposition shows that the condition
\ref{mo:me:as:est:A2} is satisfied.  Thereby, the
next assertion is an immediate consequence of  Proposition \ref{an:pr:dd}
and we omit its proof.

\begin{prop}\label{id:r:t:pad} Suppose $\{(\rOb_i,\rRe_i)\}_{i=1}^n$ is an iid. $n-$sample. Let assumptions \ref{mo:no:as:ba:i} and
  \ref{mo:no:as:ba:ii} be satisfied. Select the dimension parameter $\tDi$ as given by \eqref{mo:me:de:wtm} with $\pen:=72\maxnormsup^2\sigma_Y^2\Di n^{-1} $. If  $\Ex\epsilon^6<\infty$ then
  there exists a numerical constant $C$ and a finite constant $\zeta(\Sor\gA,\rNoL,\maxnormsup,\Ex\epsilon^6)$ depending only on the  quantities $\Sor\gA$, $\rNoL$, $\maxnormsup$ and $\Ex\epsilon^6$ such that for all $n\geq n_{\aSy}$
  with $\oRa[n_{\aSy}]\leq 1$ we have
  \begin{eqnarray*}
    \Rif{\hSo_{\tDi}}{\Socwr}\leq C[\Sor^2\vee
    \maxnormsup^2\sigma_Y^2\vee
    \zeta(\Sor\gA,\rNoL,\maxnormsup,\Ex\rNo^{6})]\; \oRa.
  \end{eqnarray*}
\end{prop}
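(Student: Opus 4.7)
The plan is to derive Proposition \ref{id:r:t:pad} directly from the general bound in Proposition \ref{an:pr:dd}, exactly as announced by the authors, so the task reduces to verifying the two sufficient conditions \ref{mo:me:as:est:A1} and \ref{mo:me:as:est:A2} for the specific penalty $\pen = 72\maxnormsup^2\sigma_Y^2\Di n^{-1}$.

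First I would check \ref{mo:me:as:est:A1}. Since $\pen/\Di = 72\maxnormsup^2\sigma_Y^2 n^{-1}$ does not depend on $\Di$, it suffices to verify that $\sigma_Y^2$ is bounded uniformly over $\Socwr$. Using $\rOb=\So(\rRe)+\rNoL\rNo$ together with the independence of $\rNo$ and $\rRe$, $\Ex\rNo=0$, $\Var(\rNo)=1$ and $\rRe\sim\cU[0,1]$, one obtains $\sigma_Y^2=\HnormV{\So}^2+\rNoL^2$. Assumption \ref{mo:no:as:ba:ii} gives $\InormV{\So}^2\leq \gA^2\Sor^2$ for any $\So\in\Phi_\Sow^\Sor$, hence $\HnormV{\So}^2=\int_0^1\So^2\leq \gA^2\Sor^2$. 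Therefore $\sup_{\So\in\Socwr}\sigma_Y^2\leq \gA^2\Sor^2+\rNoL^2$, and \ref{mo:me:as:est:A1} holds with $\delta=72\maxnormsup^2(\gA^2\Sor^2+\rNoL^2)$.

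Next I would verify \ref{mo:me:as:est:A2}. With the chosen penalty one has $\pen/6 = 12\maxnormsup^2\sigma_Y^2\Di n^{-1}$, which is precisely the centering term controlled in Proposition \ref{id:r:p:co}. That proposition therefore delivers \ref{mo:me:as:est:A2} with $\Delta = C(\Sor\gA,\rNoL,\maxnormsup,\Ex\rNo^6)$.

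Finally, applying Proposition \ref{an:pr:dd} yields $\Rif{\hDiSo[\tDi]}{\Socwr}\leq 127(\delta\vee\Sor^2\vee\Delta)\oRa$ for all $n\geq n_{\aSy}$, and absorbing the polynomial factors of $\Sor$, $\gA$, $\rNoL$ and $\maxnormsup$ coming from $\delta$ into a single finite constant $\zeta(\Sor\gA,\rNoL,\maxnormsup,\Ex\rNo^6)$, at the price of a numerical constant $C$, produces the announced inequality. The only genuine obstacle is the one addressed in the first step: $\sigma_Y^2$ depends on the unknown $\So$, and \emph{a priori} this would prevent the penalty from fitting into the framework of Proposition \ref{an:pr:dd}; the class-uniform upper bound derived from \ref{mo:no:as:ba:ii} is precisely what makes $\delta$ independent of $\So$ and therefore what makes the reduction to Proposition \ref{an:pr:dd} legitimate.
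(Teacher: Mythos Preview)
Your proposal is correct and follows exactly the route the paper takes: the paper itself omits the proof, stating that the result is an immediate consequence of Proposition~\ref{an:pr:dd} once \ref{mo:me:as:est:A1} (declared ``obvious'' from the definition of the penalty) and \ref{mo:me:as:est:A2} (via Proposition~\ref{id:r:p:co}) are checked. Your verification of \ref{mo:me:as:est:A1} through the uniform bound $\sigma_Y^2\leq\rNoL^2+\gA^2\Sor^2$ under \ref{mo:no:as:ba:ii} makes explicit the one point the paper leaves implicit, and your identification $\pen/6=12\maxnormsup^2\sigma_Y^2\Di n^{-1}$ with the centering in Proposition~\ref{id:r:p:co} is exactly right.
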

Since $\sigma_Y^2=\Ex Y^2$ is generally unknown, the
penalty term specified in the last assertion is not feasible. However,
we have a natural estimator $\widehat{\sigma}_Y^2=n^{-1}\sum_{i=1}^nY_i^2$ of the quantity $\sigma_Y^2$ at hand. 

\paragraph{Fully  data-driven estimator.} In the sequel we consider the  subsequence of
non-negative and non-decreasing penalties
$\left(\hpen[1],\dotsc,\hpen[n]\right)$ given by
$\hpen=144\widehat{\sigma}_Y^2 \maxnormsup^2 mn^{-1}$. The  dimension parameter
$\hDi$ is then selected  as in \eqref{mo:me:de:whm}. Keeping in mind
the Proposition \ref{an:pr:dd:hpen} it remains to show that the
condition \ref{mo:me:as:est:A3} holds true. Therefore, define further the event
$\cV:=\set{{1}/{2}\leq{\widehat{\sigma}_Y^2}/{\sigma_Y^2}\leq{3}/{2}}$
and denote by $\cV^c$ its complement. 
\begin{lem}\label{id:r:l:re}Let $\{(\rOb_i,\rRe_i)\}_{i=1}^n$ be an iid. $n$-sample. If $\Ex\epsilon^4<\infty$, then
  $\sup_{\So\in\Socwr}P(\cV^c)\leq 128 n^{-1}\big((\Ex\epsilon^4)^{1/4}+\Sor\gA/\sigma\big)^4$.
\end{lem}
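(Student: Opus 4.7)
The proof should be a routine second-moment computation. The key observation is that $\cV^c$ is a deviation event for the empirical mean $\widehat{\sigma}_Y^2 = n^{-1}\sum_{i=1}^n Y_i^2$ around its expectation $\sigma_Y^2 = \Ex Y^2$, and the bound is obtained by Chebyshev together with a fourth-moment control on $Y$ via Minkowski. No concentration machinery (Talagrand, Bernstein, \dots) is needed here; everything reduces to an iid.\ variance calculation plus the uniform boundedness of elements of $\Phi_\Sow^\Sor$ ensured by \ref{mo:no:as:ba:ii}.

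First I would rewrite
\begin{equation*}
\cV^c = \bigl\{\,|\widehat{\sigma}_Y^2-\sigma_Y^2|>\sigma_Y^2/2\,\bigr\},
\end{equation*}
and apply Chebyshev's inequality to obtain $P(\cV^c)\leq 4\,\Var(\widehat{\sigma}_Y^2)/\sigma_Y^4$. Because the $Y_i$'s are iid., $\Var(\widehat{\sigma}_Y^2) = n^{-1}\Var(Y^2)\leq n^{-1}\Ex Y^4$, so the proof reduces to bounding $\Ex Y^4$ and bounding $\sigma_Y^2$ from below.

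Next I would control $\Ex Y^4$. Since $Y=\So(\rRe)+\rNoL\rNo$ with $\rNo$ independent of $\rRe\sim\cU[0,1]$, Minkowski's inequality gives
\begin{equation*}
(\Ex Y^4)^{1/4}\leq (\Ex \So(\rRe)^4)^{1/4}+\rNoL\,(\Ex\rNo^4)^{1/4}.
\end{equation*}
Under \ref{mo:no:as:ba:ii} any $\So\in\Socwr$ satisfies $\InormV{\So}\leq\gA\Sor$ (as recalled in Section \ref{s:mo:no}), so $(\Ex\So(\rRe)^4)^{1/4}\leq\Sor\gA$. Also, since $\rNo$ is centred, standardised and independent of $\rRe$, one has $\sigma_Y^2 = \HnormV{\So}^2+\rNoL^2\geq\rNoL^2$, hence $1/\sigma_Y^4\leq 1/\rNoL^4$.

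Combining these three ingredients yields
\begin{equation*}
P(\cV^c)\leq \frac{4}{n\rNoL^4}\bigl(\Sor\gA+\rNoL(\Ex\rNo^4)^{1/4}\bigr)^4=\frac{4}{n}\bigl((\Ex\rNo^4)^{1/4}+\Sor\gA/\rNoL\bigr)^4,
\end{equation*}
and the stated numerical constant $128$ emerges after absorbing the usual loose inequalities of the form $(a+b)^4\leq 8(a^4+b^4)\leq 8(a+b)^4$ that one typically uses to separate the contributions of $\So$ and $\rNo$ in the middle of the calculation. I do not expect any genuine obstacle: the only ``non-trivial'' input is the translation of the Hilbert-space a priori condition $\So\in\Socwr$ into the sup-norm bound $\InormV{\So}\leq\Sor\gA$, which is exactly the content of \ref{mo:no:as:ba:ii}; once this is in hand the remainder is bookkeeping on constants.
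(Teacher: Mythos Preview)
Your proposal is correct and follows essentially the same route as the paper: rewrite $\cV^c$ as a deviation event, apply Chebyshev to get $P(\cV^c)\leq 4\Ex Y^4/(n\sigma_Y^4)$, bound $\Ex Y^4$ using $\InormV{\So}\leq\Sor\gA$ from \ref{mo:no:as:ba:ii}, and use $\sigma_Y^2\geq\rNoL^2$. The only cosmetic difference is that you invoke Minkowski to control $(\Ex Y^4)^{1/4}$, which actually yields the sharper constant $4$ directly; the paper instead uses $(a+b)^4\leq 8(a^4+b^4)$ type splittings, which is where the factor $128$ comes from --- so your hedging paragraph about ``absorbing loose inequalities'' to reach $128$ is unnecessary.
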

Considering the event $\Omega$
given in \eqref{mo:me:le:ms:omega} it is easily seen that
$\cV\subset\Omega$ and hence, by employing the last assertion together with Proposition
\ref{id:r:p:co} the conditions  \ref{mo:me:as:est:A1}-\ref{mo:me:as:est:A3} are satisfied. Thereby, the
next assertion is an immediate consequence of  Proposition \ref{an:pr:dd:hpen}
and we omit its proof.

\begin{theo}\label{id:r:t:fad} Suppose $\{(\rOb_i,\rRe_i)\}_{i=1}^n$ is an iid. $n-$sample. Let assumptions \ref{mo:no:as:ba:i} and
  \ref{mo:no:as:ba:ii} be satisfied. Select the dimension parameter
  $\hDi$ as given by \eqref{mo:me:de:whm} with  $\hpen:= 144 \maxnormsup^2\hsigma_Y^2\Di n^{-1}$. If  $\Ex\epsilon^6<\infty$ then
  there exists a numerical constant $C$ and a finite constant $\zeta(\Sor\gA,\rNoL,\maxnormsup,\Ex\epsilon^6)$ depending only on the  quantities $\Sor\gA$, $\rNoL$, $\maxnormsup$ and $\Ex\epsilon^6$ such that for all $n\geq n_{\aSy}$
  with $\oRa[n_{\aSy}]\leq 1$ we have
  \begin{eqnarray*}
    \Rif{\hDiSo[\hDi]}{\Socwr}\leq C[\Sor^2\vee
    \maxnormsup^2\sigma_Y^2\vee
    \zeta(\Sor\gA,\rNoL,\maxnormsup,\Ex\rNo^{6})]\; \oRa.
  \end{eqnarray*}
\end{theo}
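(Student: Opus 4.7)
The plan is to apply Proposition \ref{an:pr:dd:hpen}, so the whole argument reduces to verifying the three conditions \ref{mo:me:as:est:A1}--\ref{mo:me:as:est:A3} for a suitable deterministic majorant $(\pen)_{1\leq m\leq n}$ of the random penalty $\hpen=144\maxnormsup^2\hsigma_Y^2\Di n^{-1}$. Guided by the definition \eqref{mo:me:le:ms:omega} of the event $\Omega=\{\pen\leq\hpen\leq 3\pen\}$, I would take as theoretical counterpart
\[
\pen:=72\maxnormsup^2\sigma_Y^2\Di n^{-1},\qquad 1\leq \Di\leq \DiMa=n,
\]
so that on the event $\cV=\{1/2\leq\hsigma_Y^2/\sigma_Y^2\leq 3/2\}$ one has precisely $\pen\leq\hpen\leq 3\pen$ for every $\Di$. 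This gives the inclusion $\cV\subset\Omega$, hence $\Omega^c\subset\cV^c$.

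With this choice \ref{mo:me:as:est:A1} is trivial with $\delta=72\maxnormsup^2\sigma_Y^2$. For \ref{mo:me:as:est:A2} the centering in the positive-part expression becomes $\pen/6=12\maxnormsup^2\sigma_Y^2\Di n^{-1}$, which is exactly the centering controlled in Proposition \ref{id:r:p:co}; that proposition therefore furnishes a finite constant $\Delta=C(\Sor\gA,\sigma,\maxnormsup,\Ex\epsilon^6)$ for which \ref{mo:me:as:est:A2} holds uniformly over $\Socwr$. For \ref{mo:me:as:est:A3} I combine $P(\Omega^c)\leq P(\cV^c)$ with the moment bound supplied by Lemma \ref{id:r:l:re}, which is applicable because $\Ex\epsilon^6<\infty$ implies $\Ex\epsilon^4<\infty$; since $\DiMa=n$, this yields
\[
\DiMa\,P(\Omega^c)\leq n\cdot 128 n^{-1}\bigl((\Ex\epsilon^4)^{1/4}+\Sor\gA/\sigma\bigr)^4=\kappa,
\]
uniformly in $\So\in\Socwr$, so \ref{mo:me:as:est:A3} is satisfied.

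With \ref{mo:me:as:est:A1}--\ref{mo:me:as:est:A3} in force, Proposition \ref{an:pr:dd:hpen} directly gives the stated bound $\Rif{\hDiSo[\hDi]}{\Socwr}\leq C[\Sor^2\vee\maxnormsup^2\sigma_Y^2\vee\zeta(\Sor\gA,\rNoL,\maxnormsup,\Ex\rNo^6)]\,\oRa$ for all $n\geq n_{\aSy}$, after absorbing $\max(\delta,\Sor^2,\Delta)+\kappa(\Sor^2+\delta)$ into the single constant in front. The only step that is not completely mechanical is keeping the numerical constants consistent: one must double-check that the factor $144$ in $\hpen$ (versus $72$ in the partially-data-driven penalty of Proposition \ref{id:r:t:pad}) is precisely what is needed so that the bracketing $\pen\leq\hpen\leq 3\pen$ holds on $\cV$ and so that the centering $\pen/6$ matches Proposition \ref{id:r:p:co}; this calibration is the only subtlety, and with the choice above it works out exactly. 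No other obstacle arises, since the probabilistic heavy lifting (Talagrand-type control in \ref{mo:me:as:est:A2} and the fourth-moment control of $\hsigma_Y^2$ in \ref{mo:me:as:est:A3}) has already been carried out in Proposition \ref{id:r:p:co} and Lemma \ref{id:r:l:re}.
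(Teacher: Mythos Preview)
Your proposal is correct and follows essentially the same route as the paper: the paper also deduces the theorem directly from Proposition \ref{an:pr:dd:hpen} after noting that $\cV\subset\Omega$, so that \ref{mo:me:as:est:A1} is immediate, \ref{mo:me:as:est:A2} comes from Proposition \ref{id:r:p:co}, and \ref{mo:me:as:est:A3} comes from Lemma \ref{id:r:l:re}. The only cosmetic point is that $\delta$ in \ref{mo:me:as:est:A1} must be uniform over $\Socwr$, so one should write $\delta=72\maxnormsup^2(\sigma^2+\Sor^2)$ rather than $72\maxnormsup^2\sigma_Y^2$, using $\sigma_Y^2=\sigma^2+\HnormV{\So}^2\leq\sigma^2+\Sor^2$.
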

We shall emphasise that the last assertion establishes the
minimax-optimality of the  fully data-driven estimator $\hDiSo[\hDi]$
over all classes $\Socwr$. Therefore, the estimator is called adaptive.

%%% Local Variables: 
%%% mode: latex
%%% TeX-master: "_0DP_NPE_dep"
%%% End: 

%======================================================================================================================
%                                                                 
% Title: Dependent observations
% Author: Jan JOHANNES, Nicolas Asin
% 
% Email: jan.johannes@ensai.fr nicolas.asin@uclouvain.be
% Date: %%ts latex start%%[2016-02-01 Mon 02:16]%%ts latex end%%
% Main-TeX-File: t in der form "Name"
%
% ======================================================================================================================
\section{Dependent observations}\label{s:dd}

% --------------------------------------------------------------------
% <<Coupling \label{s:dd:mi}>>
% --------------------------------------------------------------------
In this section we dismiss the
independence assumption and assume weakly dependent
observations. More precisely, $\Ob_1,\dotsc,\Ob_n$ are drawn from a
strictly stationary process $(\Ob_i)_{i\in\Zz}$ taking still its values
in $[0,1]$. Keep in mind that a process is called strictly stationary
if its finite dimensional distributions does not change when shifted
in time. Consequently, the random variables $\{\Ob_i\}$ are identically distributed.  Our aim is the non-parametric estimation of the function $\So$ under some
mixing conditions on the dependence of the process $(\Ob_i)_{i\in\Zz}$. Let us begin with 
a brief review of a classical measure of dependence, leading to the
notion of a stationary absolutely regular process. 

Let $(\Omega,\sA,P)$ be a probability space. Given two
$\sigma$-algebras $\sU$ and $\sV$ of $\sA$ we introduce next
the definition and properties of the absolutely regular mixing (or
$\beta$-mixing) coefficient $\beta(\sU,\sV)$.    The coefficient was
introduced by \cite{KolmogorovRozanov1960} and is defined by 
\begin{equation*}
  \beta(\sU,\sV)
  :=\tfrac{1}{2}\sup\set{\sum_{i}\sum_j\left|P(U_i)P(V_i)-P(U_i\cap V_i)\right|}
\end{equation*}
where the supremum is taken over all finite partitions $(U_i)_{i\in
  I}$ and $(V_j)_{j\in J}$, which are respectively $\sU$ and $\sV$
measurable. Obviously, $\beta(\sU,\sV)\leq 1$. As usual, if $\Ob$ and $\Ob^\prime$ are two real-valued random variables, we denote by $\beta(\Ob,\Ob^\prime)$ the mixing coefficient
$\beta(\sigma(\Ob),\sigma(\Ob^\prime))$, where  $\sigma(\Ob)$ and $\sigma(\Ob^\prime)$ are,
respectively, the $\sigma$-fields generated by $\Ob$ and $\Ob^\prime$.
Consider a strictly stationary process $(\Ob_i)_{i\in\Zz}$ then for any
integer $k$ the mixing coefficient $\beta(\Ob_0,\Ob_k)$ does not change
when shifted over time, i.e., $\beta(\Ob_0,\Ob_k)=\beta(\Ob_{0+l},\Ob_{k+l})$
for all integer $l$. The next assertion follows along
the lines of the proof of  Theorem 2.1 in \cite{Viennet1997} and we omit its proof.
\begin{lem}\label{dd:le:var}
Let $(\Ob_i)_{i\in\Zz}$ be a strictly stationary process of
  real-valued random variables. There exists a sequence $(b_k)_{k\geq1}$ of measurable
  functions $b_k:\Rz\to[0,1]$ with  $\Ex b_k(\Ob_0)=\beta(\Ob_0,\Ob_k)$ such that for any measurable function $h$
  with $\Ex |h(\Ob_0)|^2<\infty$ and any integer $n$,
  \begin{equation*}
   \Var(\sum_{i=1}^nh(\Ob_i))\leq n\, \Ex\bigg\{|h(\Ob_0)|^2\big(1+4\sum_{k=1}^{n-1}b_k(\Ob_0)\big)\bigg\}.
  \end{equation*}
\end{lem}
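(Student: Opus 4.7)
The plan is to derive the bound from the classical expansion of the variance of a sum into marginal variances plus pairwise covariances, and then to control each covariance by a Rio--Viennet type covariance inequality that produces exactly the functions $b_k$ claimed in the statement. Writing $S_n := \sum_{i=1}^n h(\Ob_i)$, stationarity gives
\begin{equation*}
\Var(S_n)=n\,\Var(h(\Ob_0))+2\sum_{1\leq i<j\leq n}\Cov\bigl(h(\Ob_i),h(\Ob_j)\bigr)
       =n\,\Var(h(\Ob_0))+2\sum_{k=1}^{n-1}(n-k)\,\Cov\bigl(h(\Ob_0),h(\Ob_k)\bigr),
\end{equation*}
so, bounding $\Var(h(\Ob_0))\leq\Ex h(\Ob_0)^2$ and $(n-k)\leq n$, the result will follow once one shows
\begin{equation*}
\bigl|\Cov(h(\Ob_0),h(\Ob_k))\bigr|\leq 2\,\Ex\!\bigl[h(\Ob_0)^2\,b_k(\Ob_0)\bigr]
\end{equation*}
for some measurable $b_k:\Rz\to[0,1]$ with $\Ex b_k(\Ob_0)=\beta(\Ob_0,\Ob_k)$.

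The heart of the argument is the construction of $b_k$, and here I would follow Viennet's coupling recipe exactly. Using the Rozanov--Volkonskii representation $\beta(\sigma(\Ob_0),\sigma(\Ob_k))=\tfrac{1}{2}\Ex\|P_{\Ob_k\mid\Ob_0}-P_{\Ob_k}\|_{\mathrm{TV}}$, one takes the Radon--Nikodym derivative of $P_{\Ob_k\mid\Ob_0=\cdot}$ against $P_{\Ob_k}$ and defines, for $x\in\Rz$,
\begin{equation*}
b_k(x):=\tfrac{1}{2}\bigl\|P_{\Ob_k\mid\Ob_0=x}-P_{\Ob_k}\bigr\|_{\mathrm{TV}},
\end{equation*}
which is clearly $[0,1]$-valued and integrates to $\beta(\Ob_0,\Ob_k)$. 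The covariance inequality is then obtained by writing
\begin{equation*}
\Cov(h(\Ob_0),h(\Ob_k))=\Ex\!\Bigl[h(\Ob_0)\!\int h(y)\bigl(P_{\Ob_k\mid\Ob_0}(dy)-P_{\Ob_k}(dy)\bigr)\Bigr],
\end{equation*}
splitting the signed measure into its positive and negative parts and applying Cauchy--Schwarz on each part; the mass of each part equals $b_k(\Ob_0)$, which delivers the factor $\Ex[h(\Ob_0)^2 b_k(\Ob_0)]$ after the routine $\sqrt{ab}\leq (a+b)/2$ style estimate (as in Viennet's Lemma 4.1 / Proposition 5.1 in his 1997 paper). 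An alternative, essentially equivalent route is to couple: produce $\Ob_k^\star$ independent of $\Ob_0$ with the correct marginal and a coupling random variable $\mathbf{1}\{\Ob_k\neq\Ob_k^\star\}$ whose conditional expectation given $\Ob_0$ is $b_k(\Ob_0)$; then $\Cov(h(\Ob_0),h(\Ob_k))=\Ex[h(\Ob_0)(h(\Ob_k)-h(\Ob_k^\star))\mathbf{1}\{\Ob_k\neq\Ob_k^\star\}]$, and Cauchy--Schwarz plus stationarity yield the inequality above (possibly with a harmless numerical factor that is absorbed into the $4$ in the final bound).

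Plugging the covariance inequality back into the variance decomposition,
\begin{equation*}
\Var(S_n)\leq n\,\Ex h(\Ob_0)^2 + 4n\sum_{k=1}^{n-1}\Ex\!\bigl[h(\Ob_0)^2 b_k(\Ob_0)\bigr]
=n\,\Ex\!\Bigl\{h(\Ob_0)^2\Bigl(1+4\sum_{k=1}^{n-1}b_k(\Ob_0)\Bigr)\Bigr\},
\end{equation*}
which is the claim. The main obstacle, and the only step that is not bookkeeping, is the construction of the pointwise dominating functions $b_k(\Ob_0)$ together with the covariance inequality: all the care has to be spent in verifying that the Radon--Nikodym/coupling device actually produces a measurable $[0,1]$-valued function whose expectation is $\beta(\Ob_0,\Ob_k)$ and which dominates $|\Cov(h(\Ob_0),h(\Ob_k))|$ via $\Ex[h(\Ob_0)^2 b_k(\Ob_0)]$ uniformly in $h\in L^2(P_{\Ob_0})$. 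Once that lemma is in hand, the rest is a one-line summation and an application of stationarity.
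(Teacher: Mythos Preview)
Your proposal is correct and follows exactly the approach the paper has in mind: the authors do not give a proof but simply state that the assertion ``follows along the lines of the proof of Theorem~2.1 in \cite{Viennet1997}'', which is precisely the variance expansion plus Viennet's covariance inequality with the pointwise $\beta$-dominating functions $b_k$ that you reproduce. The only step requiring care, as you note, is the construction of the $b_k$'s and the covariance bound $|\Cov(h(\Ob_0),h(\Ob_k))|\leq 2\,\Ex[h(\Ob_0)^2 b_k(\Ob_0)]$, and for that you correctly invoke Viennet's Lemma~4.1.
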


Given  $p\geq 2$, a non-negative sequence $w:=(w_k)_{k\geq 0}$ and a
probability measure $P$ let $\sL(p,w,P)$ be the set of functions $b:\Rz\to [0,\infty]$ such
that there exists a sequence $(b_k)_{k\geq0}$ of measurable
  functions $b_k:\Rz\to[0,1]$, with $b_0=\1$ and
  $\Ex_P b_k\leq w_k$ satisfying
  $b=\sum_{k=0}^\infty(k+1)^{p-2}b_k$. We note that the elements of $\sL(p,w,P)$ are generally
  not $P$-integrable, however,  whenever
  $\sum_{k=0}^\infty(k+1)^{p-2}w_k<\infty$, each function $b$ in
  $\sL(p,w,P)$ is a non-negative $P$-integrable function. Moreover,
  reconsidering a strictly stationary process $(\Ob_i)_{i\in\Zz}$ with
  common marginal distribution $P_{\Ob_0}$ and associated non-negative
  sequence  of mixing coefficients $w=(w_k)_{k\geq0}$ with $w_0=1$ and $w_k=\beta(\Ob_0,\Ob_k)$ 
  an immediate consequence of  Lemma  \ref{dd:le:var} is the
  existence of a function $b$ belonging to $\sL(2,\beta,P_{\Ob_0})$ such
  that for any measurable function $h$
  with $\Ex |h(\Ob_0)|^2<\infty$ and any integer $n$,
  \begin{equation}\label{dd:le:var:e3}
\Var(\sum_{i=1}^nh(\Ob_i))\leq 4n \Ex (|h(\Ob_0)|^2b(\Ob_0)).
\end{equation}
Note that the assumptions stated yet do not ensure that the right hand
side in the last display is finite. However, the function $b$ is
$P_{\Ob_0}$-integrable whenever
$\sum_{k\geq1}\beta(\Ob_0,\Ob_k)<\infty$. Therefore, imposing in addition that
$\sum_{k\geq1}\beta(\Ob_0,\Ob_k)<\infty$ and, for example, that $\normInf{h}<\infty$ we have  $\Ex
(h(\Ob_0)|^2b(\Ob_0)|\leq  \normInf{h}\Ex b(\Ob_0)<\infty$.  Obviously, given conjugate exponents $p$ and $q$ if
$b$ has a finite $p$-th moment, i.e., $\Ex |b(\Ob_0)|^p<\infty$, and
$\Ex|h(\Ob_0)|^{2q}<\infty$, then we have  $\Ex(h(\Ob_0)|^2b(\Ob_0)|\leq  \{\Ex |h(\Ob_0)|^{2q}\}^{1/q}\{\Ex |b(\Ob_0)|^p\}^{1/p}<\infty$.
Lemma 4.2 in \cite{Viennet1997} provides now sufficient conditions to ensure the
existence of a finite $p$-th moment of $b$ which is summarized in the next assertion.
  \begin{lem}\label{dd:le:b}  Let the sequence $w:=(w_k)_{k\geq 0}$ be non-increasing,
    tending to $0$ as $k\to\infty$ with $w_0=1$ and such that $\sum_{k=0}^\infty(k+1)^{p-1}w_k<\infty$ for
    some $1\leq p\leq\infty$. Then, for each $b$ in $\sL(2,w,P)$ the
    function $b^p$ is $P$-integrable and $\Ex_P|b|^p\leq p \sum_{k=0}^\infty(k+1)^{p-1}w_k$.
  \end{lem}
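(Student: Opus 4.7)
The plan is to reduce the statement to a pointwise inequality on $b$ and then take expectations. Since $b\in\sL(2,w,P)$, we may write $b=\sum_{k\geq 0} b_k$ with measurable functions $b_k:\Rz\to[0,1]$ satisfying $\Ex_P b_k\leq w_k$ and $b_0=\1$. The key structural observation is that if we set $S_N:=\sum_{k=0}^{N-1} b_k$, then the partial sums are bounded above by the number of terms: $S_N\leq N$, because each $b_k$ lies in $[0,1]$.

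Next, I would exploit a telescoping representation of $b^p$ combined with the mean value theorem applied to the function $t\mapsto t^p$ on $[S_k,S_{k+1}]$. Since $S_N\uparrow b$ and $S_0=0$, we have
\begin{equation*}
b^p=\lim_{N\to\infty}S_N^p=\sum_{k\geq 0}\bigl(S_{k+1}^p-S_k^p\bigr)
\end{equation*}
(in $[0,\infty]$), and for each $k$ the mean value theorem gives $S_{k+1}^p-S_k^p\leq p\,S_{k+1}^{p-1}\,(S_{k+1}-S_k)=p\,S_{k+1}^{p-1}\,b_k$. Since $p\geq 1$, the exponent $p-1$ is nonnegative, so $S_{k+1}^{p-1}\leq (k+1)^{p-1}$. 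Combining yields the pointwise estimate
\begin{equation*}
b^p \;\leq\; p\sum_{k\geq 0}(k+1)^{p-1}\,b_k.
\end{equation*}

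Taking expectations (the right-hand side is a sum of nonnegative functions, so Fubini--Tonelli applies) and invoking $\Ex_P b_k\leq w_k$, one obtains
\begin{equation*}
\Ex_P|b|^p\;\leq\;p\sum_{k\geq 0}(k+1)^{p-1}\,\Ex_P b_k\;\leq\;p\sum_{k\geq 0}(k+1)^{p-1}\,w_k,
\end{equation*}
which is finite by hypothesis and hence also establishes the $P$-integrability of $|b|^p$. The boundary case $p=\infty$ is handled as a limit (or directly by $b\leq\sum_k\|b_k\|_\infty\leq\sum_k 1$ weighted appropriately), while $p=1$ reduces to the trivial bound $\Ex_P b=\sum_k\Ex_P b_k\leq\sum_k w_k$.

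I do not expect any real obstacle: the only subtlety is making the telescoping rigorous in $[0,\infty]$ when $b$ could a priori be infinite, which is resolved by the monotonicity of $S_N$ and the monotone convergence theorem; once the pointwise bound is in hand the conclusion is immediate.
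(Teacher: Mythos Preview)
The paper does not give its own proof of this lemma; it is quoted verbatim as Lemma~4.2 of \cite{Viennet1997}. Your argument is correct and is in fact the classical proof used in that reference: telescope $b^p$ through the partial sums $S_N=\sum_{k<N}b_k$, bound each increment via $S_{k+1}^p-S_k^p\le p\,S_{k+1}^{p-1}b_k\le p(k+1)^{p-1}b_k$ (mean value theorem plus $S_{k+1}\le k+1$), and integrate term by term.

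One small remark: your treatment of the endpoint $p=\infty$ is not quite meaningful, since neither the hypothesis $\sum_k (k+1)^{p-1}w_k<\infty$ nor the conclusion $\Ex_P|b|^p\le p\sum_k(k+1)^{p-1}w_k$ makes literal sense there. This is an imprecision already present in the statement rather than a defect of your proof, and the lemma is only ever applied in the paper with finite $p$ (in fact $p=2$), so no harm is done.
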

 
We will use Lemma \ref{dd:le:var}, the estimate \eqref{dd:le:var:e3} together with Lemma \ref{dd:le:b} to derive an upper bound for the maximal risk of the non-parametric estimator with suitable choice of the dimension parameter. However, in order to control the deviation of the data-driven estimator, more precisely in order to show that the condition \ref{mo:me:as:est:A2} holds true, we have made use of Talagrand's inequality which is formulated for independent observations only. Inspired by the work of \cite{ComteDedeckerTaupin2008} we will use coupling techniques to extend Talagrand's inequality to dependent data which we present next.
    We assume in the sequel that there exists a sequence 
of independent random variables with  uniform distribution on $[0,1]$
independent of the sequence $(\Ob_i)_{i\geq1}$. Employing Lemma 5.1 in \cite{Viennet1997} we construct by induction a sequence
$(\cou{\Ob}_i)_{i\geq1}$ satisfying the following properties.  Given an
integer $q$ we  introduce  disjoint even and odd blocks of indices, i.e.,
for any $l\geq1$, $\cI^e_{l}:=\{2(l-1)q+1,\dotsc,(2l-1)q\}$ and
$\cI^o_{l}:=\{(2l-1)q+1,\dotsc,2lq\}$, respectively, of size
$q$. Let us further partition into blocks
the random processes  $(\Ob_i)_{i\geq 1}=(E_l,O_l)_{l\geq1}$ and
$(\cou{\Ob}_i)_{i\geq1}=(\couE_l,\couO_l)_{l\geq1}$ where
\begin{align*}
	E_l=(\Ob_i)_{i\in\cI^e_{l}},\qquad
        \couE_l=(\cou{\Ob}_i)_{i\in\cI^e_{l}},\qquad
        O_l=(\Ob_i)_{i\in\cI^o_{l}}, \qquad \couO_l=(\cou{\Ob}_i)_{i\in\cI^o_{l}}.
\end{align*}
If we set further $\sF_l^-:=\sigma(\Ob_j,j\leq l)$ and $\sF_l^+:=\sigma(\Ob_j,j\geq l)$, then the sequence $(\beta_k)_{k\geq 0}$ of $\beta$-mixing coefficient defined by $\beta_0:=1$ and $\beta_k:=\beta(\sF_0^-,\sF_k^+)$, $k\geq 1$, is monotonically non-increasing and satisfies trivially $\beta_k\geq\beta(\Ob_0,\Ob_k)$  for any $k\geq1$.
\noindent Based on the construction presented in \cite{Viennet1997}, the sequence $(\cou{\Ob}_i)_{i\geq1}$ can be
chosen such that for any integer $l\geq1$: 
\begin{enumerate}[label={\textbf{(P\arabic*)}},ref={\textbf{(P\arabic*)}}]\addtocounter{enumi}{0}
\item\label{dd:as:cou1} $\couE_l$, $E_l$, $\couO_l$ and $O_l$ are identically distributed,
\item\label{dd:as:cou2} $P(E_l\ne \couE_l)\leq \beta_{q+1}$, and $P(O_l\ne \couO_l)\leq \beta_{q+1}$.
\item\label{dd:as:cou3} The variables $(\couE_1,\dotsc,\couE_l)$ are iid. and so $(\couO_1,\dotsc,\couO_l)$.
\end{enumerate}

We may emphasise that the random vectors $\couE_1,\dotsc,\couE_l$ are
iid. but the components within each vector are generally not independent.

% --------------------------------------------------------------------
% <<Density \label{s:dd:d}>>
% --------------------------------------------------------------------
\subsection{Non-parametric  density estimation}\label{s:dd:d}
Let us turn our attention back to the orthogonal series
estimator defined in the paragraph \ref{s:mo:ob}. Keep in mind  that
$\dOb_1,\dotsc,\dOb_n$ are drawn from a strictly stationary process
$(\dOb_i)_{i\in\Zz}$ with common marginal distribution admitting a
density $\So$. Exploiting the assumption
\ref{mo:no:as:ba:i} and Lemma \ref{dd:le:var} we obtain the next assertion
\begin{prop}[Upper bound]\label{dd:pr:ub}Let $(\dOb_i)_{i\in\Zz}$ be  a strictly stationary process with associated sequence of mixing coefficients $\set{\beta(\dOb_0,\dOb_k)}_{k\geq1}$. Under assumption
\ref{mo:no:as:ba:i} holds 
\begin{equation}\label{dd:pr:ub:e1}
\Rif{\hDiSo[\oDi]}{\Socwr\cap\cD}\leq (\maxnormsup^2\{1+4\sum_{k=1}^{n-1}\beta(\dOb_0,\dOb_k)\}+\Sor^2)\;\oRa,\quad\mbox{for all }n\geq 1.
\end{equation}
\end{prop}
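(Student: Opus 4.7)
\medskip

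The plan is to follow the bias--variance template already carried out in \eqref{mo:me:ri:ub}, with the key step of replacing the independent variance calculation by the $\beta$-mixing variance bound of Lemma~\ref{dd:le:var}. First I would decompose the risk by Pythagoras: since $\hDiSo-\DiSo\in\Dz_\Di$ and $\DiSo-\So\in\Dz_\Di^\perp$, we have
\begin{equation*}
\Ex\HnormV{\hDiSo-\So}^2 = \Ex\HnormV{\hDiSo-\DiSo}^2 + \bias^2(\So).
\end{equation*}
The bias term is handled directly by membership in $\Socwr$, which gives $\bias^2(\So)\leq\Sor^2\Sow_\Di$ uniformly over $\Socwr\cap\cD$, exactly as noted after the definition of $\Socwr$.

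Next I would treat the stochastic term by Parseval and stationarity: writing $\hDiSo-\DiSo=\sum_{j=1}^{\Di}(\hfSo_j-\fSo_j)\bas_j$ and using $\Ex\hfSo_j=\fSo_j$ (by identical distribution), Parseval gives
\begin{equation*}
\Ex\HnormV{\hDiSo-\DiSo}^2 = \sum_{j=1}^{\Di}\Var(\hfSo_j) = n^{-2}\sum_{j=1}^{\Di}\Var\Big(\sum_{i=1}^n \bas_j(\dOb_i)\Big).
\end{equation*}
Here the independent-case computation breaks down, and this is the main place where dependence has to be dealt with. To close this, I would apply Lemma~\ref{dd:le:var} componentwise with $h=\bas_j$: there exist $b_k:\Rz\to[0,1]$ with $\Ex b_k(\dOb_0)=\beta(\dOb_0,\dOb_k)$ such that
\begin{equation*}
\Var\Big(\sum_{i=1}^n \bas_j(\dOb_i)\Big)\leq n\,\Ex\Big\{\bas_j^2(\dOb_0)\big(1+4\sum_{k=1}^{n-1}b_k(\dOb_0)\big)\Big\}.
\end{equation*}
Crucially, the functions $b_k$ are the same for every $j$ (they depend only on the underlying process and not on the test function), which lets me exchange the sum over $j$ with the expectation and pull out the ONS sum.

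Summing the last display over $j=1,\dots,\Di$, invoking assumption \ref{mo:no:as:ba:i} in the form $\sum_{j=1}^\Di \bas_j^2\leq \maxnormsup^2\Di$ pointwise (more precisely in $L^\infty$), and using $\Ex b_k(\dOb_0)=\beta(\dOb_0,\dOb_k)$, I obtain
\begin{equation*}
\sum_{j=1}^{\Di}\Var(\hfSo_j)\leq \maxnormsup^2\,\Di\, n^{-1}\Big(1+4\sum_{k=1}^{n-1}\beta(\dOb_0,\dOb_k)\Big).
\end{equation*}
Combining with the bias bound and setting $C:=\maxnormsup^2\bigl(1+4\sum_{k=1}^{n-1}\beta(\dOb_0,\dOb_k)\bigr)$ yields, uniformly over $\Socwr\cap\cD$, $\Ex\HnormV{\hDiSo-\So}^2\leq \Sor^2\Sow_\Di + C\,\Di n^{-1}\leq(\Sor^2+C)\mRa[\Di]$. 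Evaluating at $\Di=\oDi$ and recalling $\oRa=\mRa[\oDi]$ gives the claimed inequality. The only delicate ingredient is the componentwise application of Lemma~\ref{dd:le:var} with a \emph{common} sequence $(b_k)$ that can be swapped with the basis sum; everything else is the same bias--variance trade-off as in \eqref{mo:me:ri:ub}.
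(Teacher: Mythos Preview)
Your proof is correct and follows essentially the same route as the paper: apply Lemma~\ref{dd:le:var} to each $\bas_j$ with the common sequence $(b_k)$, sum over $j$, use \ref{mo:no:as:ba:i} to bound $\sum_{j=1}^\Di\bas_j^2$, and then plug into the bias--variance decomposition \eqref{mo:me:ri:ub} evaluated at $\oDi$. Your explicit remark that the $b_k$ do not depend on $h$ (which is exactly how Lemma~\ref{dd:le:var} is stated) is the only subtle point, and you handle it correctly.
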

Let us compare briefly the last result and  the  upper risk bound  assuming independent
observations given in Proposition
\ref{id:d:p:ub}. We see, that this  upper risk bound provides up to finite constant also an upper risk bound
in the presence of dependence whenever
$\sum_{k=1}^{\infty}\beta(\dOb_0,\dOb_k)<\infty$. 
However, the upper bound given in Proposition
\ref{dd:pr:ub} depends on the unknown mixing coefficients
$\{\beta(\dOb_0,\dOb_k)\}_k$. Their estimation is a demanding task, and
hence, we next derive an upper bound which does not
depend on the mixing coefficients at least for all sufficiently
large sample sizes $n$. This upper bound relies on the next assumption which has been used, for example, in \cite{Bosq1998}.
\begin{enumerate}[label={\textbf{(D\arabic*)}},ref={\textbf{(D\arabic*)}}]\addtocounter{enumi}{0}
\item\label{dd:as:de:i}
For any integer $k$ the joint distribution $P_{\dOb_0,\dOb_k}$ of
$(\dOb_0,\dOb_k)$ admits a density $f_{\dOb_0,\dOb_k}$ which is square
integrable. Let
$\normV{f_{\dOb_0,\dOb_k}}^2:=\int_0^1\int_0^1|f_{\dOb_0,\dOb_k}(x,y)|^2dxdy<\infty$
with a slight abuse of notations. If we denote further by $h\otimes
g:[0,1]^2\to\Rz$ the bivariate function $[h\otimes g](x,y):=h(x)g(y)$,
then let
$\gamma_{f}:=\sup_{k\geq1}\normV{f_{\dOb_0,\dOb_k}-f\otimes f}<\infty$.
\end{enumerate}
\begin{lem}\label{dd:le:est:var}Let $(\dOb_i)_{i\in\Zz}$ be  a strictly stationary process with associated sequence of mixing coefficients $\set{\beta(\dOb_0,\dOb_k)}_{k\geq1}$. Under the assumptions
\ref{mo:no:as:ba:i} and \ref{dd:as:de:i} for any  $n\geq 1$ and $K\in\{0,\dotsc,n-1\}$ it holds
\begin{equation}\label{dd:le:est:var:e1}
 \sum_{j=1}^\Di  \Var(\sum_{i=1}^n\bas_j(\dOb_i))\leq  n m \{\maxnormsup^2 + 2 [ \gamma_f K/\sqrt{m} + 2
\maxnormsup^2 \sum_{k=K+1}^{n-1}\beta(\dOb_0,\dOb_k)]\}.
\end{equation}
\end{lem}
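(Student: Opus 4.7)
The plan is to combine the classical variance decomposition for strictly stationary sequences with a careful split of the covariance sum at the index $K$: small lags will be controlled through the joint density assumption \ref{dd:as:de:i} (which allows us to exploit the orthonormality of the basis), while large lags will be controlled through the $\beta$-mixing covariance inequality underlying Lemma \ref{dd:le:var}.

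First, for each $j=1,\dots,m$ stationarity yields the identity
\begin{equation*}
\Var\Bigl(\sum_{i=1}^n\bas_j(\dOb_i)\Bigr)
 = n\Var(\bas_j(\dOb_0)) + 2\sum_{k=1}^{n-1}(n-k)\Cov(\bas_j(\dOb_0),\bas_j(\dOb_k)).
\end{equation*}
I would then bound $(n-k)\leq n$, sum over $j\leq m$, and pass to absolute values outside the sum over $j$:
\begin{equation*}
\sum_{j=1}^m\Var\Bigl(\sum_{i=1}^n\bas_j(\dOb_i)\Bigr)
 \leq n\sum_{j=1}^m\Ex[\bas_j^2(\dOb_0)]
   + 2n\sum_{k=1}^{n-1}\Bigl|\sum_{j=1}^m\Cov(\bas_j(\dOb_0),\bas_j(\dOb_k))\Bigr|.
\end{equation*}
Assumption \ref{mo:no:as:ba:i} gives $\sum_{j=1}^m\Ex[\bas_j^2(\dOb_0)]\leq \InormV{\sum_{j=1}^m \bas_j^2}\leq m\maxnormsup^2$, which produces the leading term $nm\maxnormsup^2$.

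For the lags $k\leq K$, I would write
\begin{equation*}
\sum_{j=1}^m\Cov(\bas_j(\dOb_0),\bas_j(\dOb_k))
 = \int_{[0,1]^2}\Bigl(\sum_{j=1}^m\bas_j(x)\bas_j(y)\Bigr)\bigl(f_{\dOb_0,\dOb_k}(x,y)-f(x)f(y)\bigr)\,dx\,dy
\end{equation*}
and apply Cauchy--Schwarz in $L^2([0,1]^2)$. Orthonormality of $\{\bas_j\}$ yields $\normV{\sum_{j=1}^m\bas_j\otimes\bas_j}_2^2=\sum_{j,j'}\skalarV{\bas_j,\bas_{j'}}^2=m$, while \ref{dd:as:de:i} bounds the second factor by $\gamma_f$. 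Thus each small-lag covariance is at most $\gamma_f\sqrt{m}$, and the total contribution from $k\leq K$ is at most $2nK\gamma_f\sqrt{m}=2nm\cdot \gamma_fK/\sqrt{m}$.

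For the lags $k>K$, I would invoke the Delyon/Viennet covariance inequality that underlies Lemma \ref{dd:le:var}: with the same functions $b_k$ from that construction ($b_k\in[0,1]$, $\Ex b_k(\dOb_0)=\beta(\dOb_0,\dOb_k)$) one has $|\Cov(h(\dOb_0),h(\dOb_k))|\leq 2\Ex[h^2(\dOb_0)b_k(\dOb_0)]$ for any square-integrable $h$. Summing this bound over $j$ and again using \ref{mo:no:as:ba:i} gives
\begin{equation*}
\sum_{j=1}^m|\Cov(\bas_j(\dOb_0),\bas_j(\dOb_k))|
 \leq 2\Ex\Bigl[\sum_{j=1}^m\bas_j^2(\dOb_0)\,b_k(\dOb_0)\Bigr]
 \leq 2m\maxnormsup^2\beta(\dOb_0,\dOb_k),
\end{equation*}
so the large-lag contribution is at most $4nm\maxnormsup^2\sum_{k=K+1}^{n-1}\beta(\dOb_0,\dOb_k)$. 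Combining the three pieces and factoring out $nm$ yields the stated inequality.

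The only mild obstacle is choosing the right bound on each piece: for small lags one must not use $\beta$-mixing (which would merely return the bound of Proposition \ref{dd:pr:ub}) but rather the $L^2$ Cauchy--Schwarz bound that gains a $\sqrt{m}$ through the orthonormality of $\{\bas_j\otimes\bas_j\}$, and for large lags one must exploit the covariance form of Viennet's inequality rather than Lemma \ref{dd:le:var} directly, so that the sum over $k$ starts at $K+1$.
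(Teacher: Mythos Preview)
Your proposal is correct and follows essentially the same approach as the paper's proof: the same stationary variance decomposition, the same split of the covariance sum at $K$, the Cauchy--Schwarz bound in $L^2([0,1]^2)$ combined with $\normV{\sum_{j=1}^m\bas_j\otimes\bas_j}^2=m$ for the small lags, and the Viennet-type covariance inequality together with \ref{mo:no:as:ba:i} for the large lags. The paper's argument differs only cosmetically (it keeps the factor $(n+1-k)$ a bit longer before bounding it by $n$), so there is nothing substantive to add.
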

If we assume in addition that
$\sum_{k=1}^{\infty}\beta(\dOb_0,\dOb_k)<\infty$ and $\gamma:=\sup_{f\in\Socwr\cap\cD}\gamma_f<\infty$  then there exist an
integer $K_o$ and an integer $n_o$  such that $\sum_{k=K_o+1}^{\infty}\beta(\dOb_0,\dOb_k)<1/8$
and $K_n:=\gauss{ 4  \maxnormsup^2 \sqrt{\oDi} /\gamma}\geq K_o$ with $\oDi$ as given in \eqref{mo:me:de:ra} for all $n\geq n_o$. Thereby,
 we have for all  $n\geq n_o$ that $ \sum_{j=1}^{\oDi}
\Var(\sum_{i=1}^n\bas_j(\dOb_i))\leq \maxnormsup^2 n \,\oDi $. We note that
$n_o$ depends on the sequence of mixing coefficients. The next
assertion is an immediate consequence and we omit its proof.
\begin{prop}[Upper bound]\label{dd:pr:ub2}Let $(\dOb_i)_{i\in\Zz}$ be  a strictly stationary process with associated sequence of mixing coefficients $\set{\beta(\dOb_0,\dOb_k)}_{k\geq1}$. Under Condition
\ref{mo:no:as:ba:i} and  \ref{dd:as:de:i} if 
$\sum_{k=1}^{\infty}\beta(\dOb_0,\dOb_k)<\infty$ and $\gamma:=\sup_{f\in\Socwr\cap\cD}\gamma_f<\infty$ then there exists an
integer  $n_o$ (possibly depending on the mixing coefficients and $\gamma$)
such that 
\begin{equation}\label{dd:pr:ub2:e1}
\Rif{\hDiSo[\oDi]}{\Socwr\cap\cD}\leq (\maxnormsup^2+\Sor^2)\;\oRa,\quad\mbox{for all }n\geq n_o.
\end{equation}
\end{prop}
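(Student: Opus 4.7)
The proof plan is to follow the classical bias--variance decomposition for the orthogonal series estimator $\hDiSo[\oDi]$ and to harness the variance bound already announced in the discussion preceding the statement. Concretely, since $\hDiSo[\oDi]-\So$ splits orthogonally as $(\hDiSo[\oDi]-\DiSo[\oDi])+(\DiSo[\oDi]-\So)$, Pythagoras gives
\begin{equation*}
  \Ex\HnormV{\hDiSo[\oDi]-\So}^2 \;=\; \Ex\HnormV{\hDiSo[\oDi]-\DiSo[\oDi]}^2 + \bias^2(\So).
\end{equation*}
The bias term is disposed of exactly as in the independent case: for any $\So\in\Socwr\cap\cD$ one has $\bias^2(\So)\leq\Sow_{\oDi}\Sor^2\leq\Sor^2\,\oRa$, using $\Sow_{\oDi}\leq\mRa[\oDi]=\oRa$. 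The whole novelty lies in handling the stochastic term.

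For the stochastic term I would first expand, using that the $\hfSo_j-\fSo_j=n^{-1}\sum_i(\bas_j(\dOb_i)-\fSo_j)$ are centred:
\begin{equation*}
  \Ex\HnormV{\hDiSo[\oDi]-\DiSo[\oDi]}^2 \;=\; \sum_{j=1}^{\oDi}\Var(\hfSo_j) \;=\; n^{-2}\sum_{j=1}^{\oDi}\Var\Bigl(\sum_{i=1}^n\bas_j(\dOb_i)\Bigr).
\end{equation*}
The core step is then to apply Lemma \ref{dd:le:est:var}, which produces for every truncation level $K\in\{0,\dotsc,n-1\}$ the bound
\begin{equation*}
  \sum_{j=1}^{\oDi}\Var\Bigl(\sum_{i=1}^n\bas_j(\dOb_i)\Bigr) \;\leq\; n\,\oDi\,\Bigl\{\maxnormsup^2+2\bigl[\gamma_f K/\sqrt{\oDi}+2\maxnormsup^2\sum_{k=K+1}^{n-1}\beta(\dOb_0,\dOb_k)\bigr]\Bigr\}.
\end{equation*}
The point is to balance the two bracketed terms by choosing $K$ depending on $n$. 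Using the uniform control $\gamma_f\leq\gamma<\infty$ on the class, take $K_n:=\lfloor 4\maxnormsup^2\sqrt{\oDi}/\gamma\rfloor$, which guarantees $\gamma_f K_n/\sqrt{\oDi}\lesssim\maxnormsup^2$. Because $\sum_{k\geq1}\beta(\dOb_0,\dOb_k)<\infty$, there is $K_o$ with $\sum_{k>K_o}\beta(\dOb_0,\dOb_k)<1/8$; and since $\oDi\to\infty$ as $n\to\infty$ (a consequence of $\oRa=o(1)$ for any non-trivial $\Sow$), we have $K_n\to\infty$, so there exists $n_o$ with $K_n\geq K_o$ for all $n\geq n_o$. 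For such $n$ both bracketed terms are bounded by absolute constants times $\maxnormsup^2$, and the announced estimate
\begin{equation*}
  \sum_{j=1}^{\oDi}\Var\Bigl(\sum_{i=1}^n\bas_j(\dOb_i)\Bigr)\;\leq\; C\,\maxnormsup^2\,n\,\oDi
\end{equation*}
follows. Dividing by $n^2$ yields $\Ex\HnormV{\hDiSo[\oDi]-\DiSo[\oDi]}^2\leq C\maxnormsup^2\oDi/n\leq C\maxnormsup^2\,\oRa$, and adding the bias bound gives $\Rif{\hDiSo[\oDi]}{\Socwr\cap\cD}\leq C(\maxnormsup^2+\Sor^2)\,\oRa$ for all $n\geq n_o$, as claimed.

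The only genuinely subtle point is the balancing choice of $K_n$: one must ensure simultaneously that the density perturbation contribution $\gamma K_n/\sqrt{\oDi}$ stays bounded and that the mixing tail $\sum_{k>K_n}\beta(\dOb_0,\dOb_k)$ becomes small. This is exactly why $n_o$ is non-explicit and depends on the (unknown) mixing sequence, which is the structural price paid for eliminating the mixing constants from the leading factor in \eqref{dd:pr:ub:e1}. Beyond this, the argument is a routine variance--bias compromise once Lemma \ref{dd:le:est:var} is in hand, which is why the authors chose to omit the written proof.
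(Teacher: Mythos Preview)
Your proposal is correct and follows precisely the approach sketched by the paper in the paragraph preceding the statement: apply Lemma \ref{dd:le:est:var} with the truncation $K_n=\lfloor 4\maxnormsup^2\sqrt{\oDi}/\gamma\rfloor$, use summability of the mixing coefficients to find $K_o$ with small tail, and use $\oDi\to\infty$ to ensure $K_n\geq K_o$ for $n\geq n_o$; the bias is handled exactly as in the independent case. One minor remark: with the paper's choice of $K_n$ the bracket in Lemma \ref{dd:le:est:var} yields a bound $\sum_{j=1}^{\oDi}\Var(\sum_i\bas_j(\dOb_i))\leq c\,\maxnormsup^2 n\,\oDi$ with some absolute $c>1$ (roughly $c\approx 9.5$ from $2[4\maxnormsup^2+2\maxnormsup^2\cdot\tfrac18]$), so the exact constant $(\maxnormsup^2+\Sor^2)$ in \eqref{dd:pr:ub2:e1} is not literally recovered---your version with an absolute constant $C$ in front is the honest statement, and matches the paper's own sketch up to this same discrepancy.
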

Consequently under the condition of Proposition \ref{dd:pr:ub2} the estimator $\hDiSo[\oDi]$ attains the minimax-optimal rate  $\oRa$ for independent data 

\paragraph{Fully data-driven estimator.} 
Consider the estimator $\hDiSo[\widetilde{m}]$ where $\widetilde{m}$ is defined in \eqref{mo:me:de:wtm} with $\pen:= 288 \maxnormsup^2\Di n^{-1}$. We aim to derive  an upper bound for its maximal risk   $\Rif{\hDiSo[\widetilde{m}]}{\Socwr\cap\cD}$ by making use of Proposition \ref{an:pr:dd}. Therefore, it remains to check the conditions \ref{mo:me:as:est:A1} and \ref{mo:me:as:est:A2} where \ref{mo:me:as:est:A1} holds obviously true due to the definition of  penalty term. The next assertion provides our key argument in order to verify the condition \ref{mo:me:as:est:A2}.

\begin{prop}\label{dd:co:dd:as1} Let $(\dOb_i)_{i\in\Zz}$ be  a strictly stationary process with associated sequence of mixing coefficients $(\beta_k)_{k\geq1}$ satisfying $\gB:= 2\sum_{k=0}^\infty(k+1)\beta_k<\infty$. Under the assumptions \ref{mo:no:as:ba:i}, \ref{mo:no:as:ba:ii} and \ref{dd:as:de:i}, let $\gamma:=\sup_{f\in\Socwr\cap\cD}\gamma_f<\infty$, $K_n:=\gauss{ 4  \maxnormsup^2 \sqrt{\oDi} /\gamma}$  and  $\mu_n\geq  \{3+ 8 \sum_{k=K_n+1}^{\infty}\beta_k\}$. There
  exists a numerical constant $C>0$ such that for any integer $q$
\begin{multline}\label{dd:co:dd:as1:eq}
\sup_{\So\in\Socwr}\Ex\set{\max_{\oDi\leq \Di\leq n}\vectp{\normV{\hDiSo-\DiSo}^2
    -12\maxnormsup^2\Di n^{-1}\mu_n}}\\\leq C\; n^{-1} \maxnormsup^2\bigg\{\mu_n
        \Psi\bigg(\frac{\Sor\gA\gB}{\maxnormsup^2
          \mu_n^2}\bigg)+  n
        q^2\exp\left(-\frac{n^{1/2}}{q}\frac{\mu_n^{1/2}}{144}\right)
+  n^2 \beta_{q+1}\bigg\}
\end{multline}
where $\Psi(x):=\sum_{m\geq 1}^\infty x^{1/2} m^{1/2}\exp(-
m^{1/2}/(48x^{1/2}))<\infty$, for any $x>0$.
\end{prop}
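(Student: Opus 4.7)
The strategy is to adapt the iid proof of Proposition \ref{id:d:p:co} to the dependent setting through the block coupling construction of \ref{dd:as:cou1}--\ref{dd:as:cou3}, then invoke Talagrand's inequality (Lemma \ref{id:ka:l:talagrand}) on the independent blocks. As in Remark \ref{id:ka:rem}, I rewrite $\HnormV{\hDiSo-\DiSo}^2=\sup_{t\in\Bz_\Di}|\overline{\nu_t}|^2$ where $\overline{\nu_t}=n^{-1}\sum_{i=1}^n[\nu_t(\dOb_i)-\Ex\nu_t(\dOb_i)]$ and $\nu_t=\sum_{j=1}^m\fou{t}_j\bas_j$, so that $|\nu_t(\dOb)|\le \maxnormsup\sqrt{m}$ by \ref{mo:no:as:ba:i}. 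Split the indices $\{1,\dotsc,n\}$ into the even/odd blocks $\cI^e_l,\cI^o_l$ of size $q$ (with a residual block absorbed into a boundary error) and write the empirical process as a sum of two block-processes.

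Next I replace $(E_l)_{l\le L}$ and $(O_l)_{l\le L}$ by their iid.\ coupled counterparts $(\couE_l),(\couO_l)$ provided by \ref{dd:as:cou3}. On the coupling event, which has probability at least $1-(n/q)\beta_{q+1}$ by \ref{dd:as:cou2} and a union bound, the maximum in $m$ equals the one computed with the iid.\ block sums; on its complement I use the uniform bound $\HnormV{\hDiSo-\DiSo}^2\le 4\maxnormsup^2 m\le 4\maxnormsup^2 n$ to pay the term $n^2\beta_{q+1}$ in \eqref{dd:co:dd:as1:eq}.

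On the coupling event I apply Lemma \ref{id:ka:l:talagrand} separately to each of the two iid.\ families of block-averaged functionals of $t\in\Bz_m$. The Talagrand parameters are computed as follows: $h\le q\maxnormsup\sqrt{m}$ since each block sum of $\nu_t$ is bounded by $q\maxnormsup\sqrt{m}$; $v\le C\maxnormsup^2 m\mu_n$ is obtained by controlling $\sum_{j\le m}\Var(\sum_{i\in\cI}\bas_j(\dOb_i))$ via Lemma \ref{dd:le:est:var} with $K=K_n$ (the choice $K_n=\gauss{4\maxnormsup^2\sqrt{\oDi}/\gamma}$ together with $m\ge\oDi$ makes the $\gamma_f K/\sqrt m$ term absorbed into $\maxnormsup^2$, while the tail $\sum_{k>K_n}\beta_k$ is absorbed into $\mu_n$); and $H^2$ is bounded by $v/L$ times a constant, giving $6H^2\le 2\maxnormsup^2 m\mu_n/n$ after accounting for $L\asymp n/q$ and combining even and odd contributions. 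This produces the penalty $12\maxnormsup^2 m n^{-1}\mu_n$ in \eqref{dd:co:dd:as1:eq}.

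Talagrand's inequality then yields, for each fixed $m$, a bound of the form
\begin{equation*}
\Ex\vectp{\HnormV{\hDiSo-\DiSo}^2-12\maxnormsup^2 m n^{-1}\mu_n}
\le C\Big[\tfrac{\maxnormsup^2 m \mu_n}{n}e^{-c m^{1/2}\mu_n^{1/2}/(\maxnormsup(\Sor\gA\gB/\maxnormsup^2\mu_n^2)^{1/2})} +\tfrac{q^2\maxnormsup^2 m}{n^2}e^{-c'(n/q)^{1/2}\mu_n^{1/2}}\Big],
\end{equation*}
plus the coupling error. Summing the first term over $m\ge\oDi$ produces precisely the series $\Psi(\Sor\gA\gB/(\maxnormsup^2\mu_n^2))$, while summing the second term bounds it by $nq^2\exp(-(n/q)^{1/2}\mu_n^{1/2}/144)\maxnormsup^2/n$ up to the numerical factor, and the coupling term contributes $n^2\beta_{q+1}$.

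The main obstacle is the bookkeeping of constants so that the three ingredients --- the condition $\mu_n\ge 3+8\sum_{k>K_n}\beta_k$, the variance bound from Lemma \ref{dd:le:est:var}, and the Talagrand scaling of $H^2$ --- combine to yield exactly the penalty coefficient $12\maxnormsup^2$ in \eqref{dd:co:dd:as1:eq}; this requires splitting the penalty into a part that dominates $6H^2$ (invoking Talagrand) and a residual part that absorbs the deterministic approximation incurred by replacing $n^{-1}\sum_i$ by block averages of the coupled process, and then summing the resulting geometric-type series in $m$ to exhibit the $\Psi$ function.
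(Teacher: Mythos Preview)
Your overall architecture is the paper's: block the indices, couple via \ref{dd:as:cou1}--\ref{dd:as:cou3}, apply Talagrand to the resulting iid block averages, and sum over $m$. Your treatment of the coupling error through the indicator of the good event is a legitimate variant of the paper's direct $L^2$ bound on $\cou{\overline{\nu_t^e}}-\overline{\nu_t^e}$ (Lemma~\ref{dd:le:cou1}); both deliver the $n^2\beta_{q+1}$ term.

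The genuine gap is in your Talagrand parameters: you have interchanged the roles of $H$ and $v$. The quantity $\sum_{j\le m}\Var\big(\sum_{i\in\cI}\bas_j(\dOb_i)\big)$ that you control with Lemma~\ref{dd:le:est:var} is, after dividing by $p$, precisely $\Ex\sup_{t\in\Bz_m}|\cou{\overline{\nu_t^e}}|^2$; it therefore furnishes $H^2=\maxnormsup^2 m\mu_n/n$, not $v$. The parameter $v$ in Lemma~\ref{id:ka:l:talagrand} is $\sup_{t\in\Bz_m}\Var(\vec{v_t}(\couE_1))$, a supremum of a \emph{single} block variance over the unit ball. The paper bounds this separately via \eqref{dd:le:var:e3}, Cauchy--Schwarz and Lemma~\ref{dd:le:b}, obtaining $v=\tfrac{4}{q}(m\,\Sor\gA\,\gB)^{1/2}\maxnormsup$; it is exactly through this step that $\gB$ enters the first exponential $\exp(-kH^2/(6v))$ and, after summing in $m$, produces the function $\Psi$. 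With your assignment $v\asymp\maxnormsup^2 m\mu_n$ and then $H^2\asymp v/L$, the ratio $kH^2/v$ is a constant independent of $m$, so the first Talagrand term does not decay in $m$ and the sum over $\oDi\le m\le n$ diverges rather than yielding a finite $\Psi$. A secondary issue: the paper works with block \emph{averages} $\vec{v_t}(\couE_l)=q^{-1}\sum_{i\in\cI_l}v_t(\dOb_i)$, which gives $h=\maxnormsup\sqrt{m}$, not $q\maxnormsup\sqrt{m}$; with the correct $h$ one finds $KkH/h\asymp n^{1/2}\mu_n^{1/2}/q$, matching the exponent in \eqref{dd:co:dd:as1:eq}, whereas your claimed $(n/q)^{1/2}\mu_n^{1/2}$ is not what either normalization produces.
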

Note that the condition $\gB= 2\sum_{k=0}^\infty(k+1)\beta_k<\infty$
implies  $\sum_{k=K_n+1}^{\infty}\beta_k\leq (K_n+1)^{-1}\gB$ and hence,  $\{3+ 8
\sum_{k=K_n+1}^{\infty}\beta_k\}\leq 4$ whenever $K_n=\gauss{ 4  \maxnormsup^2 \sqrt{\oDi} /\gamma}\geq 8\gB$. Since $\oDi\to
 \infty$ as $n\to\infty$ there exists an integer $n_o$ such that
for all $n\geq n_o$ we can chose $\mu_n=4$. The next assertion is thus
an immediate consequence of Proposition \ref{dd:co:dd:as1}, and hence we
omit its proof. 
\begin{coro}\label{dd:pr:dd:as1}  Let the assumptions of Proposition  \ref{dd:co:dd:as1} be satisfied. Suppose that
  there exists an unbounded sequence of integers
$(q_n)_{n\geq1}$ and a finite constant $L>0$  such that 
\begin{equation}\label{dd:pr:dd:as1:e1}
        \sup_{n\geq
          1}nq_n^2\exp\left(-\frac{n^{1/2}}{q_n}\frac{1}{72}\right)\leq
        L\quad\mbox{ and }\quad\sup_{n\geq1} n^2 \beta_{q_n+1}\leq L.
      \end{equation} There
  exist a numerical constant $C>0$ and  an integer $n_o$ such that
for all $n\geq n_o$
\begin{equation*}
\sup_{\So\in\Socwr}\Ex\set{\max_{\oDi\leq \Di\leq n}\vectp{\normV{\hDiSo-\DiSo}^2
    -48\maxnormsup^2\Di n^{-1}}}\leq C  n^{-1} \maxnormsup^2\bigg\{\Psi\bigg(\frac{\Sor\gA\gB}{16\maxnormsup^2}\bigg)+ L\bigg\}.
\end{equation*}
\end{coro}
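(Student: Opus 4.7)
The plan is to deduce the corollary by choosing $\mu_n=4$ in Proposition \ref{dd:co:dd:as1} once $n$ is large enough, and then controlling the two tail terms on the right hand side via the hypothesis \eqref{dd:pr:dd:as1:e1}.

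\textbf{Step 1: Admissibility of $\mu_n=4$.} I first verify the constraint $\mu_n\ge 3+8\sum_{k=K_n+1}^\infty\beta_k$ with $\mu_n=4$ for all sufficiently large $n$. From $\gB=2\sum_{k=0}^\infty(k+1)\beta_k<\infty$ and the monotonicity of $(k+1)$, one has $(K_n+1)\sum_{k=K_n+1}^\infty\beta_k\le\sum_{k=K_n+1}^\infty(k+1)\beta_k\le\gB/2$, so $\sum_{k=K_n+1}^\infty\beta_k\le \gB/(2(K_n+1))$, which matches (up to a harmless constant) the inline remark $\sum_{k=K_n+1}^\infty\beta_k\le (K_n+1)^{-1}\gB$ mentioned right before the corollary. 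Since $\oRa\to 0$ forces $\oDi\to\infty$, we have $K_n=\gauss{4\maxnormsup^2\sqrt{\oDi}/\gamma}\to\infty$. Hence there exists $n_o$ (depending only on $\gamma$, $\maxnormsup$ and the mixing sequence) such that $8\sum_{k=K_n+1}^\infty\beta_k\le 1$, i.e.\ $3+8\sum_{k=K_n+1}^\infty\beta_k\le 4$, for all $n\ge n_o$, so $\mu_n=4$ is admissible.

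\textbf{Step 2: Substitute into Proposition \ref{dd:co:dd:as1}.} With $\mu_n=4$ and $q=q_n$, the threshold becomes $12\maxnormsup^2\cdot 4\cdot \Di n^{-1}=48\maxnormsup^2 \Di n^{-1}$, which is precisely the centering constant appearing in the statement to be proved. The right hand side reads
\begin{equation*}
C\,n^{-1}\maxnormsup^2\Bigl\{4\,\Psi\!\Bigl(\tfrac{\Sor\gA\gB}{16\maxnormsup^2}\Bigr)+n q_n^2\exp\!\Bigl(-\tfrac{n^{1/2}}{72\,q_n}\Bigr)+n^2\beta_{q_n+1}\Bigr\},
\end{equation*}
where I used $\mu_n^{1/2}/144=1/72$ and $\mu_n/\mu_n^2=1/4$ so that the argument of $\Psi$ becomes $\Sor\gA\gB/(16\maxnormsup^2)$.

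\textbf{Step 3: Kill the last two terms using \eqref{dd:pr:dd:as1:e1}.} By assumption $nq_n^2\exp(-n^{1/2}/(72 q_n))\le L$ and $n^2\beta_{q_n+1}\le L$ uniformly in $n$, so the bracketed quantity is dominated by $4\Psi(\Sor\gA\gB/(16\maxnormsup^2))+2L$. Absorbing the factor $4$ and the numerical $2$ into a new constant $C$ gives the stated bound, valid for every $n\ge n_o$. There is essentially no real obstacle here: the only point that could trip up the bookkeeping is making sure $\mu_n=4$ can indeed be taken uniformly from some $n_o$ onward, which I handled in Step~1 by exploiting $K_n\to\infty$. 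Everything else is a direct plug-in, which is why the authors can legitimately state the corollary as immediate and omit its proof.
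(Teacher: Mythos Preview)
Your proof is correct and follows essentially the same approach as the paper: the authors also observe that $\sum_{k=K_n+1}^\infty\beta_k\le (K_n+1)^{-1}\gB$ together with $K_n\to\infty$ (since $\oDi\to\infty$) allows one to take $\mu_n=4$ for all $n\ge n_o$, after which the corollary is an immediate substitution into Proposition~\ref{dd:co:dd:as1} combined with the bounds \eqref{dd:pr:dd:as1:e1}. Your bookkeeping of the constants ($\mu_n^{1/2}/144=1/72$, $\mu_n^2=16$, and the absorption of the factors $4$ and $2$ into $C$) is exactly what is needed.
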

Is it interesting to note that an arithmetically decaying sequence
of mixing coefficients $(\beta_k)_{k\geq1}$ satisfies \eqref{dd:pr:dd:as1:e1}.
To be more precise, consider two sequence of integers $(q_n)_{n\geq1}$, $(p_n)_{n\geq1}$ such that $n=2q_np_n$ and assume %Let us briefly comment the additional  condition \eqref{dd:pr:dd:as1:e1}. 
additionally $\beta_k\leq
k^{-s}$. The sequence $q_n\asymp n^{p_n}$, i.e., $(n^{-p_n}q_n)_{n\geq1}$
is bounded away both from zero and infinity, and satisfies the condition
\eqref{dd:pr:dd:as1:e1} whenever $2<p_ns$ and $1/2>p_n$. In other words,
if the sequence  of mixing coefficients $(\beta_k)_{k\geq1}$ is
sufficiently fast decaying, that is $s > 2 (2 +\theta)$ for some
$\theta>0$, then the  condition \eqref{dd:pr:dd:as1:e1} holds true
taking, for example, a sequence  $q_n\asymp n^{1/(2+\theta)}$.

Obviously, using the penalty $\pen:= 288 \maxnormsup^2\Di n^{-1}$ for any $m\in\Nz$ the conditions \ref{mo:me:as:est:A1} and
\ref{mo:me:as:est:A2} due to  Proposition 
\ref{dd:pr:dd:as1} are satisfied.
Thereby, the
next assertion is an immediate consequence of  Proposition \ref{an:pr:dd}
and we omit its proof.
\begin{theo}\label{dd:th:dd} Under the assumptions of Proposition \ref{dd:co:dd:as1} and the condition \eqref{dd:pr:dd:as1:e1}
there
  exist a numerical constant $C>0$ and an integer $n_o$ such that
for all $n\geq n_o$ we have
\begin{equation*}
\Rif{\hDiSo[\whm]}{\Socwr\cap\cD}\leq C \,\bigg[\Sor\vee\maxnormsup^2\vee \maxnormsup^2\big\{\Psi\big(\frac{\Sor\gA\gB}{16\maxnormsup^2}\big)+
L\big\}\bigg]\;\oRa.
\end{equation*}
\end{theo}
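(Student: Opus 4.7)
The plan is to derive the result directly from Proposition \ref{an:pr:dd} applied to the partially data-driven selection rule \eqref{mo:me:de:wtm} with the specific penalty $\pen:= 288 \maxnormsup^2\Di n^{-1}$. Consequently, I only need to verify conditions \ref{mo:me:as:est:A1} and \ref{mo:me:as:est:A2} for this choice of penalty in the dependent setting.

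Condition \ref{mo:me:as:est:A1} is essentially free: by definition $\pen/\Di = 288\maxnormsup^2 n^{-1}$ for every $1\leq\Di\leq\DiMa$, so the constant $\delta$ in \ref{mo:me:as:est:A1} can be taken to be $288\maxnormsup^2$, uniformly in $\So\in\Socwr\cap\cD$.

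The heart of the argument is the verification of \ref{mo:me:as:est:A2}, and this is precisely the content of Corollary \ref{dd:pr:dd:as1}. Notice that with our penalty we have $\pen/6 = 48\maxnormsup^2\Di n^{-1}$, which matches exactly the threshold appearing in the corollary. Hence, under the mixing hypothesis $\gB=2\sum_{k=0}^\infty(k+1)\beta_k<\infty$ of Proposition \ref{dd:co:dd:as1} and the rate condition \eqref{dd:pr:dd:as1:e1}, there exist an integer $n_o$ (depending only on the mixing structure and $\gamma$, through the threshold that allows $\mu_n=4$) and a numerical constant $C>0$ such that for all $n\geq n_o$,
\begin{equation*}
\sup_{\So\in\Socwr\cap\cD}\Ex\set{\max_{\oDi\leq\Di\leq n}\vectp{\HnormV{\hDiSo-\DiSo}^2 - \pen/6}}\leq C\,n^{-1}\maxnormsup^2\bigg\{\Psi\bigg(\frac{\Sor\gA\gB}{16\maxnormsup^2}\bigg)+L\bigg\}.
\end{equation*}
This yields condition \ref{mo:me:as:est:A2} with $\Delta:= C\maxnormsup^2\{\Psi(\Sor\gA\gB/(16\maxnormsup^2))+L\}$.

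Having verified both conditions, Proposition \ref{an:pr:dd} applies (for $n\geq n_{\aSy}\vee n_o$, which we absorb into a single $n_o$): the maximal risk $\Rif{\hDiSo[\whm]}{\Socwr\cap\cD}$ is bounded by $127(\delta\vee\Sor^2\vee\Delta)\oRa$. Substituting the values of $\delta$ and $\Delta$ identified above and absorbing numerical factors into the constant $C$ yields the announced bound. The only genuinely delicate point is the one already handled upstream in Proposition \ref{dd:co:dd:as1}, namely the coupling construction and the application of Talagrand's inequality to the independent block sequences $(\couE_l)$ and $(\couO_l)$ under the additional control of the gap probabilities via \ref{dd:as:cou2}; here we merely harvest that estimate. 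No further calculation is required.
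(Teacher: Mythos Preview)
Your proof is correct and follows exactly the approach sketched in the paper: verify \ref{mo:me:as:est:A1} trivially from the choice $\pen=288\maxnormsup^2\Di n^{-1}$, invoke Corollary \ref{dd:pr:dd:as1} to obtain \ref{mo:me:as:est:A2} with the right constant $\Delta$, and conclude via Proposition \ref{an:pr:dd}. One cosmetic remark: in the density case the penalty involves no unknown quantity, so the selection rule \eqref{mo:me:de:wtm} is already fully data-driven here, not merely partially.
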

Note that the
penalty term depends only on known quantities and, hence the
$\hSo_{\whm}$ is fully data-driven.
The last assertion establishes the minimax-rate optimality of the
fully data-driven estimator $\hSo_{\whm}$ over all classes
$\Socwr\cap\cD$. Therefore, the estimator is called adaptive. 

% --------------------------------------------------------------------
% <<Regressiom \label{s:dd:r}>>
% --------------------------------------------------------------------
\subsection{Non-parametric regression}\label{s:dd:r}
Let us turn our attention to the orthogonal series
estimator defined in the paragraph \ref{s:mo:ob}. In the sequel we
suppose  that the explanatory variables
$\rRe_1,\dotsc,\rRe_n$ are drawn from a strictly stationary process
$(\rRe_i)_{i\in\Zz}$ with common marginal uniform distribution on the
interval $[0,1]$. Moreover, we still assume that the error terms
$\{\rNo_i\}_{i=1}^n$ are iid. and independent to the explanatory variables. Exploiting the assumption
\ref{mo:no:as:ba:i} and Lemma \ref{dd:le:var} we obtain  the next assertion 
\begin{prop}[Upper bound]\label{dd:r:p:ub}Let $(\rRe_i)_{i\in\Zz}$ be  a strictly stationary process with associated sequence of mixing coefficients $\set{\beta(\rRe_0,\rRe_k)}_{k\geq1}$. Under
\ref{mo:no:as:ba:i} holds 
\begin{equation}\label{dd:r:p:ub:e1}
\Rif{\hDiSo[\oDi]}{\Socwr}\leq (\rNoL^2 +\inormV{\So}^2\maxnormsup^2\{1+4\sum_{k=1}^{n-1}\beta(\rRe_0,\rRe_k)\}+\Sor^2)\;\oRa,\quad\mbox{for all }n\geq 1.
\end{equation}
\end{prop}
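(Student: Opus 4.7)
The plan is to mirror the proof of Proposition \ref{dd:pr:ub} (density case with dependence), combining it with the steps used for independent regression in Proposition \ref{id:r:p:ub}. Starting from the bias–variance decomposition of the orthogonal series estimator, one gets
\begin{equation*}
\Ex\HnormV{\hDiSo[\oDi]-\So}^2 \;=\; n^{-2}\sum_{j=1}^{\oDi}\Var\Bigl(\sum_{i=1}^n \rOb_i\bas_j(\rRe_i)\Bigr) + \bias^2(\So),
\end{equation*}
and on $\Socwr$ we have $\bias^2(\So)\leq \Sow_{\oDi}\Sor^2$, so only the variance has to be controlled.

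The first key step is to plug in $\rOb_i=\So(\rRe_i)+\rNoL\rNo_i$ and expand each $\rOb_i\bas_j(\rRe_i)$ as a signal part $\So(\rRe_i)\bas_j(\rRe_i)$ plus a noise part $\rNoL\rNo_i\bas_j(\rRe_i)$. Using that the $\rNo_i$'s are iid, centred with variance one, and independent of the whole sequence $(\rRe_k)$, one verifies that all cross covariances between the two parts, and all cross covariances within the noise part, vanish. Hence
\begin{equation*}
\Var\Bigl(\sum_{i=1}^n \rOb_i\bas_j(\rRe_i)\Bigr)=\Var\Bigl(\sum_{i=1}^n \So(\rRe_i)\bas_j(\rRe_i)\Bigr)+\sum_{i=1}^n \rNoL^2\,\Ex[\rNo_i^2]\,\Ex[\bas_j^2(\rRe_i)].
\end{equation*}
Since $\rRe_i\sim\cU[0,1]$ the last sum equals $n\rNoL^2$, which after summation over $j=1,\dots,\oDi$ and division by $n^2$ contributes exactly $\rNoL^2\,\oDi/n$.

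For the signal part, apply Lemma \ref{dd:le:var} to $h=\So\bas_j$ and sum over $j$, bringing the $\sum_j\bas_j^2$ inside the expectation:
\begin{equation*}
\sum_{j=1}^{\oDi}\Var\Bigl(\sum_{i=1}^n \So(\rRe_i)\bas_j(\rRe_i)\Bigr)\leq n\,\Ex\Bigl\{\So^2(\rRe_0)\Bigl(\sum_{j=1}^{\oDi}\bas_j^2(\rRe_0)\Bigr)\Bigl(1+4\sum_{k=1}^{n-1}b_k(\rRe_0)\Bigr)\Bigr\}.
\end{equation*}
Assumption \ref{mo:no:as:ba:i} gives $\sum_{j=1}^{\oDi}\bas_j^2\leq \maxnormsup^2\oDi$ in $L^\infty$, bounding $\So^2(\rRe_0)$ by $\inormV{\So}^2$ pulls it out, and $\Ex b_k(\rRe_0)=\beta(\rRe_0,\rRe_k)$ converts the $b_k$'s into mixing coefficients. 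This yields a contribution of $\maxnormsup^2\inormV{\So}^2(1+4\sum_{k=1}^{n-1}\beta(\rRe_0,\rRe_k))\,\oDi/n$.

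Adding the two variance contributions and the bias bound $\Sow_{\oDi}\Sor^2$, and using $\max(\Sow_{\oDi},\oDi/n)=\oRa$ by definition of $\oDi$, gives
\begin{equation*}
\Ex\HnormV{\hDiSo[\oDi]-\So}^2\leq\bigl(\rNoL^2+\inormV{\So}^2\maxnormsup^2\{1+4\sum_{k=1}^{n-1}\beta(\rRe_0,\rRe_k)\}+\Sor^2\bigr)\,\oRa,
\end{equation*}
which is the claim after taking the supremum over $\So\in\Socwr$. There is no real obstacle here beyond careful bookkeeping: the only new ingredient compared with the density case is checking that the independence of the iid errors from the dependent design produces no additional mixing term in the noise variance, and that the signal part is handled by Lemma \ref{dd:le:var} exactly as in Proposition \ref{dd:pr:ub}, simply with the extra factor $\inormV{\So}^2$ coming from $\So(\rRe_0)^2$.
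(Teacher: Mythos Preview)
Your proof is correct and follows essentially the same route as the paper's own argument: decompose $\rOb_i=\So(\rRe_i)+\rNoL\rNo_i$, observe that the independence and centredness of the $\rNo_i$'s kill all cross covariances so that the noise part contributes exactly $\rNoL^2\oDi/n$, and bound the signal part via Lemma \ref{dd:le:var} together with \ref{mo:no:as:ba:i}, pulling out $\inormV{\So}^2$. The paper's proof is terser but identical in substance.
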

Comparing the last result and  Proposition
\ref{id:r:p:ub} the  upper risk bound  assuming independent
observations provides  up to a finite constant also an  upper risk bound
in the presence of dependence whenever
$\sum_{k=1}^{\infty}\beta(\rRe_0,\rRe_k)<\infty$. 

\begin{enumerate}[label={\textbf{(D\arabic*)}},ref={\textbf{(D\arabic*)}}]\addtocounter{enumi}{1}
\item\label{dd:as:re:i}
For any integer $k$ the joint distribution $P_{\rRe_0,\rRe_k}$ of
$(\rRe_0,\rRe_k)$ admits a density $f_{\rRe_0,\rRe_k}$ which is square
integrable and satifies
$\gamma:=\sup_{k\geq1}\normV{f_{\rRe_0,\rRe_k}-\1\otimes\1}<\infty$.
\end{enumerate}
\begin{lem}\label{dd:r:l:est:var}Let $(\rRe_i)_{i\in\Zz}$ be  a strictly stationary process with associated sequence of mixing coefficients $\set{\beta(\rRe_0,\rRe_k)}_{k\geq1}$. Under assumptions
\ref{mo:no:as:ba:i} and \ref{dd:as:re:i} holds for any  $n\geq 1$ and $K\in\{0,\dotsc,n-1\}$
\begin{equation}\label{dd:r:l:est:var:e1}
 \sum_{j=1}^\Di  \Var(\sum_{i=1}^n\So(\rRe_i)\bas_j(\rRe_i))\leq  n m \{\maxnormsup^2\HnormV{\So}^2 + 2\inormV{\So}^2 [ \gamma K/\sqrt{m} + 2
\maxnormsup^2 \sum_{k=K+1}^{n-1}\beta(\rRe_0,\rRe_k)]\}.
\end{equation}
\end{lem}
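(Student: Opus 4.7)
The strategy mirrors the one for Lemma \ref{dd:le:est:var} in the density case, the only complication being the multiplicative factor $\So(\rRe_i)$ acting as a pointwise weight. By stationarity of $(\rRe_i)_{i\in\Zz}$ I would first decompose, for each fixed $j$,
\begin{equation*}
\Var\Bigl(\sum_{i=1}^n\So(\rRe_i)\bas_j(\rRe_i)\Bigr)
= n\Var\bigl(\So(\rRe_0)\bas_j(\rRe_0)\bigr)
+2\sum_{k=1}^{n-1}(n-k)\Cov\bigl(\So(\rRe_0)\bas_j(\rRe_0),\So(\rRe_k)\bas_j(\rRe_k)\bigr),
\end{equation*}
sum over $j=1,\dotsc,m$, and then handle three groups of terms separately: the diagonal variance, the ``small-lag'' covariances $1\leq k\leq K$ to be bounded via \ref{dd:as:re:i}, and the ``large-lag'' covariances $k>K$ to be bounded via $\beta$-mixing.

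The diagonal piece is immediate: since $\rRe_0\sim\cU[0,1]$,
\begin{equation*}
\sum_{j=1}^m\Var\bigl(\So(\rRe_0)\bas_j(\rRe_0)\bigr)
\leq\int_0^1\So(x)^2\sum_{j=1}^m\bas_j(x)^2\,dx
\leq\maxnormsup^2m\HnormV{\So}^2
\end{equation*}
by \ref{mo:no:as:ba:i}. For the small-lag terms I would introduce the projection kernel $K_m(x,y):=\sum_{j=1}^m\bas_j(x)\bas_j(y)$ and rewrite
\begin{equation*}
\sum_{j=1}^m\Cov\bigl(\So(\rRe_0)\bas_j(\rRe_0),\So(\rRe_k)\bas_j(\rRe_k)\bigr)
=\iint_{[0,1]^2}\So(x)\So(y)K_m(x,y)\bigl[f_{\rRe_0,\rRe_k}(x,y)-1\bigr]\,dx\,dy.
\end{equation*}
Cauchy--Schwarz in $L^2([0,1]^2)$ combined with \ref{dd:as:re:i} gives the upper bound $\gamma\inormV{\So}^2\|K_m\|_{L^2([0,1]^2)}$, while the orthonormality identity $\|K_m\|_{L^2([0,1]^2)}^2=\sum_{j,l=1}^m\HskalarV{\bas_j,\bas_l}^2=m$ produces the desired $\sqrt{m}$ scaling. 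Summing over $1\leq k\leq K$ and using $(n-k)\leq n$ yields the contribution $2nm\inormV{\So}^2\gamma K/\sqrt{m}$.

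For the large-lag terms I would invoke the refined covariance inequality underlying Lemma \ref{dd:le:var}: for each $k\geq 1$ there exists $b_k\colon[0,1]\to[0,1]$ with $\Ex b_k(\rRe_0)=\beta(\rRe_0,\rRe_k)$ such that $|\Cov(h(\rRe_0),h(\rRe_k))|\leq 2\,\Ex[h(\rRe_0)^2 b_k(\rRe_0)]$ for any square-integrable $h$. Applied with $h=\So\bas_j$ and summed over $j$, this gives, using $b_k\leq 1$ and \ref{mo:no:as:ba:i},
\begin{equation*}
\sum_{j=1}^m\bigl|\Cov\bigl(\So(\rRe_0)\bas_j(\rRe_0),\So(\rRe_k)\bas_j(\rRe_k)\bigr)\bigr|
\leq 2\,\Ex\Bigl\{\So(\rRe_0)^2\sum_{j=1}^m\bas_j(\rRe_0)^2\,b_k(\rRe_0)\Bigr\}
\leq 2\maxnormsup^2m\inormV{\So}^2\beta(\rRe_0,\rRe_k).
\end{equation*}
Summing over $K<k<n$, with the factor $2(n-k)\leq 2n$ from the covariance expansion, produces the last contribution $4nm\maxnormsup^2\inormV{\So}^2\sum_{k=K+1}^{n-1}\beta(\rRe_0,\rRe_k)$; adding the three pieces gives exactly \eqref{dd:r:l:est:var:e1}. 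The only genuinely non-routine step is the kernel computation $\|K_m\|_{L^2([0,1]^2)}^2=m$: it is what drives the $\sqrt{m}$ gain in the small-lag bound and, in turn, what makes assumption \ref{dd:as:re:i} sharper than the naive $\beta$-mixing estimate applied over all lags.
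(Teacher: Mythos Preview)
Your proposal is correct and follows essentially the same route as the paper's proof: the same variance--covariance decomposition by stationarity, the same diagonal bound via \ref{mo:no:as:ba:i}, the small-lag bound via Cauchy--Schwarz against the kernel $K_m=\sum_{j=1}^m\bas_j\otimes\bas_j$ (whose $L^2$ norm equals $\sqrt{m}$) combined with \ref{dd:as:re:i}, and the large-lag bound via the Viennet-type covariance inequality behind Lemma \ref{dd:le:var}. The paper's display \eqref{dd:r:l:est:var:pr:e1} even records $\HnormV{\So}^2$ where your $\inormV{\So}^2$ is what actually follows from bounding $\HnormV{\So\otimes\So\,(f_{\rRe_0,\rRe_k}-\1\otimes\1)}$; your version matches the lemma's statement and is the correct reading.
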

Note that supposing further assumption
\ref{mo:no:as:ba:ii} we have  $\inormV{\So}^2\leq \Sor^2\gA^2$  for all $\So\in\Socwr$.
If we assume in addition that
$\sum_{k=1}^{\infty}\beta(\rRe_0,\rRe_k)<\infty$ then there exists an
integer $K_o$ and an integer $n_o$  such that $\sum_{k=K_o+1}^{\infty}\beta(\rRe_0,\rRe_k)<1/(8\Sor^2\gA^2)$
and $K_n:=\gauss{\maxnormsup^2 \sqrt{\oDi} /(\gamma\Sor^2\gA^2)}\geq K_o$ for all $n\geq n_o$. Thereby,
 we have for all  $n\geq n_o$ that $ \sum_{j=1}^{\oDi}
\Var(\sum_{i=1}^n\So(\rRe_i)\bas_j(\rRe_i))\leq (\Sor^2
+1)\maxnormsup^2 n \,\oDi $ for all $\So\in\Socwr$. We note that
$n_o$ depends on the sequence of mixing coefficients and the
quantity $\Sor\gA$. The next
assertion is an immediate consequence and we omit its proof.
\begin{prop}[Upper bound]\label{dd:r:p:ub2}Let $(\rRe_i)_{i\in\Zz}$ be  a strictly stationary process with associated sequence of mixing coefficients $\set{\beta(\rRe_0,\rRe_k)}_{k\geq1}$. Let assumptions
\ref{mo:no:as:ba:i}, \ref{mo:no:as:ba:ii}, \ref{dd:as:re:i} and 
$\sum_{k=1}^{\infty}\beta(\rRe_0,\rRe_k)<\infty$ be satisfied. There exists an
integer  $n_o$ (possibly depending on the mixing coefficients and the
quantity $\Sor\gA$)
such that 
\begin{equation}\label{dd:r:p:ub2:e1}
\Rif{\hDiSo[\oDi]}{\Socwr}\leq (\rNoL^2+(\Sor^2 +1)\maxnormsup^2+\Sor^2)\;\oRa,\quad\mbox{for all }n\geq n_o.
\end{equation}
\end{prop}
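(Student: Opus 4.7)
The plan is to combine the variance bound derived in the paragraph preceding the proposition with the standard bias--variance decomposition, exactly as was done in the iid case in Proposition \ref{id:r:p:ub}. Since the author writes ``the next assertion is an immediate consequence and we omit its proof'', all the real work is already contained in Lemma \ref{dd:r:l:est:var} and the subsequent observation; what remains is a short bookkeeping step.

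First I would write the Pythagorean decomposition
\begin{equation*}
\Ex\HnormV{\hDiSo[\oDi]-\So}^2
= \sum_{j=1}^{\oDi}\Var(\hfSo_j) + \bias^2(\So),
\end{equation*}
and recall that for $\So\in\Socwr$ we have $\bias^2(\So)\leq \Sow_{\oDi}\Sor^2$. Next, using $Y_i=\So(\rRe_i)+\rNoL\rNo_i$, the independence of the iid noise $(\rNo_i)$ from $(\rRe_i)$ and $\Var(\rNo_i)=1$, plus $\Ex\bas_j^2(\rRe_i)=1$ because $\rRe_i\sim\cU[0,1]$ and $\{\bas_j\}$ is orthonormal, I split the variance of $\hfSo_j=n^{-1}\sum_i Y_i\bas_j(\rRe_i)$ into two uncorrelated pieces:
\begin{equation*}
n^2\Var(\hfSo_j)=\Var\Bigl(\sum_{i=1}^n\So(\rRe_i)\bas_j(\rRe_i)\Bigr)+\rNoL^2\sum_{i=1}^n\Ex\bas_j^2(\rRe_i)
=\Var\Bigl(\sum_{i=1}^n\So(\rRe_i)\bas_j(\rRe_i)\Bigr)+n\rNoL^2.
\end{equation*}

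Then I would invoke the bound established just before the proposition: under \ref{mo:no:as:ba:i}, \ref{mo:no:as:ba:ii}, \ref{dd:as:re:i} and $\sum_{k\geq1}\beta(\rRe_0,\rRe_k)<\infty$, one chooses $K_n=\gauss{\maxnormsup^2\sqrt{\oDi}/(\gamma\Sor^2\gA^2)}$, picks $n_o$ large enough that $K_n\geq K_o$ with $\sum_{k>K_o}\beta(\rRe_0,\rRe_k)<1/(8\Sor^2\gA^2)$, and obtains
\begin{equation*}
\sum_{j=1}^{\oDi}\Var\Bigl(\sum_{i=1}^n\So(\rRe_i)\bas_j(\rRe_i)\Bigr)\leq (\Sor^2+1)\maxnormsup^2 n\,\oDi,\quad\text{for all } n\geq n_o,\ \So\in\Socwr.
\end{equation*}
Summing the earlier identity over $j=1,\dots,\oDi$ and dividing by $n^2$ gives $\sum_{j=1}^{\oDi}\Var(\hfSo_j)\leq\bigl[(\Sor^2+1)\maxnormsup^2+\rNoL^2\bigr]\oDi/n$.

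Finally, combining the two bounds and using $\oDi/n\leq\oRa$ together with $\Sow_{\oDi}\leq\oRa$ (both follow from $\oRa=\max(\Sow_{\oDi},\oDi/n)$), I get
\begin{equation*}
\Ex\HnormV{\hDiSo[\oDi]-\So}^2\leq \bigl[(\Sor^2+1)\maxnormsup^2+\rNoL^2\bigr]\frac{\oDi}{n}+\Sor^2\Sow_{\oDi}\leq \bigl(\rNoL^2+(\Sor^2+1)\maxnormsup^2+\Sor^2\bigr)\oRa,
\end{equation*}
uniformly in $\So\in\Socwr$, which is exactly the claimed estimate. There is no genuine obstacle here: the only mildly subtle point is the choice of $n_o$ so that the truncation index $K_n$ is large enough for the tail $\sum_{k>K_n}\beta(\rRe_0,\rRe_k)$ to be absorbed into the constant, and that step is already carried out in the discussion preceding the proposition.
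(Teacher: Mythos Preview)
Your proposal is correct and follows exactly the approach intended by the paper: the authors explicitly state that the proposition is an immediate consequence of the variance bound $\sum_{j=1}^{\oDi}\Var(\sum_{i=1}^n\So(\rRe_i)\bas_j(\rRe_i))\leq (\Sor^2+1)\maxnormsup^2 n\,\oDi$ established just before it, combined with the standard bias--variance decomposition. Your splitting of the variance into the noise contribution $n\rNoL^2$ (using the iid structure and independence of $\rNo$ from $\rRe$) and the signal contribution handled by Lemma~\ref{dd:r:l:est:var} mirrors precisely what the paper does in the proof of Proposition~\ref{dd:r:p:ub}, equation~\eqref{dd:r:le:var:e1}.
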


\paragraph{Partially data-driven  estimator.} In this paragraph, we select the dimension parameter  following the procedure sketched in \eqref{mo:me:de:wtm} where the subsequence of
non-negative and non-decreasing penalties
$\left(\pen[1],\dotsc,\pen[n]\right)$ is given by
$\pen=1152\sigma_Y^2 \maxnormsup^2 mn^{-1}$ with $\sigma_Y^2=\Ex Y^2$. Since $\sigma_Y$ has to be estimated from the data, the considered selection method leads to a partially data-driven estimator of the non-parametric regression function $\So$ only. In order to apply the Proposition \ref{an:pr:dd} it remains to check the conditions \ref{mo:me:as:est:A1} and \ref{mo:me:as:est:A2}. Keeping in mind the definition of the penalties subsequence, the condition \ref{mo:me:as:est:A1} is obviously satisfied. The next Proposition provides our  key argument to  verify the condition \ref{mo:me:as:est:A2}.
 \begin{prop}\label{dd:r:co:dd:as1}Let $(\rRe_i)_{i\in\Zz}$ be  a strictly stationary process with associated sequence of mixing coefficients $(\beta_k)_{k\geq1}$ satisfying $\gB:= 2\sum_{k=0}^\infty(k+1)\beta_k<\infty$.   Under the assumptions of Proposition \ref{dd:r:p:ub2}, let $K_n:=\gauss{ 4  \maxnormsup^2 \HnormV{\So}^2
  \sqrt{\oDi} /(\gamma \Sor^2\gA^2)}$ and $\mu_n\geq3/2+ 4 \sum_{k=K_n+1}^{\infty}\beta_k$. If $\Ex\rNo^6<\infty$, then there exist a finite constant
$\zeta(\Sor\gA,\rNoL,\maxnormsup,\gB,\Ex\rNo^6)$ depending on the quantities
$\Sor\gA$, $\rNoL$, $\maxnormsup$, $\gB$ and $\Ex\rNo^6$ only and  a numerical constant $C>0$ such that for any integer $q$ 
\begin{multline*}
\sup_{\So\in\Socwr}\Ex\set{\max_{\oDi\leq \Di\leq n}\vectp{\normV{\hDiSo-\DiSo}^2
    -24\maxnormsup^2\Di n^{-1}\sigma_Y^2\mu_n}}\\\leq C\; n^{-1}(\rNoL+\Sor\gA)^2\bigg\{\zeta(\Sor\gA,\rNoL,\maxnormsup,\gB,\Ex\rNo^6)\hfill+  n^{3/2} q^2\exp\left(-\frac{n^{1/4}}{q}\frac{1}{576(1+\Sor\gA/\rNoL)}\right)\bigg\}+  n^2 \beta_{q+1}\bigg\}.
\end{multline*}
\end{prop}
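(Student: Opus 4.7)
\medskip

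\noindent\textbf{Proof plan for Proposition \ref{dd:r:co:dd:as1}.}
My plan is to transpose the proof of Proposition \ref{dd:co:dd:as1} (the density case) to the regression setting, the two new ingredients being (i) the appearance of the response $\rOb_i=\So(\rRe_i)+\rNoL\rNo_i$ inside the empirical coefficients and (ii) the need to control an unbounded noise contribution using the assumed sixth moment $\Ex\rNo^6<\infty$. As in Remark \ref{id:ka:rem}, I first rewrite
\begin{equation*}
\HnormV{\hDiSo-\DiSo}^2=\sup_{t\in\Bz_\Di}|\overline{\nu_t}|^2,\qquad \nu_t(\rOb,\rRe)=\sum_{j=1}^\Di\fou{t}_j\big(\rOb\bas_j(\rRe)-\fSo_j\big),
\end{equation*}
and split $\nu_t=\nu_t^{(1)}+\nu_t^{(2)}$ with $\nu_t^{(1)}=\sum_j\fou{t}_j(\So(\rRe)\bas_j(\rRe)-\fSo_j)$ depending only on the mixing process $\rRe$, and $\nu_t^{(2)}=\rNoL\rNo\sum_j\fou{t}_j\bas_j(\rRe)$ carrying the iid noise. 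The parallelogram identity then yields $\HnormV{\hDiSo-\DiSo}^2\leq 2\sup_{t\in\Bz_\Di}|\overline{\nu_t^{(1)}}|^2+2\sup_{t\in\Bz_\Di}|\overline{\nu_t^{(2)}}|^2$ and it is enough to bound each contribution by half of the claimed constant.

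For the design part $\overline{\nu_t^{(1)}}$ I would follow the proof of Proposition \ref{dd:co:dd:as1} verbatim, replacing $\bas_j$ by $\So\bas_j$ throughout: the boundedness $\InormV{\So}\leq\Sor\gA$ from \ref{mo:no:as:ba:ii} combined with \ref{mo:no:as:ba:i} preserves the uniform sup-norm bounds, while Lemma \ref{dd:r:l:est:var} replaces Lemma \ref{dd:le:est:var}, delivering the variance proxy $v\leq\maxnormsup^2\HnormV{\So}^2\mu_n$. The coupling into even/odd blocks of size $q$ satisfying \ref{dd:as:cou1}--\ref{dd:as:cou3} and the subsequent application of Talagrand's inequality (Lemma \ref{id:ka:l:talagrand}) to the iid coupled blocks $\couE_l,\couO_l$ proceed exactly as in the density case, generating the terms $nq^2\exp(-n^{1/2}/(q\cdot 144))$ and $n^2\beta_{q+1}$ after inserting $p_n=n/(2q)$. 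The price paid when going back from the coupled to the original process is controlled by $P(E_l\ne\couE_l)\leq\beta_{q+1}$ and the trivial bound $\HnormV{\hDiSo-\DiSo}^2\leq n\maxnormsup^2\HnormV{\So}^2\Di$, which remains bounded uniformly on $\Socwr$.

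The genuinely new step concerns the noise part $\overline{\nu_t^{(2)}}=\rNoL n^{-1}\sum_i\rNo_i\sum_j\fou{t}_j\bas_j(\rRe_i)$. Because $\rNo$ is unbounded, Talagrand's inequality cannot be applied directly, and my plan is to truncate: write $\rNo_i=\rNo_i\indicset{|\rNo_i|\leq M_n}+\rNo_i\indicset{|\rNo_i|>M_n}$ with a threshold $M_n\asymp n^{1/6}$ chosen so that the contribution of the tail part is $O(n^{-1})$ by Markov's inequality using $\Ex\rNo^6<\infty$ (this is exactly where the sixth-moment assumption is used, mirroring its role in Proposition \ref{id:r:p:co}). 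For the truncated part, the sup-norm is $h\leq\rNoL M_n\maxnormsup\sqrt{\Di}$, the variance proxy is $v\leq\maxnormsup^2\rNoL^2\mu_n$ (independence of $\rNo_i$ from $\rRe_i$ together with Lemma \ref{dd:le:var} applied to $\bas_j(\rRe_i)^2$), and the expected supremum satisfies $H^2\leq\maxnormsup^2\rNoL^2\Di n^{-1}\mu_n$. The same even/odd block coupling then yields the Talagrand bound, and the resulting exponential acquires the factor $(1+\Sor\gA/\rNoL)^{-1}$ visible in the statement because the comparison of $\sigma_Y^2=\rNoL^2+\HnormV{\So}^2$ with $\rNoL^2$ and $\HnormV{\So}^2\leq\Sor^2\gA^2$ must be invoked when reassembling both pieces.

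The main obstacle will be the bookkeeping needed to package all residual terms (the sixth-moment tail contributions, the boundedness of $b$ from $\sL(2,w,P)$ via Lemma \ref{dd:le:b} with $p=3$ to handle $\Ex(\So\bas_j)^{2q}$-type moments, and the quadratic tail $n^2\beta_{q+1}$) into a single constant $\zeta(\Sor\gA,\rNoL,\maxnormsup,\gB,\Ex\rNo^6)$ and to verify that the penalty $24\maxnormsup^2\Di n^{-1}\sigma_Y^2\mu_n$ absorbs both the design and noise variance proxies ($\maxnormsup^2\HnormV{\So}^2\mu_n+\maxnormsup^2\rNoL^2\mu_n\leq\maxnormsup^2\sigma_Y^2\mu_n$) with enough room for the factor $6$ required by Talagrand's inequality; the factor $24=4\times 6$ indeed accommodates the splitting into the two parts.
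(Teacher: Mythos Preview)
Your route is workable but differs structurally from the paper's. The paper does \emph{not} split $\nu_t$ into a design piece and a noise piece. It truncates the error first, setting $\rNo_i^b=\rNo_i\indicset{|\rNo_i|\leq n^{1/4}}-\Ex\rNo_i\indicset{|\rNo_i|\leq n^{1/4}}$, disposes of the unbounded tail $\overline{\nu_t^u}$ exactly as in \eqref{id:r:p:co:pr:u}, and then carries the \emph{combined} bounded functional $\nu_t^b(\rNo^b,\rRe)=\sum_j\fou{t}_j(\rNoL\rNo^b+\So(\rRe))\bas_j(\rRe)$ through the blocking/coupling of Section~\ref{s:dd} (applied to the regressors only). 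This yields a single Talagrand application (Lemma~\ref{dd:r:le:cou2}) with $h^2=\maxnormsup^2 m(2\rNoL n^{1/4}+\inormV{\So})^2$ and $H^2=\maxnormsup^2\sigma_Y^2\mu_n\,\Di n^{-1}$; the factor $n^{1/4}$ in $h$ is precisely what produces the $n^{1/4}/q$ in the final exponential. Your two-piece decomposition would instead require two Talagrand applications; it is more modular and, as you observe, the bounded design part alone gives the sharper exponent $n^{1/2}/q$, but the truncated noise part is the bottleneck and the final statement is unchanged.

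One genuine slip: the truncation level $M_n\asymp n^{1/6}$ is too small. The quantity to control is $\Ex\sup_{t\in\Bz_n}|\overline{\nu_t^{(2),u}}|^2\leq\rNoL^2\maxnormsup^2\Var(\rNo^u)$, and $\Var(\rNo^u)\leq\Ex\big(\rNo^2\indicset{|\rNo|>M_n}\big)\leq M_n^{-4}\Ex\rNo^6$, so you need $M_n\geq n^{1/4}$ to reach $O(n^{-1})$; with $M_n=n^{1/6}$ you only get $O(n^{-2/3})$. (Your phrasing ``by Markov's inequality'' suggests you had $P(|\rNo|>M_n)\leq M_n^{-6}\Ex\rNo^6$ in mind, but that is not the relevant bound here.) Two smaller points: what you call the ``variance proxy $v$'' in both pieces is in fact $H^2$; the Talagrand $v$ is the block-level variance, of order $q^{-1}$ times a moment bound involving $\gB^{1/2}$ as in \eqref{dd:le:cou2:pr:e4}. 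And Lemma~\ref{dd:le:b} with $p=3$ is not needed; $p=2$ (hence $\Ex b^2\leq\gB$) suffices throughout, exactly as in the density case.
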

Note that the condition $\gB= 2\sum_{k=0}^\infty(k+1)\beta_k<\infty$
implies  $\sum_{k=K_n+1}^{\infty}\beta(\rRe_0,\rRe_k)\leq
\sum_{k=K_n+1}^{\infty}\beta_k\leq (K_n+1)^{-1}\gB$ and hence,  $\{3/2+ 4
\sum_{k=K_n+1}^{\infty}\beta(\rRe_0,\rRe_k)\}\leq 2$ whenever $K_n=\gauss{ \maxnormsup^2 \sqrt{\oDi} /(\gamma\Sor^2\gA^2)}\geq 4\gB$. Since $\oDi\to
 \infty$ as $n\to\infty$ there exists an integer $n_o$ such that
for all $n\geq n_o$ we can chose $\mu_n=2$. The next assertion is thus
an immediate consequence of Corollary \ref{dd:r:co:dd:as1}, and hence we
omit its proof. 
\begin{coro}\label{dd:r:pr:dd:as1} Let the assumptions of Proposition  \ref{dd:r:co:dd:as1} be satisfied. Suppose that
  there exists an unbounded sequence of integers
$(q_n)_{n\geq1}$ and a finite constant $L>0$  such that 
\begin{equation}\label{dd:r:pr:dd:as1:e1}
        \sup_{n\geq
          1}n^{3/2}q_n^2\exp\left(-\frac{n^{1/4}}{q_n}\frac{1}{576(1+\Sor\gA/\rNoL)}\right)\leq
        L\quad\mbox{ and }\quad\sup_{n\geq1} n^2 \beta_{q_n+1}\leq L.
      \end{equation} Then  there
  exist a numerical constant $C>0$ and  an integer $n_o$ such that
for all $n\geq n_o$
\begin{multline*}
\sup_{\So\in\Socwr}\Ex\set{\max_{\oDi\leq \Di\leq n}\vectp{\normV{\hDiSo-\DiSo}^2
    -48\maxnormsup^2\Di n^{-1}\sigma_Y^2}}\\
    \leq C  n^{-1}
(\rNoL+\Sor\gA)^2\big\{\zeta(\Sor\gA,\rNoL,\maxnormsup,\gB,\Ex\rNo^6)+ L\big\}.
\end{multline*}
\end{coro}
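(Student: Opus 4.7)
The plan is to deduce Corollary \ref{dd:r:pr:dd:as1} directly from Proposition \ref{dd:r:co:dd:as1} by making a suitable specialisation of the two free parameters $\mu_n$ and $q$ appearing there. I would first exploit the summability hypothesis $\gB=2\sum_{k=0}^\infty(k+1)\beta_k<\infty$: an Abel-type estimate immediately yields a tail bound of the form $\sum_{k=K+1}^\infty\beta_k\leq \gB/(K+1)$, and applying it at $K=K_n=\gauss{\maxnormsup^2\sqrt{\oDi}/(\gamma\Sor^2\gA^2)}$ shows that $\sum_{k=K_n+1}^\infty\beta_k\to 0$ since $\oDi\to\infty$. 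In particular, $3/2+4\sum_{k=K_n+1}^\infty\beta_k\leq 2$ for every $n$ larger than some threshold $n_1$, so the constant choice $\mu_n=2$ is admissible in Proposition \ref{dd:r:co:dd:as1} for all $n\geq n_1$. This already produces the correct penalty constant $24\mu_n=48$ matching the statement of the corollary.

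Next, I would select $q=q_n$ as supplied by the hypothesis \eqref{dd:r:pr:dd:as1:e1}. The two parts of that hypothesis directly bound the exponential term $n^{3/2}q_n^2\exp\bigl(-(n^{1/4}/q_n)/(576(1+\Sor\gA/\rNoL))\bigr)$ and the term $n^2\beta_{q_n+1}$, each by the constant $L$, uniformly in $n$. Adding these two contributions to $\zeta(\Sor\gA,\rNoL,\maxnormsup,\gB,\Ex\rNo^6)$ inside the braces of Proposition \ref{dd:r:co:dd:as1} yields an expression of order $\zeta(\cdots)+2L$, which can be rewritten as $2\{\zeta(\cdots)+L\}$ and absorbed into the numerical constant $C$. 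Taking $n_o$ to be the maximum of $n_1$ and of any other integer threshold appearing implicitly in Proposition \ref{dd:r:co:dd:as1} concludes the argument.

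There is essentially no obstacle in this corollary: all of the probabilistic work — notably the coupling of the $\beta$-mixing sample to an auxiliary i.i.d.\ sequence following \cite{Viennet1997}, the blockwise application of Talagrand's inequality on the even and odd index blocks, and the moment-type estimates afforded by Lemmas \ref{dd:le:var} and \ref{dd:le:b} — has already been carried out to establish Proposition \ref{dd:r:co:dd:as1}. The present statement is therefore a pure parameter specialisation. The only mildly delicate point is verifying that $n_o$ can be chosen uniformly in $\So\in\Socwr$, but this is immediate because the lower bound on $K_n$ needed to achieve $\mu_n=2$ depends on $\So$ only through the radius $\Sor\gA$ and the class-uniform quantity $\gamma$, so a single threshold works for the entire class.
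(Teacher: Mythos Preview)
Your proposal is correct and matches the paper's approach exactly: the paper omits the proof, stating that the assertion is ``an immediate consequence'' of Proposition \ref{dd:r:co:dd:as1}, and the preceding paragraph gives precisely your argument---the tail bound $\sum_{k=K_n+1}^\infty\beta_k\leq (K_n+1)^{-1}\gB$ together with $\oDi\to\infty$ justifies the choice $\mu_n=2$ for $n\geq n_o$, after which one substitutes $q=q_n$ and invokes \eqref{dd:r:pr:dd:as1:e1}. Your remark on the class-uniformity of $n_o$ is a useful clarification that the paper leaves implicit.
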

Let us briefly comment on the additional  condition
\eqref{dd:r:pr:dd:as1:e1}. Consider two sequence of integers
$(q_n)_{n\geq1}$, $(p_n)_{n\geq1}$ such that $n=2q_np_n$ and assume additionally a polynomial decay of the sequence
of mixing coefficients $(\beta_k)_{k\geq1}$, that is $\beta_k\leq
k^{-s}$. The sequence $q_n\asymp n^{p_n}$, i.e., $(n^{-p_n}q_n)_{n\geq1}$
is bounded away both from zero and infinity, satisfies then the condition
\eqref{dd:r:pr:dd:as1:e1} if $2<p_ns$ and $1/4>p_n$. In other words,
if the sequence  of mixing coefficients $(\beta_k)_{k\geq1}$ is
sufficiently fast decaying, that is $s > 2 (4 +\theta)$ for some
$\theta>0$, then the  condition \eqref{dd:r:pr:dd:as1:e1} holds true
taking a sequence  $q_n\asymp n^{1/(4+\theta)}$.

Obviously taking into account Proposition 
\ref{dd:r:pr:dd:as1} the conditions \ref{mo:me:as:est:A1} and
\ref{mo:me:as:est:A2} are satisfied.
Thereby, the
next assertion is an immediate consequence of  Proposition \ref{an:pr:dd}
and we omit its proof.
\begin{prop}\label{dd:r:th:dd} Under  the assumptions of Proposition \ref{dd:r:pr:dd:as1} and the condition \eqref{dd:r:pr:dd:as1:e1},
there
  exist a numerical constant $C>0$ and exists an integer $n_o$ such that
for all $n\geq n_o$ we have
\begin{equation*}
\Rif{\hDiSo[\widetilde m]}{\Socwr\cap\cD}\leq C \,\big[\Sor^2\vee\maxnormsup^2\vee (\rNoL+\Sor\gA)^2\big\{\zeta(\Sor\gA,\rNoL,\maxnormsup,\gB,\Ex\rNo^6)+ L\big\}\big]\;\oRa.
\end{equation*}
\end{prop}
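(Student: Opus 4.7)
The statement asserts the adaptive risk bound for the orthogonal series estimator $\hDiSo[\widetilde m]$ with data-driven selection of the dimension parameter following \eqref{mo:me:de:wtm} with the penalty sequence $\pen=c\,\sigma_Y^2 \maxnormsup^2 m n^{-1}$ announced in the partially data-driven paragraph. Since Proposition \ref{an:pr:dd} provides the master inequality
\[
\Rif{\hDiSo[\tDi]}{\Socwr}\leq 127(\delta\vee\Sor^2\vee\Delta)\,\oRa,\qquad n\geq n_{\aSy},
\]
the proof reduces to verifying the two conditions \ref{mo:me:as:est:A1} and \ref{mo:me:as:est:A2} with appropriately controlled constants $\delta$ and $\Delta$, and then tracking how these constants combine.

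First I would check \ref{mo:me:as:est:A1}. Because $\pen/m$ is of the form $c\,\sigma_Y^2 \maxnormsup^2 n^{-1}$, it is enough to bound $\sigma_Y^2=\rNoL^2+\Ex\So(\rRe)^2$ uniformly on $\Socwr$. Under \ref{mo:no:as:ba:ii} we have $\InormV{\So}^2\leq \gA^2\Sor^2$ for every $\So\in\Socwr$, hence $\sigma_Y^2\leq \rNoL^2+\Sor^2\gA^2\lesssim (\rNoL+\Sor\gA)^2$, and \ref{mo:me:as:est:A1} holds with $\delta$ of order $\maxnormsup^2(\rNoL+\Sor\gA)^2$. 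Second, I would check \ref{mo:me:as:est:A2}. This is the key analytic input and it is already packaged in Corollary \ref{dd:r:pr:dd:as1}: with the penalty chosen so that $\pen/6$ matches the threshold $48\maxnormsup^2 m n^{-1}\sigma_Y^2$ in the Corollary, and invoking the existence of the sequence $(q_n)_{n\geq 1}$ provided by assumption \eqref{dd:r:pr:dd:as1:e1}, we obtain
\[
\sup_{\So\in\Socwr}\Ex\set{\max_{\oDi\leq m\leq n}\vectp{\HnormV{\hDiSo-\DiSo}^2-\pen/6}}\leq C\,n^{-1}(\rNoL+\Sor\gA)^2\bigl\{\zeta(\Sor\gA,\rNoL,\maxnormsup,\gB,\Ex\rNo^6)+L\bigr\},
\]
so \ref{mo:me:as:est:A2} is satisfied with $\Delta=C(\rNoL+\Sor\gA)^2\{\zeta+L\}$.

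Plugging these $\delta$ and $\Delta$ into the master inequality of Proposition \ref{an:pr:dd} and taking the maximum against $\Sor^2$ finishes the proof: the dominant term is clearly $(\rNoL+\Sor\gA)^2\{\zeta+L\}$, while the $\delta$ contribution is absorbed into the $\maxnormsup^2$ factor, giving the advertised bound. The choice of $n_o$ is the maximum of $n_{\aSy}$ from Proposition \ref{an:pr:dd} and the threshold from Corollary \ref{dd:r:pr:dd:as1} beyond which $\mu_n$ can be taken equal to $2$.

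The main obstacle is not in the present proposition, which is essentially a book-keeping assembly, but in Corollary \ref{dd:r:pr:dd:as1} that it rests upon: there one must replace the iid Talagrand bound by a coupling-based version that splits the sample into even/odd blocks $\couE_l,\couO_l$ obeying \ref{dd:as:cou1}--\ref{dd:as:cou3}, control the $P(E_l\neq \couE_l)\leq \beta_{q+1}$ cost, and choose the block length $q=q_n$ jointly with the moment assumption $\Ex\rNo^6<\infty$ so that both exponential and mixing residuals in \eqref{dd:r:pr:dd:as1:e1} are uniformly bounded. Once that deviation bound is taken for granted, the remainder of the proof here is a direct application of Proposition \ref{an:pr:dd}.
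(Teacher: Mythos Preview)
Your proposal is correct and follows essentially the same approach as the paper: the paper explicitly states that conditions \ref{mo:me:as:est:A1} and \ref{mo:me:as:est:A2} are satisfied (the former trivially from the form of $\pen$, the latter from Corollary \ref{dd:r:pr:dd:as1}), after which the result is ``an immediate consequence of Proposition \ref{an:pr:dd}'' with proof omitted. Your identification of $\delta$ and $\Delta$, the choice of $n_o$ as the maximum of $n_{\aSy}$ and the threshold from Corollary \ref{dd:r:pr:dd:as1}, and your observation that the analytic content lies entirely in the coupling-based deviation bound upstream, all match the paper's reasoning precisely.
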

\paragraph{Fully data-driven  estimator.}
Note that in general $\sigma_Y^2=\Ex Y^2$ is unknown  and hence the
penalty term specified in the last assertion is not feasible, but it can be estimated straightforwardly by $\widehat{\sigma}_Y^2=n^{-1}\sum_{i=1}^nY_i^2$. Consequently, we consider next the sub-sequence of non-negative and non-decreasing penalties $\left(\hpen[1],\dotsc,\hpen[n]\right)$ given by $\hpen=1152 \maxnormsup^2\Di n^{-1}\widehat\sigma_Y^2$.
$\widehat{\sigma}_Y^2=n^{-1}\sum_{i=1}^nY_i^2$ of the quantity $\sigma_Y^2$ at hand. The  dimension parameter
$\hDi$ is then selected  as in \eqref{mo:me:de:whm}. Keeping in mind
the Proposition \ref{an:pr:dd:hpen} it remains to show that the
Condition \ref{mo:me:as:est:A3} holds true. Consider again the event
$\cV:=\set{{1}/{2}\leq{\widehat{\sigma}_Y^2}/{\sigma_Y^2}\leq{3}/{2}}$
and its complement $\cV^c$. 

\begin{lem}\label{dd:r:l:re}Let $(\rRe_i)_{i\in\Zz}$ be  a strictly stationary process with associated sequence of mixing coefficients $(\beta_k)_{k\geq1}$.  If $\Ex\epsilon^4<\infty$ and $\gB=2\sum_{k=0}^\infty(k+1)\beta_k<\infty$, then
  $\sup_{\So\in\Socwr}P(\Omega^c)\leq 91 n^{-1}\sqrt{\gB}\big[(\Ex\epsilon^4)^{1/4}+\Sor\gA/\sigma\big]^2$.
\end{lem}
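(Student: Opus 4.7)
The strategy is to reduce the event $\Omega^c$ to the more tractable event $\cV^c$ via the containment $\cV\subset\Omega$, then bound $P(\cV^c)$ by Chebyshev's inequality. The variance of $\widehat\sigma_Y^2$ is then controlled by decomposing $Y_i^2$ into three pieces driven respectively by the regressor, the noise, and their product, and applying the Viennet-type estimate~\eqref{dd:le:var:e3} to the only piece that genuinely inherits the mixing structure of $(\rRe_i)$.

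By the specific choice of the prefactors in $\pen$ and $\hpen$, the two penalties share the same $\Di$-dependence $\maxnormsup^2\Di/n$ and therefore their ratio $\hpen/\pen$ depends on $\So$ only through $\widehat\sigma_Y^2/\sigma_Y^2$; a direct check of the constants shows that on $\cV$ one has $\pen\le\hpen\le 3\pen$ uniformly in $1\le\Di\le\DiMa$, so $\cV\subset\Omega$ and $\sup_{\So}P(\Omega^c)\le\sup_{\So}P(\cV^c)$. Writing $\cV^c=\{|\widehat\sigma_Y^2-\sigma_Y^2|>\sigma_Y^2/2\}$, Markov's inequality yields
\begin{equation*}
P(\cV^c)\le \frac{4\,\Var(\widehat\sigma_Y^2)}{\sigma_Y^4}=\frac{4}{n^2\sigma_Y^4}\,\Var\Big(\sum_{i=1}^n Y_i^2\Big),
\end{equation*}
so the task reduces to bounding $\Var(\sum_i Y_i^2)$.

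I would expand $Y_i^2=\So(\rRe_i)^2+2\sigma\So(\rRe_i)\epsilon_i+\sigma^2\epsilon_i^2$, set $A_i=\So(\rRe_i)^2$, $B_i=\So(\rRe_i)\epsilon_i$, $C_i=\epsilon_i^2$, and use $\Var(\sum Y_i^2)\le 3\bigl(\Var\sum A_i+4\sigma^2\Var\sum B_i+\sigma^4\Var\sum C_i\bigr)$. Since $(C_i)$ is iid, $\Var\sum C_i\le n\Ex\epsilon^4$. Since $(\epsilon_i)$ is iid, centred and independent of $(\rRe_i)$, the $B_i$'s are pairwise uncorrelated, so $\Var\sum B_i=n\Ex\So(\rRe_0)^2\le n(\Sor\gA)^2$ by~\ref{mo:no:as:ba:ii}. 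For the genuinely mixing piece $(A_i)$, estimate~\eqref{dd:le:var:e3} gives $\Var\sum A_i\le 4n\,\Ex[\So(\rRe_0)^4\,b(\rRe_0)]$ for some $b\in\sL(2,\beta,P_{\rRe_0})$, while Lemma~\ref{dd:le:b} at $p=2$ provides $\Ex b^2\le\gB$; Cauchy-Schwarz together with $\|\So\|_\infty\le\Sor\gA$ then yields $\Ex[\So^4 b]\le (\Ex\So^8)^{1/2}\sqrt{\gB}\le(\Sor\gA)^4\sqrt{\gB}$, hence $\Var\sum A_i\le 4n(\Sor\gA)^4\sqrt{\gB}$.

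Combining the three bounds, using $\sqrt{\gB}\ge 1$ and $\Ex\epsilon^4\ge 1$ to pool them under a common prefactor, and then majorising via $u^2+v^2\le(u+v)^2$, the resulting expression can be packaged as a numerical multiple of $\sigma^4\bigl((\Ex\epsilon^4)^{1/4}+\Sor\gA/\sigma\bigr)^4\sqrt{\gB}$; the lower bound $\sigma_Y^2\ge\sigma^2$ (which is immediate from $\sigma_Y^2=\|\So\|^2+\sigma^2$) then cancels $\sigma^4$ against $\sigma_Y^{-4}$, delivering the announced bound. The main technical obstacle is the estimate of $\Var\sum A_i$: the auxiliary Viennet function $b$ is unbounded in general and only $L^2$-controlled by $\sqrt{\gB}$, so Cauchy-Schwarz must be applied precisely in the form above to extract $\sqrt{\gB}$ while the uniform bound $\|\So\|_\infty\le\Sor\gA$ from~\ref{mo:no:as:ba:ii} furnishes the eighth-power moment of $\So(\rRe_0)$ on the other side of the inequality.
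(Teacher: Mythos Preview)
Your strategy is essentially the paper's: reduce to $\cV^c$, apply Chebyshev, control the variance via the Viennet-type bound~\eqref{dd:le:var:e3}, use Cauchy--Schwarz together with Lemma~\ref{dd:le:b} to extract $\sqrt{\gB}$, and finish with $\|\So\|_\infty\le\Sor\gA$ and $\sigma_Y^2\ge\sigma^2$. The paper applies~\eqref{dd:le:var:e3} directly to $h=Y^2$ and then Cauchy--Schwarz to $\Ex(Y_0^4 b)$, whereas you first split $Y_i^2$ into the three pieces $A_i,B_i,C_i$; your decomposition is cleaner because it isolates the only genuinely mixing part $A_i=\So(\rRe_i)^2$, which is uniformly bounded, so Cauchy--Schwarz needs no moment of $\epsilon$ beyond the fourth.

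There is, however, a discrepancy in your last step. After combining the three variance bounds and dividing by $\sigma_Y^4\ge\sigma^4$, you obtain (up to constants) $n^{-1}\sqrt{\gB}\bigl[(\Sor\gA/\sigma)^4+(\Sor\gA/\sigma)^2+\Ex\epsilon^4\bigr]$, which you yourself package as a multiple of $n^{-1}\sqrt{\gB}\bigl[(\Ex\epsilon^4)^{1/4}+\Sor\gA/\sigma\bigr]^4$. This is exponent~$4$, not the exponent~$2$ in the lemma's statement, so ``delivering the announced bound'' is not accurate as written. Note that the paper's own one-line proof arrives at $16\,n^{-1}(\Ex Y_1^4/\sigma_Y^4)^{1/2}\sqrt{\gB}$, which after their estimate $\Ex Y_1^4/\sigma_Y^4\le 32[\cdot]^4$ indeed gives exponent~$2$; but obtaining $(\Ex Y_1^4)^{1/2}$ from Cauchy--Schwarz applied to $\Ex(Y_0^4 b)$ would in fact require $(\Ex Y_0^8)^{1/2}$, hence $\Ex\epsilon^8$, which is not assumed. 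So the exponent~$2$ in the stated bound appears to rest on a step that is not fully justified in the paper either. Your argument is correct and self-contained, and the resulting $n^{-1}$ rate (which is all that condition~\ref{mo:me:as:est:A3} needs) is established; just be explicit that your bound carries exponent~$4$ rather than~$2$.
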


Considering the event $\Omega$
given in \eqref{mo:me:le:ms:omega} it is easily seen that
$\cV\subset\Omega$ and hence, taking into account the last assertion together with Proposition
\ref{dd:r:pr:dd:as1}, the conditions \ref{mo:me:as:est:A1},
\ref{mo:me:as:est:A2} and  \ref{mo:me:as:est:A3} are satisfied.  Thereby, the next assertion is an immediate consequence of  Proposition \ref{an:pr:dd:hpen} and we omit its proof.

\begin{theo}\label{id:r:t:fad}Under the assumptions of Proposition \ref{dd:r:co:dd:as1} and the condition \eqref{dd:r:pr:dd:as1:e1}. Select the dimension parameter
  $\hDi$ as given by \eqref{mo:me:de:whm} with  $\hpen:= 1152 \maxnormsup^2\Di n^{-1}\widehat\sigma_Y^2$. There exists a numerical constant $C$ and a finite constant $\zeta(\Sor\gA,\rNoL,\maxnormsup,\Ex\epsilon^6)$ depending only on the  quantities $\Sor\gA$, $\rNoL$, $\maxnormsup$ and $\Ex\epsilon^6$ such that for all $n\geq n_{\aSy}$
  with $\oRa[n_{\aSy}]\leq 1$ we have
  \begin{eqnarray*}
    \Rif{\hDiSo[\hDi]}{\Socwr}\leq C[\Sor^2\vee
    \maxnormsup^2\sigma_Y^2\vee
    \zeta(\Sor\gA,\rNoL,\maxnormsup,\Ex\rNo^{6})]\; \oRa.
  \end{eqnarray*}
\end{theo}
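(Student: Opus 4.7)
The plan is to reduce the claim to an application of Proposition \ref{an:pr:dd:hpen}, by producing a suitable \emph{theoretical} penalty sequence $(\pen[1],\dotsc,\pen[n])$ that is sandwiched by the empirical $(\hpen[1],\dotsc,\hpen[n])$ on the event $\cV=\{\tfrac12\leq\widehat\sigma_Y^2/\sigma_Y^2\leq\tfrac32\}$, and then verifying in turn conditions \ref{mo:me:as:est:A1}, \ref{mo:me:as:est:A2} and \ref{mo:me:as:est:A3} for that theoretical penalty. Throughout, we shall use that $\sigma_Y^2=\Ex Y^2=\HnormV{\So}^2+\rNoL^2\leq \Sor^2\gA^2+\rNoL^2$ for all $\So\in\Socwr$ by \ref{mo:no:as:ba:ii}, so $\sigma_Y^2$ is bounded uniformly over $\Socwr$.

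First I would set $\pen:=576\,\maxnormsup^2\sigma_Y^2\,\Di n^{-1}$, so that $\hpen=2\cdot 576\,\maxnormsup^2\widehat\sigma_Y^2\Di n^{-1}$ satisfies $\pen\leq\hpen\leq 3\pen$ on $\cV$; hence $\cV\subset\Omega$ with $\Omega$ defined in \eqref{mo:me:le:ms:omega}. Condition \ref{mo:me:as:est:A1} is then immediate with $\delta:=576\maxnormsup^2(\Sor^2\gA^2+\rNoL^2)$ since $\pen/\Di$ is constant in $\Di$.

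Next, since $\pen/6=96\maxnormsup^2\sigma_Y^2\Di n^{-1}\geq 48\maxnormsup^2\sigma_Y^2\Di n^{-1}$, Corollary \ref{dd:r:pr:dd:as1} applied under the mixing tail condition \eqref{dd:r:pr:dd:as1:e1} yields
\begin{equation*}
\sup_{\So\in\Socwr}\Ex\max_{\oDi\leq m\leq n}\vectp{\HnormV{\hDiSo-\DiSo}^2-\pen/6}\leq C n^{-1}(\rNoL+\Sor\gA)^2\bigl\{\zeta(\Sor\gA,\rNoL,\maxnormsup,\gB,\Ex\rNo^6)+L\bigr\},
\end{equation*}
which delivers \ref{mo:me:as:est:A2}. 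For \ref{mo:me:as:est:A3}, the inclusion $\cV\subset\Omega$ gives $P(\Omega^c)\leq P(\cV^c)$ and Lemma \ref{dd:r:l:re} bounds $P(\cV^c)$ by $91 n^{-1}\sqrt{\gB}\bigl((\Ex\rNo^4)^{1/4}+\Sor\gA/\rNoL\bigr)^2$; since the admissible collection has size $\DiMa=n$, the product $\DiMa P(\Omega^c)$ is bounded by a finite constant $\kappa$ depending only on $\Sor\gA,\rNoL,\gB,\Ex\rNo^4$.

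With the three conditions in hand for all $n\geq n_o\vee n_{\aSy}$, Proposition \ref{an:pr:dd:hpen} gives
\begin{equation*}
\Rif{\hDiSo[\hDi]}{\Socwr}\leq(298+2\kappa)(\delta\vee\Sor^2\vee\Delta)\,\oRa,
\end{equation*}
and collecting the constants yields the stated bound. The only nontrivial piece of bookkeeping is checking the chain $\cV\subset\Omega$ with the chosen multiplicative constants in $\pen$ versus $\hpen$; everything else is a direct application of Corollary \ref{dd:r:pr:dd:as1}, Lemma \ref{dd:r:l:re} and Proposition \ref{an:pr:dd:hpen}. I do not expect a genuine obstacle here, since the combinatorics of the coupling and the Talagrand-type concentration have already been absorbed into those statements; the theorem is essentially an assembly result.
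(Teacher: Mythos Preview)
Your proposal is correct and follows exactly the route the paper takes: the paper explicitly omits the proof, stating that with $\cV\subset\Omega$, Lemma \ref{dd:r:l:re} together with Corollary \ref{dd:r:pr:dd:as1} verify \ref{mo:me:as:est:A1}--\ref{mo:me:as:est:A3}, whence the result is an immediate consequence of Proposition \ref{an:pr:dd:hpen}. Your choice $\pen=576\maxnormsup^2\sigma_Y^2\Di n^{-1}$ (half the constant in $\hpen$) is precisely what is needed for the inclusion $\cV\subset\Omega$, and you have correctly noted the need for $n\geq n_o$ coming from Corollary \ref{dd:r:pr:dd:as1}.
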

We shall emphasise that the last assertion establishes the
minimax-optimality of the  fully data-driven estimator $\hDiSo[\hDi]$
over all classes $\Socwr$. Therefore, the estimator is called adaptive.
%%% Local Variables: 
%%% mode: latex
%%% TeX-master: "_0DP_NPE_dep"
%%% End: 

%======================================================================================================================
%                                                                 
% Title: Simulations
%
% ======================================================================================================================
\section{Simulation study}\label{s:sim}
In this section we illustrate the performance of the proposed data-driven estimation procedure by means of a simulation study. As competitors we consider two widely used approaches, namely model selection and cross-validation, which we briefly introduce next. 
Following a model selection approach (see for example \cite{ComteRozenholc2002} in the context of dependent data) the dimension parameter is selected as following
\begin{equation*}
\widehat\Di_{MS}:=\argmin_{1\leq m\leq \DiMa}\set{-\HnormV{\hDiSo[m]}^2+cmn^{-1}\widehat\sigma_Y^2}.
\end{equation*}
We shall emphasize that this procedure relies on the contrast $-\HnormV{\hDiSo[m]}^2$ rather than $\contr$ (see equation \eqref{mo:me:de:wtm}) used in the approach studied in this paper. Moreover, the penalty term in both selection procedures involves a constant $c$ which has been calibrated in advance by a simulation study. A popular alternative provides a cross validation approach. Exploiting that the estimated coefficients satisfy $\hfSo_j=n^{-1}\sum_{i=1}^n\psi_j(Z_i)$, for $j\geq 1$, we consider the cross validation criterium given  by 
\begin{equation*}
CV(m):=\int_{[0,1]}\hDiSo[m]^2(x)dx-\frac{2}{n(n-1)}\sum_{i=1}^n\sum_{j=1}^\Di\sum_{k\ne i}\psi_j(Z_k)\bas_j(Z_i).
\end{equation*}
The dimension parameter is then selected as $\widehat\Di_{CV}=\argmin_{1\leq m\leq \DiMa}CV(m)$. Considering the orthonormal series estimator $\hDiSo[\Di]$ we denote by $\hDiSo[MS]:=\hDiSo[\widehat\Di_{MS}]$ and $\hDiSo[CV]:=\hDiSo[\widehat\Di_{CV}]$ the fully data-driven estimator based on a dimension parameter choice using the model selection and the cross-validation approach, respectively. Moreover, $\hDiSo[\hDi]$ denotes the orthogonal series estimator with $\hDi$ given as in  \eqref{mo:me:de:whm}.  In addition we compare the three fully data-driven estimators with the oracle estimator $\hDiSo[O]:=\hDiSo[\Di_o]$ where the dimension parameter $\aDi$ minimizes the integrated squared error (ISE), that is $\Di_o:=\argmin_{m\geq 1}\HnormV{\hDiSo[\Di]-\So}$. Obviously this choice is not feasible in practice.

In the following we report the performance of the four estimation procedures given independent as well as dependent observations. Therefore we make use of the framework introduced by \cite{GannazWintenberger2010} which has also been studied, for example, by \cite{BertinKlutchnikoff2014}. In the simulations we generate observations $Z_1,\dotsc,Z_n$ according to the following three different weak-dependence cases with the same marginal absolutely continuous distribution $F$.
\begin{description}
\item[Case 1] The $Z_i$ are given by $F^{-1}(U_i)$  for $1\leq i \leq n$ on $[0,1]$ where the $\set{U_i}_{i=1}^n$ are i.i.d. uniform random variables on $[0,1]$. 
\item[Case 2] The $Z_i$ are given by $F^{-1}(G(Y_i))$ where $G(y):=\tfrac{2}{\pi}\arcsin(\sqrt y)$ and the $Y_i$ are defined by $Y_1=G^{-1}(U_1)$ and recursively, for any $i\geq2$, $Y_i=T(Y_{i-1})$ with $T(y)=4y(1-y)$.
\item[Case 3] The $Z_i$ are given by $F^{-1}(G(Y_i))$ where $G$ is the marginal distribution of $Y_i$ (see for details \cite{GannazWintenberger2010}) and the $Y_i$, $i\in\Zz$ is given by 
\begin{equation*}
Y_i=2(Y_{i-1}+Y_{i+1})/5+5\zeta_i/21,
\end{equation*}
with $\set{\zeta_i}_{i\in\Zz}$ is an i.i.d. sequence of Bernoulli variables with parameter 1/2. The computation of $Z_i$'s variable is based on the method developed in \cite{DoukhanTruquet2007}.
\end{description}

Throughout the simulation study we consider the orthogonal series estimator based on the trigonometric basis. We repeat the estimation procedure for each of the four dimension selection procedures on 501 generated samples of size $n=$100, 1000, 10000. However we present only the results for $n=1000$ since in the other cases the findings were similar. 

\subsection{Non-parametric density estimation}\label{s:sim:iid:d}
We consider the estimation of two different density functions. The first one is a mixture of  two Gaussian distributions, that is  
\begin{equation*}
f_1(x)=C\left(\frac{3}{10}\phi_{0.5;0.1}(x)+\frac{1}{4}\phi_{0.7;0.06}(x)\right)\indicset{[0,1]}
\end{equation*}
where $\phi_{\mu;\sigma}$ stands for the density of a normal distribution with mean $\mu$ and standard deviation $\sigma$. The second one is defined by
\begin{equation*}
f_2(x)=C\left(4(1+|5(x-1/2)|)\right)^{-3/2}\indicset{[0,1]}.
\end{equation*}
In the both cases the numerical constant $C$ is the normalizing factor. The observations $X_1,\dotsc,X_n$ are generated according to the three cases of weak-dependence with the same marginal density $f_1$ or $f_2$. 

Figure \ref{fig:dd:density:fct1} and \ref{fig:dd:density:fct2} represent the overall behaviour of the data-driven estimator $\hDiSo[\hDi]$ of the density functions $f_1$ and $f_2$, respectively, for the three considered cases of weak-dependence. More precisely, in each figure the point-wise median and the 5\% and 95\% point-wise percentile are depicted. The quality of the estimator is visually reasonable. In addition Table \ref{sim:dd:density} reports the empirical mean and standard deviation of the ISE over the 501 Monte-Carlo repetitions.  As expected the oracle estimator $\hDiSo[O]$ outperforms the data-driven estimators. However, the increase of  the estimation error for the data-driven procedures is rather small. Moreover  the data-driven estimator $\hDiSo[\hDi]$ studied in this paper and the model selection based estimator $\hDiSo[MS]$ perform better than the cross validation procedure for both densities and all three cases of weak-dependence. Surprisingly, the selected values $\hDi$ and $\hDi_{MS}$ coincided in at least four out of the 501 Monte-Carlo repetitions for each density and each of three cases of weak-dependence, which explains the identical values in Table \ref{sim:dd:density}.  

\begin{figure}[!ht]
\centering
\includegraphics[scale=.8]{./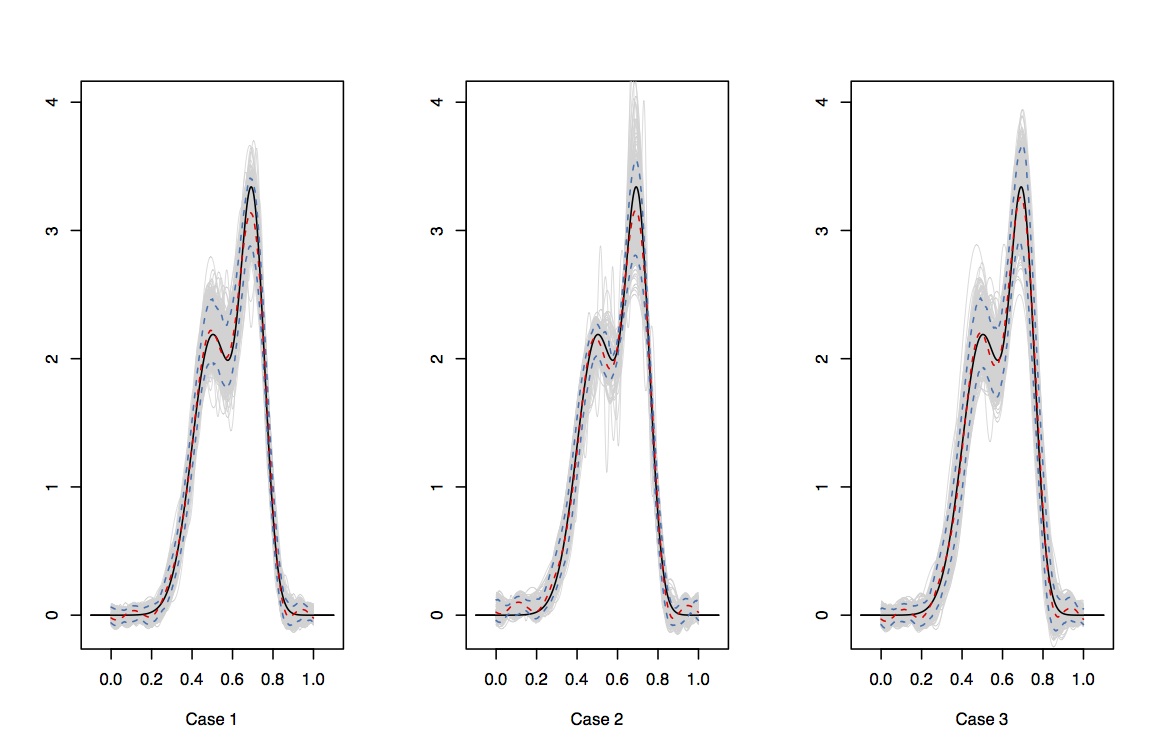}
\caption{The grey graphs depict the Monte-Carlo realisations of the data-driven estimator $\hDiSo[\hDi]$ for the density $f_1$ in the three cases of weak-dependence. The solid line corresponds to the true function, the red dashed line and the blue dashed lines represent, respectively, the point-wise median and the 5\% and 95\% point-wise percentile of the 501 replications.}
\label{fig:dd:density:fct1}
\end{figure}

\begin{table}[!ht]
\begin{tabular}{c|c|c|c|c|c}
  \hline
& &$\hDiSo[O]$ &$\hDiSo[\hDi]$& $\hDiSo[MS]$ & $\hDiSo[CV]$\\ 
  \hline
\multirow{3}{*}{$f_1$}&Case 1 & 0.0112 (0.0065) & 0.0142 (0.0089) & 0.0142 (0.0089) & 0.0178 (0.0140) \\ 
&Case 2 & 0.0102 (0.0084) & 0.0129 (0.0123) & 0.0128 (0.0119) & 0.0151 (0.0155) \\ 
&Case 3 & 0.0188 (0.0138) & 0.0213 (0.0148) & 0.0213 (0.0148) & 0.0242 (0.0169) \\ \hline
\multirow{3}{*}{$f_2$}&Case 1 & 0.0110 (0.0037) & 0.0153 (0.0053) & 0.0153 (0.0053) & 0.0159 (0.0076) \\ 
&Case 2 & 0.0123 (0.0071) & 0.0177 (0.0110) & 0.0178 (0.0108) & 0.0232 (0.0197) \\ 
&Case 3 & 0.0158 (0.0071) & 0.0210 (0.0087) & 0.0211 (0.0087) & 0.0223 (0.0118) \\ 
   \hline
\end{tabular}
\caption{Empirical mean (and standard deviation) of the ISE over the 501 Monte-Carlo simulations of sample of size $n=1000$ for the oracle and the three different data-driven estimators of the densities $f_1$ and $f_2$ in the three cases of weak-dependence.}
\label{sim:dd:density}
\end{table}

\begin{figure}[!ht]
\centering
\includegraphics[scale=.8]{./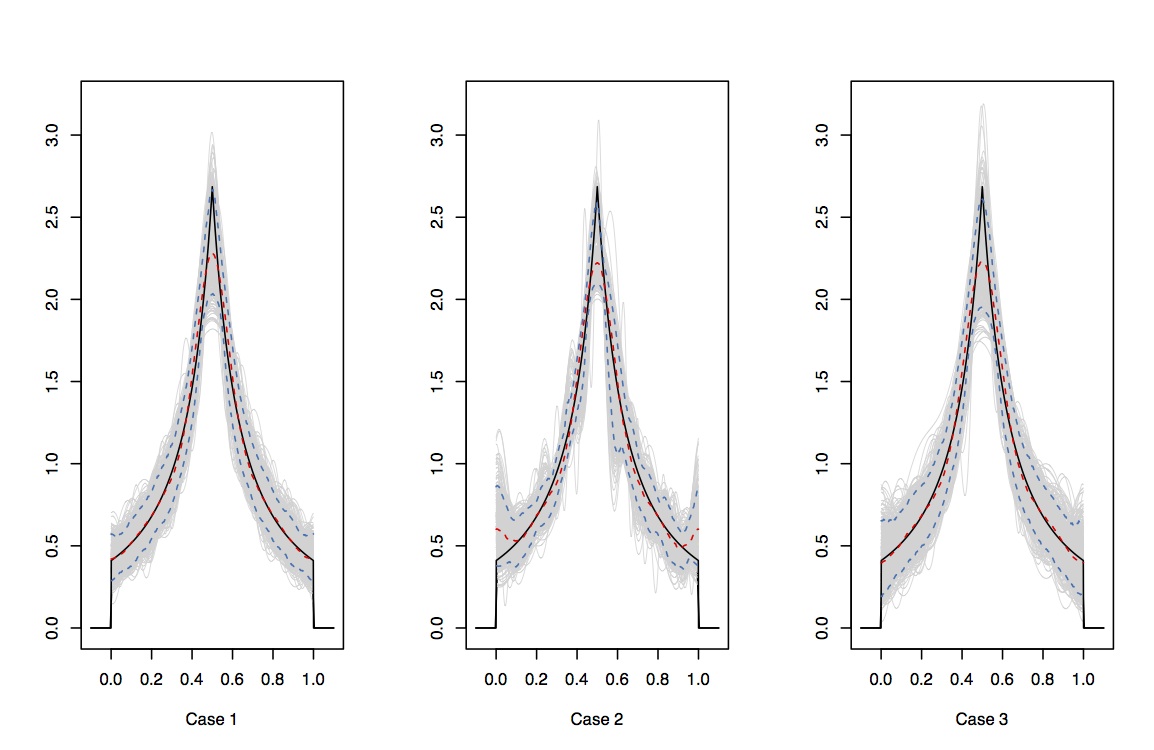}
\caption{The grey graphs depict the Monte-Carlo realisations of the data-driven estimator $\hDiSo[\hDi]$ for the density $f_2$ in the three cases of weak-dependence. The solid line corresponds to the true function, the red dashed line and the blue dashed lines represent, respectively, the point-wise median and the 5\% and 95\% point-wise percentile of the 501 replications.}
\label{fig:dd:density:fct2}
\end{figure}

\subsection{Non-parametric regression estimation}\label{s:sim:reg}
Two different regression functions are considered. The first one is a Doppler function 
\begin{equation*}
f_1(x)=(x(1-x))^{1/2}\sin\big(\tfrac{2.6\pi}{x+0.3}\big)\indicset{[0,1]}
\end{equation*}
and the second one is a mixture of a sinus function and a indicator function defined by  
\begin{equation*}
f_2(x)=\sin(4x)\indicset{[0,1/4]} +\indicset{]1/4,1]}.
\end{equation*}
In the both cases the error terms are independently and identically standard normally distributed and the noise level is set to $\sigma=0.5$. The explanatory random variables $U_1,\dotsc,U_n$ are generated according to the three cases of weak-dependence with identical marginal uniform distribution on the interval $[0,1]$. 

\begin{table}[!ht]
\begin{tabular}{c|c|c|c|c|c}
  \hline
& & $\hDiSo[O]$ &$\hDiSo[\hDi]$& $\hDiSo[MS]$ & $\hDiSo[CV]$\\  
  \hline
\multirow{3}{*}{$f_1$}&Case 1 &0.0306 (0.0091) & 0.0369 (0.0111) & 0.0369 (0.0111) & 0.0340 (0.0099) \\ 
&Case 2 & 0.0309 (0.0116) & 0.0375 (0.0146)  & 0.0375 (0.0146) & 0.0343 (0.0122) \\ 
&Case 3 & 0.0332 (0.0098) & 0.0392 (0.0109) & 0.0392 (0.0109) & 0.0370 (0.0106) \\ \hline
\multirow{3}{*}{$f_2$}&Case 1 & 0.0251 (0.0054) & 0.0318 (0.0081) & 0.0318 (0.0081) & 0.0354 (0.0122) \\ 
&Case 2 & 0.0235 (0.0064) & 0.0310 (0.0098) & 0.0310 (0.0098) & 0.0366 (0.0137) \\ 
&Case 3 & 0.0297 (0.0091) & 0.0372 (0.0139) & 0.0372 (0.0139) & 0.0388 (0.0133) \\ 
   \hline
\end{tabular}
\caption{Empirical mean (and standard deviation) of the ISE over the 501 Monte-Carlo simulations of sample of size $n=1000$ for the oracle and the three different data-driven estimators of the regressions $f_1$ and $f_2$ in the three cases of weak-dependence.}
\label{sim:dd:reg}
\end{table}

Figure \ref{fig:dd:reg:fct1} and \ref{fig:dd:reg:fct2} represent the overall behaviour of the data-driven estimator $\hDiSo[\hDi]$ of the regression functions $f_1$ and $f_2$, respectively, for the three considered cases of weak-dependence. The quality of the estimator is again visually reasonable. As in the density estimation case, the Table \ref{sim:dd:reg} reports the empirical mean and standard deviation of the ISE over the 501 Monte-Carlo repetitions. The findings are the same as for the density estimation problem with the only exception that for the regression function $f_1$ the cross validation approach performs slightly better than the other two data-driven procedures. We shall emphasize that again the selected values $\hDi$ and $\hDi_{MS}$ coincided in at least 99\% of the Monte-Carlo repetitions for each regression function and each of three cases of weak-dependence. This explains the identical value in Table \ref{sim:dd:reg} for the model selection based estimator $\hDiSo[MS]$ and the data-driven estimator $\hDiSo[\hDi]$ studied in this paper.

\begin{figure}[!ht]
\centering
\includegraphics[scale=.8]{./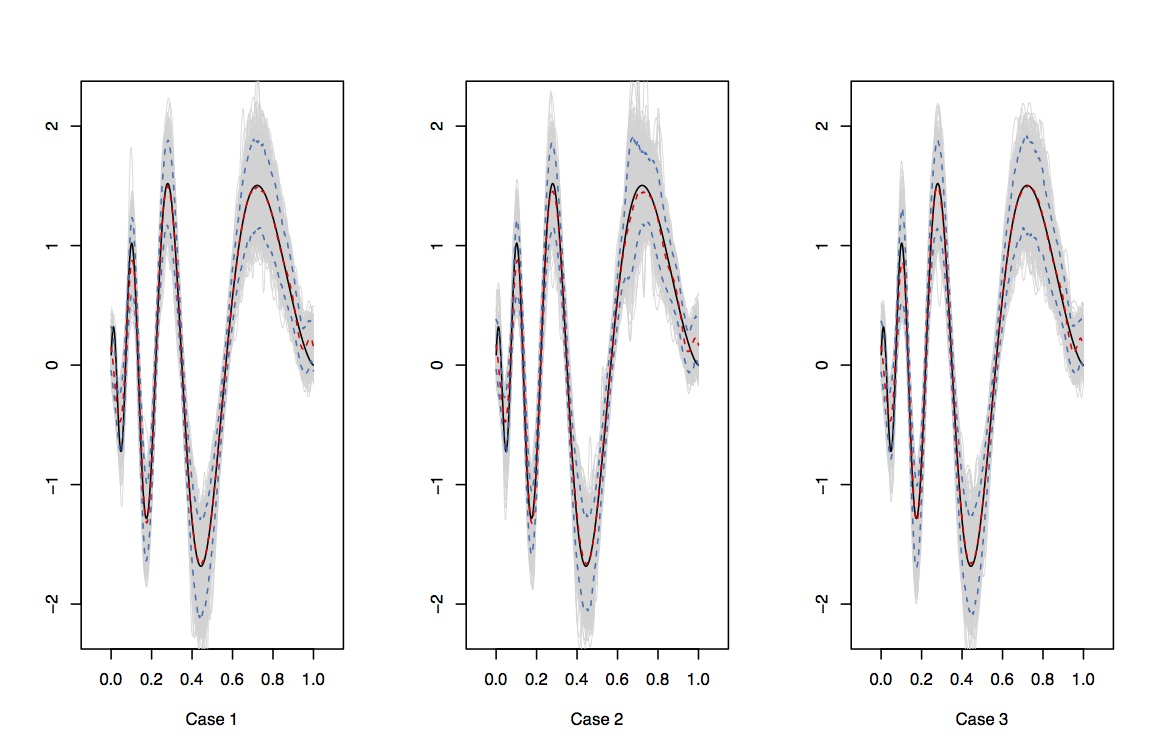}
\caption{The grey graphs depict the Monte-Carlo realisations of the data-driven estimator $\hDiSo[\hDi]$ for the regression $f_1$ in the three cases of weak-dependence. The solid line corresponds to the true function, the red dashed line and the blue dashed lines represent, respectively, the point-wise median and the 5\% and 95\% point-wise percentile of the 501 replications.}
\label{fig:dd:reg:fct1}
\end{figure}
\begin{figure}[!ht]
\centering
\includegraphics[scale=.8]{./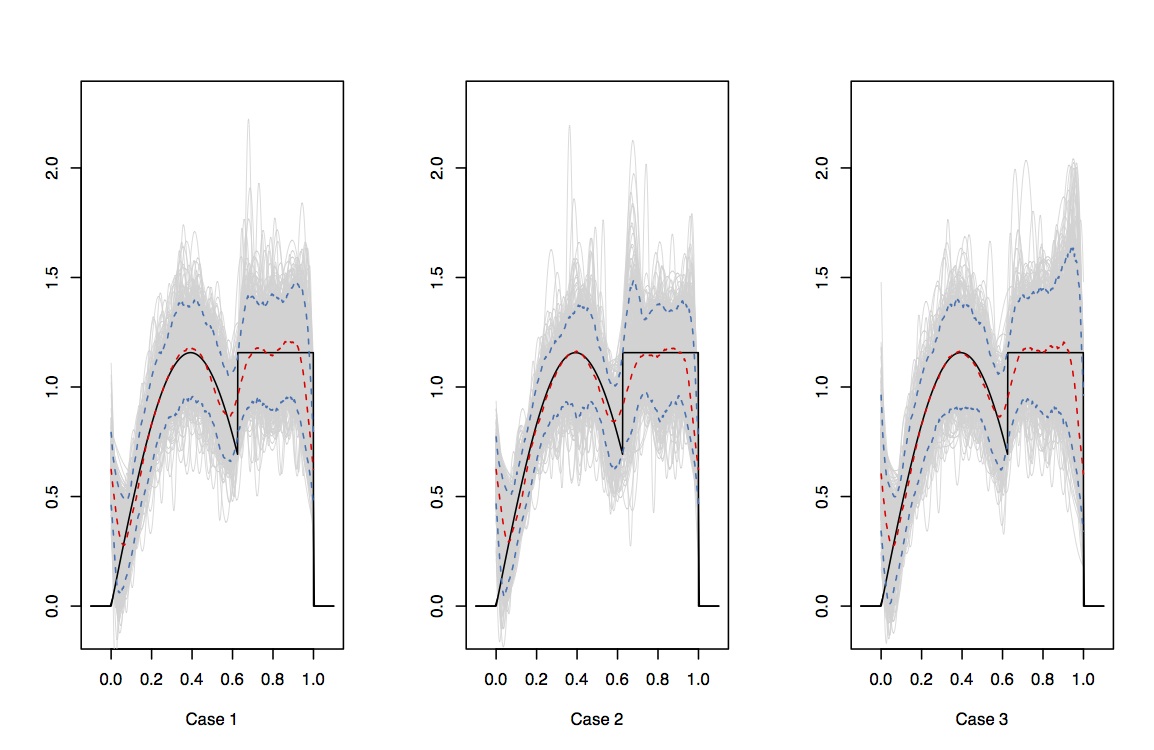}
\caption{The grey graphs depict the Monte-Carlo realisations of the data-driven estimator $\hDiSo[\hDi]$ for the regression $f_2$ in the three cases of weak-dependence. The solid line corresponds to the true function, the red dashed line and the blue dashed lines represent, respectively, the point-wise median and the 5\% and 95\% point-wise percentile of the 501 replications.}
\label{fig:dd:reg:fct2}
\end{figure}
%%% Local Variables:
%%% mode: latex
%%% TeX-master: "_0DP_NPE_dep"
%%% End:

\paragraph{Conclusions and perspectives.} In this work we present a data-driven non-parametric estimation procedure of a density and a regression function in the presence of dependent data that can attain minimax-optimal rates for independent data. Obviously, the data-driven non-parametric estimation in errors in variables models as, for example, deconvolution problems or instrumental variable regressions, are only one amongst the many interesting questions for further research and we are currently exploring this topic.
\paragraph{Acknowledgments.}
This work was supported by the IAP research network no.\ P7/06 of the
Belgian Government (Belgian Science Policy), by the "Fonds Sp\'eciaux
de Recherche'' from the Universit\'e catholique de Louvain and by the
ARC contract 11/16-039 of the "Communauté française de Belgique",
granted by the Académie universitaire Louvain. 
% --------------------------------------------------------------------
% <<Appendix>>
% --------------------------------------------------------------------
\setcounter{subsection}{0}
\appendix
\section{Appendix:  Proofs of Section \ref{s:mo}}\label{a:mo}
%======================================================================================================================
%                                                                 
% Title: Proofs of section Model assumptions
% Author: Jan JOHANNES, Nicolas Asin
% 
% Email: jan.johannes@ensai.fr nicolas.asin@uclouvain.be
% Date: %%ts latex start%%[2015-03-19 Thu 00:22]%%ts latex end%%
% Main-TeX-File: t in der form "Name"
%
% =====================================================================================================================

\begin{proof}[Proof of Lemma \ref{an:le:dd:re}]Keeping in mind the identity $\HnormV{\hDiSo[k]-\So}^2=\HnormV{\hDiSo[k]-\DiSo[k]}^2+\HnormV{\DiSo[k]-\So}^2$
 for any $ k\in\Nz$, we obtain:
\begin{multline}\label{an:le:dd:re:pr:e1}
 \Ex\left(\indicset{\Omega^c}\HnormV{\hDiSo[\tDi]-\So}^2\right)= \Ex\left(\indicset{\Omega^c}\set{\HnormV{\hDiSo[\tDi]-\DiSo[\tDi]}^2+\HnormV{\DiSo[\tDi]-\So}^2}\right)\\
 \leq \Ex\left(\indicset{\Omega^c}\set{\HnormV{\hDiSo[n]-\DiSo[n]}^2+\HnormV{\So}^2}\right)
\end{multline}
since $\HnormV{\hDiSo[k]-\DiSo[k]}^2\leq \HnormV{\hDiSo[n]-\DiSo[n]}^2$ and  $\HnormV{\DiSo[k]-\So}^2\leq\HnormV{\So}^2$ for all $1\leq k\leq \DiMa$. Considering the first right hand
side term we have 
\begin{equation}\label{an:le:dd:re:pr:e2}
\Ex\left(\HnormV{\hDiSo[n]-\DiSo[n]}^2\indicset{\Omega^c}\right)
\leq \Ex\big\{\vectp{\HnormV{\hDiSo[n]-\DiSo[n]}^2-\pen[n]/6}\big\}+\tfrac{1}{6}\pen[\DiMa]P(\Omega^c)
\end{equation}
The assertion follows now by combination of \eqref{an:le:dd:re:pr:e1}
and \eqref{an:le:dd:re:pr:e2} together with the conditions
\ref{mo:me:as:est:A1} and \ref{mo:me:as:est:A2}, and
$\HnormV{\So}^2\leq\Sor^2$, for all $\So\in\Socwr$, which completes the proof.
\end{proof}

%%% Local Variables: 
%%% mode: latex
%%% TeX-master: "_0DP_NPE_dep"
%%% End: 
 
\section{Appendix:  Proofs of Section \ref{s:id}}\label{a:id}
%======================================================================================================================
%                                                                 
% Title: Proofs of section Independent observations
% Author: Jan JOHANNES, Nicolas Asin
% 
% Email: jan.johannes@ensai.fr nicolas.asin@uclouvain.be
% Date: %%ts latex start%%[2015-03-21 Sat 19:19]%%ts latex end%%
% Main-TeX-File: t in der form "Name"
%
% =====================================================================================================================
\subsection{Appendix:  Proofs of Section \ref{s:id:d}}\label{a:id:d}
\begin{proof}[Proof of Proposition \ref{id:d:p:ub}]
In the case of independent observations it holds obviously that
\begin{equation}\label{id:d:pr:var}
\sum_{j=1}^\Di\Var\set{\frac{1}{n}\sum_{i=1}^n
  \bas_j(X_i)}=n^{-1}\sum_{j=1}^\Di\Var\{\bas_j(X)\}\leq n^{-1}\Ex
\sum_{j=1}^\Di\bas^2_j(X)\leq n^{-1}\Di
\maxnormsup^2
\end{equation}
where we have exploited the assumption \ref{mo:no:as:ba:i}. Consequently,
we have for $n,m\geq1$ that 
\begin{equation*}
\Rif{\hDiSo}{\Socwr\cap\cD}\leq n^{-1}\Di\maxnormsup^2 +
\Sow_\Di\Sor^2\leq(\maxnormsup^2+\Sor^2)\max(\Di n^{-1},\Sow_\Di)=(\maxnormsup^2+\Sor^2)\mRa.
\end{equation*}
Keeping in mind that the  dimension
parameter  $\oDi$  given in \eqref{mo:me:de:ra}, minimises the last upper risk bound, we get \eqref{id:d:p:ub:e1} which completes the proof.
\end{proof}

\begin{proof}[Proof of Proposition \ref{id:d:p:lb}]
Given $\zeta:=\eta\min(\Sor-1,
(4\gA )^{-1})$ and $\alpha_n:=\oRa /(\oDi) \leq (n \eta)^{-1}$ based on the definition of $\eta$ we consider the function $\So:= 1+
(\zeta\alpha_n)^{1/2}\sum_{1\leq j\leq \oDi}\fSo_j\bas_j$. We will show that for any
$\theta:=(\theta_j)\in\{-1,1\}^{\oDi}$, the function $\So_\theta:=
1+ \sum_{1\leq j\leq \oDi} \theta_j\fSo_j\bas_j$ belongs to
$\Socwr\cap\cD$ and is hence a possible candidate of the density. 
 We denote by $\So_\theta^n$ the
joint density of an iid.\ $n$-sample from $\So_\theta$ and by
$\Ex_\theta$ the expectation with respect to  the joint density
$\So_\theta^n$. Furthermore, for $0<j\leq\oDi$ and each $\theta$
we introduce $\theta^{(j)}$ by $\theta^{(j)}_{l}=\theta_{l}$ for $j\ne
l$ and $\theta^{(j)}_{j}=-\theta_{j}$.  The key argument of this proof
is the following reduction scheme. If $\tSo$ denotes an estimator of
$\So$ then we conclude
\begin{multline}\label{eq:443_1}
\Rif{\tSo}{\Socwr\cap\cD} \geq
  \max_{\theta\in  \{-1,1\}^{\oDi}} \Ex_\theta\HnormV{\tSo -\So_\theta}^2\geq  \frac{1}{2^{{\oDi}}}\sum_{\theta\in \{-1,1\}^{\oDi}}\Ex_\theta\HnormV{\tSo -\So_\theta}^2\\
  \geq \frac{1}{2^{{\oDi}}}\sum_{0<j\leq\oDi}\frac{1}{2}\sum_{\theta\in
    \{-1,1\}^{\oDi}}\Bigl\{\Ex_{{\theta}}|[\tSo-\So_\theta]_j|^2+\Ex_{{\theta^{(j)}}}|[\tSo-\So_{\theta^{(j)}}]_j|^2
  \Bigr\}.
\end{multline}
by using that for each $0<j\leq\oDi$ and any
function $F: \{-1,1\}^{\oDi} \to \Rz$, it holds
\[\sum_{\theta\in\{-1,1\}^{\oDi}}\So(\theta) = \sum_{\theta\in\{-1,1\}^{\oDi}}\So(\theta^{(j)}).\]
Below we show furthermore that for all $n\geq 2$ we have
\begin{equation}\label{eq:443}
  \Bigl\{\Ex_{{\theta}}|[\tSo-\So_\theta]_j|^2+\Ex_{{\theta^{(j)}}}|[\tSo-\So_{\theta^{(j)}}]_j|^2
  \Bigr\} \geq 
  \frac{\zeta}{8}\alpha_n.
\end{equation} 
From the last lower bound and the reduction scheme, by employing the definition of $\zeta$ and $\alpha_n$, we
obtain the result \eqref{id:d:p:lb:e}, that is
\begin{equation*}
\Rif{\tSo}{\Socwr\cap\cD} \geq \frac{1}{2^{{\oDi}}}\sum_{\theta\in \{-1,1\}^{\oDi}}\sum_{0<j\leq\oDi}\frac{1}{2}\frac{\zeta}{4}\alpha_n
  = \frac{\zeta}{4}  \alpha_n \oDi =\frac{\eta}{8}\, \min(\Sor-1,
(4\gA\maxnormsup^2 )^{-1})\, \oRa.
\end{equation*}

To conclude the proof, it remains to check \eqref{eq:443} and
$\So_\theta\in \Socwr\cap\cD$ for all $\theta\in\{-1,1\}^{\oDi}$. The
latter is easily verified if $\So\in \Socwr\cap\cD$.  In order to show that
$\So\in \Socwr\cap\cD$, we first notice that $\So$ integrates to
one. Moreover, $f$ is non-negative because $\normInf{\sum_{0<j\leq\oDi}
\fSo_j\bas_j} \leq1/2$, and $\wnormV{\So}^2\leq\Sor$, which can be
realised as follows. From the assumption \ref{mo:no:as:ba:ii} it follows
\begin{equation*}\normInf{\sum_{j=1}^{\oDi} \fSo_j\bas_j}^2 \leq
  \normInf{\sum_{j=1}^{\oDi}\Sow_j\bas_j^2}\;
  \bigg(\sum_{j=1}^{\oDi}\Sow_j^{-1}\fSo_j^2\bigg)\leq
  \gA^2 \bigg(\zeta\alpha_n\sum_{j=1}^{\oDi}\Sow_j^{-1}\bigg).
\end{equation*}
Since $\Sow^{-1}$ is monotonically increasing the definition of $\zeta$, $\alpha_n$
and $\eta$ implies
\begin{equation}\label{pr:theo:lower:n:e1}
 \normInf{\sum_{j=1}^{\oDi} \fSo_j\bas_j}^2 \leq  \gA^2\zeta\alpha_n {\oDi}\Sow_{\oDi}^{-1}
\leq (\eta/4) \Sow_{\oDi}^{-1} \alpha_n \oDi=
\eta\Sow_{\oDi}^{-1}\oRa/4\leq 1/4
\end{equation}
as well as $\wnormV{\So}^2 \leq 1+\zeta \Sow_{\oDi}^{-1}\alpha_n \oDi \leq { 1+\zeta/\eta\leq r}$. It remains to show~\eqref{eq:443}. Consider the Hellinger affinity
$\rho(\So_{\theta}^n,\So_{\theta^{(j)}}^n)= \int
\sqrt{\So_{\theta}^n}\sqrt{\So_{\theta^{(j)}}^n}$, then we obtain
for any estimator $\tSo$ of $\So$ that
\begin{align*}
  \rho(\So_{\theta}^n,\So_{\theta^{(j)}}^n)
  &\leqslant \big( \int
  \frac{|[\tSo-\So_{\theta^{(j)}}]_j|^2}{|[\So_\theta-\So_{\theta^{(j)}}]_j|^2}
  \So_{\theta^{(j)}}^n\big)^{1/2} + \big( \int
  \frac{|[\tSo-\So_{\theta}]_j|^2}{|[\So_\theta-\So_{\theta^{(j)}}]_j|^2}
  \So_{\theta}^n\big)^{1/2}.
\end{align*}
Rewriting the last estimate we obtain
\begin{equation}\label{pr:theo:lower:n:e2}
  \Bigl\{\Ex_{{\theta}}|[\tSo-\So_\theta]_j|^2+\Ex_{{\theta^{(j)}}}|[\tSo-\So_{\theta^{(j)}}]_j|^2 \Bigr\} \geq  \frac{1}{2} |[\So_\theta-\So_{\theta^{(j)}}]_j|^2 \rho^2(\So_{\theta}^n,\So_{\theta^{(j)}}^n). \end{equation}
Next we bound from below the Hellinger affinity $\rho(\So_{\theta}^n,\So_{\theta^{(j)}}^n)$. Therefore, we consider first the Hellinger distance 
\begin{multline*}H^2(\So_{\theta},\So_{\theta^{(j)}}) 
= \int\frac{|\So_{\theta}-\So_{\theta^{(j)}}|^2}{\big(\sqrt\So_{\theta}+\sqrt\So_{\theta^{(j)}}\big)^2}\leq
\frac{1}{2} \HnormV{\So_{\theta}-\So_{\theta^{(j)}} }^2 
= 2
  |\fSo_j|^2 \leq { \frac{2\zeta}{\eta\, n}},
\end{multline*}
where we have used that $\alpha_n\leq(n\eta)^{-1}$ and $\So_\theta\geq 1/2$ because
$|\sum_{0<j\leq\oDi} [\So_\theta]_j\bas_j|\leq 1/2$ (see \eqref{pr:theo:lower:n:e1}). {Therefore, the definition of $\zeta$ implies
$H^2(\So_{\theta},\So_{\theta^{(j)}}) \leq 2/n$.  By using the
independence, i.e.,
$\rho(\So_{\theta}^n,\So_{\theta^{(j)}}^n)=\rho(\So_{\theta},\So_{\theta^{(j)}})^n$,
together with the identity
$\rho(\So_{\theta},\So_{\theta^{(j)}})=1-\frac{1}{2}H^2(\So_{\theta},\So_{\theta^{(j)}})$
it follows $\rho(\So_{\theta}^n,\So_{\theta^{(j)}}^n) \geq (1-
n^{-1})^n\geq 1/4$ for all $n\geq2$.} By combination of the last
estimate with \eqref{pr:theo:lower:n:e2} we obtain \eqref{eq:443}
which completes the proof.\end{proof}

\begin{proof}[Proof of Proposition \ref{id:d:p:co}]
Keeping in mind Remark \ref{id:ka:rem} we intend to apply Talagrand's inequality
(Lemma \ref{id:ka:l:talagrand}) where we need to compute the quantities $h$, $H$ and
$v$ verifying the three required inequalities. Consider first $h$
where due to  the assumption \ref{mo:no:as:ba:i}
\begin{equation}\label{prop:adaptivity:density:e1}\sup_{t\in\Bz_m}\normInf{\nu_t}^2
=\normInf{\sum_{j=1}^m\bas_j^2} \leq \maxnormsup^2 m=:h^2.
\end{equation}
Consider next $H$ where
\begin{equation}\label{prop:adaptivity:density:e2}
\Ex\sup_{t\in\Bz_m}|\overline{\nu_t}|
=\big(\Ex\HnormV{\hSo_m-\So_m}^2 \big)^{1/2}=\big(\sum_{j=1}^m\Var(\hfou{\So}_j) \big)^{1/2}
\leq \big[\Di n^{-1}\maxnormsup^2\big]^{1/2}=:H.
\end{equation}
Consider finally $v$. Due to assumption \ref{mo:no:as:ba:ii}  for all $\So\in\Socwr$, we have 
\begin{equation}\label{prop:adaptivity:density:e3}
        \sup_{t\in\Bz_m}\Ex|\nu_t(\dOb)|^2
        =\sup_{t\in\Bz_m}\Ex|\sum_{j=1}^m\fou{t}_j\bas_j(\dOb)|^2
\leq
	\normInf{\So}\leq \Sor\gA=:v.
\end{equation}
The assertion follows from Lemma \ref{id:ka:l:talagrand} by using the quantities $h$, $H$ and
$v$ given in \eqref{prop:adaptivity:density:e1},
\eqref{prop:adaptivity:density:e2} and
\eqref{prop:adaptivity:density:e3}, respectively and by employing the definition of $\zeta$,
which completes the proof.
\end{proof}

\subsection{Appendix:  Proofs of Section \ref{s:id:r}}\label{a:id:r}
\begin{proof}[Proof of Proposition \ref{id:r:p:ub}]In the case of independent observations it holds obviously that
\begin{equation}\label{id:r:pr:var}
\sum_{j=1}^\Di\Var\set{\frac{1}{n}\sum_{i=1}^n\rOb_i \bas_j(\rRe_i)}
\leq n^{-1}\Ex
\rOb^2\sum_{j=1}^\Di\bas^2_j(\rRe)\leq n^{-1}\Di
\maxnormsup^2 (\sigma^2+\HnormV{\So}^2)
\end{equation}
where we have exploited assumption \ref{mo:no:as:ba:i} and
$\sigma_Y^2:=\Ex Y^2=\sigma^2+\HnormV{\So}^2$. Keeping mind that $\HnormV{\So}^2\leq\Sor^2$
for all $\So\in\Socwr$ 
we have for $n,m\geq1$, $\Rif{\hDiSo}{\Socwr}\leq 
(\maxnormsup^2 (\sigma^2+\Sor^2)+\Sor^2)\mRa.$

Employing further that the  dimension
parameter  $\oDi$  given in \eqref{mo:me:de:ra}  minimises the last upper risk bound, ie., the term
$\mRa=\max(\Di n^{-1},\Sow_\Di)$, with
respect to the dimension parameter, we obtain \eqref{id:r:p:ub:e1} which completes the proof.
\end{proof}

\begin{proof}[Proof of Proposition \ref{id:r:p:lb}]
Given $\zeta:=\eta\min(\Sor^2,\sigma^2/2)$ and $\alpha_n:=\oRa /\oDi \leq (n \eta)^{-1}$ due \eqref{id:r:p:lb:e1} we consider the function $\So:=(\zeta\alpha_n)^{1/2}\sum_{j=1}^{\oDi}\bas_j$. We will show that for any
$\theta:=(\theta_j)_{j=1}^{\oDi}\in\{-1,1\}^{\oDi}$, the function $\So_\theta:=
\sum_{j=1}^{\oDi} \theta_j[\So]_j\bas_j$ belongs to
$\Socwr$ and is hence a possible candidate of the regression
function. For a fixed $\theta$ and under the hypothesis that the
regression function is $\So_\theta$, we denote by $P_\theta^n$ the
 joint distribution of the observation $\{(Y_i,U_i)\}_{i=1}^n$ and by
$\Ex_\theta$ the expectation with respect to  this
distribution. Furthermore, for $1\leq j\leq\oDi$ and each $\theta$
we introduce $\theta^{(j)}$ by $\theta^{(j)}_{l}=\theta_{l}$ for $j\ne
l$ and $\theta^{(j)}_{j}=-\theta_{j}$.  The key argument of this proof
is the following reduction scheme \eqref{eq:443_1} . From the lower bound \eqref{eq:443} and the reduction scheme \eqref{eq:443_1}, by employing the definition of $\zeta$ and $\alpha_n$, we
obtain the result \eqref{id:r:p:lb:e}, that is
\begin{equation*}
\Rif{\tSo}{\Socwr} \geq \frac{1}{2^{{\oDi}}}\sum_{\theta\in \{-1,1\}^{\oDi}}\sum_{j=1}^{\oDi}\frac{1}{2}\frac{\zeta}{2}\alpha_n
  = \frac{\zeta}{4}  \alpha_n \oDi =\frac{\eta}{8}\, \min(2\Sor^2,
\sigma^2)\, \oRa.
\end{equation*}

To conclude the proof, it remains to check \eqref{eq:443} and
$\So_\theta\in \Socwr$ for all $\theta\in\{-1,1\}^{\oDi}$. The
latter is easily verified if $\So\in \Socwr$, which can be realised
as  follows. By applying successively that $\Sow$ is monotonically
increasing, that $\oRa \Sow_{\oDi}\leq \eta^{-1}$ due
\eqref{id:r:p:lb:e1} and, hence $\zeta\alpha_n
\oDi\Sow_{\oDi}=\zeta \oRa \Sow_{\oDi} \leq \Sor^2$ 
we obtain $\wnormV{\So}^2 \leq \zeta\alpha_n \oDi\Sow_{\oDi}
\leq \Sor^2$ which proves the claim. 

Next we bound from below the Hellinger affinity
$\rho(P_{\theta}^n,P_{\theta^{(j)}}^n)$ using the well-known
relationship $\rho(P_{\theta}^n,P_{\theta^{(j)}}^n) \geq 1- (1/2) KL(P_{\theta}^n,P_{\theta^{(j)}}^n)$
between the Kullback-Leibler divergence and the Hellinger affinity. We
will show that $KL(P_{\theta}^n,P_{\theta^{(j)}}^n)\leq 1$, and hence
$\rho(P_{\theta}^n,P_{\theta^{(j)}}^n) \geq 1/2$ which together with
\eqref{pr:theo:lower:n:e2} and
$|[\So_\theta-\So_{\theta^{(j)}}]_j|^2= 4[\So]_j^2=4\zeta\alpha_n$
implies \eqref{eq:443}. Therefore, consider the Kullback-Leibler
divergence between $P_{\theta}^n$ and  $ P_{\theta^{(j)}}^n$. Recall, that  for a fixed $\theta$ and under the hypothesis that the
regression function is $\So_\theta$, the observations
$\{Y_i\}_{i=1}^n$ are conditional independent  given the regressors $\{U_i\}_{j=1}^n$
and for each $1\leq i\leq n$ the conditional distribution of
$Y_i$  given the regressor $U_i$ is normal with conditional mean $f_\theta(U_i)$
and conditional variance $\sigma^2$. Therefore, we have \[\log
\frac{dP_{\theta}^n(\{(Y_i,U_i)\}_{i=1}^n)}{dP_{\theta^{(j)}}^n(\{(Y_i,U_i)\}_{i=1}^n)}=
\sum_{i=1}^n \frac{2\zeta\alpha_n}{\sigma^2} \bas_j^2(U_i) + \sum_{i=1}^n \frac{2\theta_j(\zeta\alpha_n)^{1/2}}{\sigma^2}\bas_j(U_i)(Y_i-\So_\theta(U_i)).\]
Taking the expectation $\Ex_{\theta}$ with respect to $P_{\theta}^n$ leads to
$KL(P_{\theta}^n,P_{\theta^{(j)}}^n)=  2\zeta\alpha_nn/\sigma^2$. By
employing that $\alpha_n n\leq1/\eta$ and $\zeta/(\eta\sigma^2)\leq
1/2$ we obtain that  $KL(P_{\theta}^n,P_{\theta^{(j)}}^n)\leq1$ which
shows the claim and  completes the proof.
\end{proof}

\begin{proof}[Proof of Proposition \ref{id:r:p:co}]
The key argument of the next
assertion is again Talagrand's inequality.  However, a direct
application employing  $\sup_{t\in\Bz_m} |\overline{\nu_t}|^2 = 
\HnormV{\hSo_m-\So_m}^2$  with $\overline{\nu_t}=\frac{1}{n}\sum_{i=1}^n\left[\nu_t(\rNo_i,\rRe_i)-\Ex\left(\nu_t(\rNo_i,\rRe_i)\right)
\right]$ and $\nu_t(\rNo,\rRe)=\sum_{j=1}^{\Di}\fou{t}_j(\rNoL\rNo+\So(\rRe))\bas_j(\rRe)$ is
not possibly noting that $\rNo$ and hence $\nu_t$ are generally not uniformly
bounded. Therefore, let us introduce  $\rNo^b:=\rNo\indic{|\rNo|\leq
  n^{1/4}}-\Ex\rNo\indic{|\rNo|\leq  n^{1/4}}$ and $\rNo^u:=\rNo-\rNo^b=\rNo\indic{|\rNo|>
   n^{1/4}}-\Ex\rNo\indic{|\rNo|> n^{1/4}}$. Setting 
 $\overline{\nu_t^b}=\frac{1}{n}\sum_{i=1}^n\big[\nu_t(\rNo_i^b,\rRe_i)-\Ex\big(\nu_t(\rNo_i^b,\rRe_i)\big)\big]$, $\nu^u_t(\rNo^u,\rRe):=\sum_{j=1}^{\Di}\fou{t}_j\rNoL\rNo^u\bas_j(\rRe)$
and  $\overline{\nu_t^u}=\frac{1}{n}\sum_{i=1}^n\left[\nu_t^u(\rNo_i^u,\rRe_i)-\Ex\left(\nu_t^u(\rNo_i^u,\rRe_i)\right)
\right]$ we have obviously
$\overline{\nu_t}=\overline{\nu_t^b}+\overline{\nu_t^u}$. 
Consequently, exploiting the elementary inequality
$|{\overline{\nu_t}}|^2\leq
2\set{|\overline{\nu_t^b}|^2+|\overline{\nu_t^u}|^2}$ follows that
\begin{multline}\label{id:r:dd:e1}
\Ex\vectp{\max_{\oDi\leq \Di\leq \DiMa}\{\HnormV{\hDiSo-\DiSo}^2-
  \tfrac{\pen}{6}\}}
\\\leq
{2}\Ex\vectp{\max_{\oDi\leq \Di\leq n}\{\sup_{t\in\Bz_\Di}|{\overline{\nu_t^b}}|^2-
  \tfrac{\pen}{12}\}}+2\Ex\sup_{t\in\Bz_n}|{\overline{\nu_t^u}}|^2.
\end{multline} 
We  bound separately each term on the rhs. of the last display.
Consider first the second right hand side term. Since
$\Ex(\rNo^{6})<\infty$ which implies that $\Ex(\rNo^2)\indic{\rNo^2>\eta}\leq\eta^{-2}\Ex(\rNo^{6})$ for all
$\eta>0$, it follows from the independence assumption and \ref{mo:no:as:ba:i}  that 
\begin{equation}\label{id:r:p:co:pr:u}
\Ex\sup_{t\in\Bz_n}|\overline{\nu_t^u}|^2
\leq 
\rNoL^2\maxnormsup^2 \Var(\rNo^u)\leq\rNoL^2\maxnormsup^2
\Ex\big(\rNo^2\indic{|\rNo|> n^{1/4}}\big)\leq n^{-1} \rNoL^2\maxnormsup^2\Ex(\rNo^{6}).
\end{equation}
In order to bound the second right hand side term in
\eqref{id:r:dd:e1}, we aim to  apply Talagrand's inequality (Lemma
\ref{id:ka:l:talagrand}) which necessitates the computation of the
quantities $h$, $H$ and $v$ verifying the required
inequalities. Consider first $h$. Let $\psi_j(e^b,u)=(\rNoL
        e^b+\So(u))\bas_j(u)$ and note that $|\rNo^b|\leq 2n^{1/4}$
by construction. Hence, employing \ref{mo:no:as:ba:i} we have
\begin{equation}\label{id:r:p:co:pr:h}
	\sup_{t\in\Bz_m}\inormV{v_t}^2= \sum_{j=1}^\Di\inormV{\psi_j^2}
        \leq  \maxnormsup^2 m (2\rNoL n^{1/4} +\inormV{\So})^2=:h^2.
\end{equation}
Next we compute the quantity  $H$, where due to assumption \ref{mo:no:as:ba:i}

\begin{equation*}
\Ex\sup_{t\in\Bz_m}|\overline{v_t^b}|^2
\leq \frac{1}{n} \Ex\big\{
(\rNoL\rNo^b_1+\So(\rRe_1))^2\sum_{j=1}^m\bas_j^2(\rRe_1)\big\}
\leq  \frac{m\maxnormsup^2}{n} \Ex
(\rNoL\rNo^b_1+\So(\rRe_1))^2.
 \end{equation*}
Exploiting $\Var\rNo^b\leq \Ex\big(\rNo^2\indic{|\rNo|>
  n^{1/4}}\big)\leq \Ex \rNo^2 =1$ and the independence between $\rNo$ and $\rRe$ we have $\Ex
(\rNoL\rNo^b_1+\So(\rRe_1))^2=\rNoL^2\Var\rNo^b_1+\HnormV{\So}^2\leq
\sigma^2+\HnormV{\So}^2 =\Ex \rOb^2=\sigma_Y^2$. Combining the bounds it
follows that
\begin{equation}\label{id:r:p:co:pr:H}
\Ex\sup_{t\in\Bz_m}|\overline{v_t^b}|\leq \big(\Ex\sup_{t\in\Bz_m}|\overline{v_t^b}|^2\big)^{1/2}\leq
n^{-1/2}m^{1/2}\maxnormsup \sigma_Y=:H.
 \end{equation}

It remains to calculate the third quantity $\nu$, where due to the
independence between $\epsilon$ and $U$ 
\begin{multline}\label{id:r:p:co:pr:v}
\sup_{t\in\Bz_m}\frac{1}{n}\sum_{i=1}^n \Var(v_t(\rNo^b_i,\rRe_i))
\leq \sup_{t\in\Bz_m}\Ex(v_t(\rNo^b_1,\rRe_1))^2\\
=\sup_{t\in\Bz_m}\{\rNoL^2\Var(\rNo^b)\Ex\big(\sum_{j=1}^m\fou{t}_j\bas_j(\rRe_1)\big)^2+\Ex\big(\So(\rRe_1)\sum_{j=1}^m\fou{t}_j\bas_j(\rRe_1)\big)^2\}\\
\leq \sup_{t\in\Bz_m}\{\rNoL^2\HnormV{t}^2+\inormV{\So}^2\HnormV{t}^2\}=\rNoL^2+\inormV{\So}^2=:\nu.
\end{multline}
Replacing in Lemma \ref{id:ka:l:talagrand} the constants $h$, $H$ and
$v$ by \eqref{id:r:p:co:pr:h}, \eqref{id:r:p:co:pr:H}  and
\eqref{id:r:p:co:pr:v} respectively, there exists a finite numerical constant $C>0$ such that
\begin{multline*}
\Ex\vectp{\sup_{t\in\Bz_m}|\overline{v_t^b}|^2- 6 \maxnormsup^2\sigma_Y^2mn^{-1} }\leq
C\bigg[\frac{\sigma^2+\inormV{\So}^2}{n}\exp\left(
    - \frac{m \maxnormsup^2\sigma_Y^2}{6(\sigma^2+\normInf{\So}^2)}\right)
 \\ +
  \frac{2 \maxnormsup^2m(\rNoL+\inormV{\So})^2}{n^{3/2}}\exp(-\frac{K}{2}n^{1/4}\frac{\sigma_Y}{\rNoL+\inormV{\So}})\bigg].
\end{multline*}
The last upper bound and
$\frac{\sigma^2+\normInf{\So}^2}{\sigma_Y^2}=\frac{\sigma^2+\normInf{\So}^2}{\sigma^2+\HnormV{\So}^2}\leq
2\left(\frac{\sigma+\normInf{\So}}{\sigma+\HnormV{\So}}\right)^2\leq2(1+\normInf{\So}/{\sigma})^2$
imply together the existence of a finite numerical constant $C>0$ such that
\begin{multline*}
\Ex\vectp{\max_{1\leq \Di\leq n}\{\sup_{t\in\Bz_m}|\overline{v_t^b}|^2 -6\maxnormsup^2\sigma_Y^2mn^{-1}\}}
\leq C\frac{\sigma^2+\inormV{\So}^2}{n}\big[\sum_{m=1}^n\exp\left(
  -\frac{m\maxnormsup^2}{12(1+\inormV{\So}/\sigma)^2}\right)\\+
n^{3/2}\maxnormsup^2\exp(-n^{1/4}\frac{K}{2(1+\inormV{\So}/\sigma)})\big]\end{multline*}
and hence, from $\inormV{\So}\leq \Sor\gA$ for all $\So\in\Socwr$ due
to assumption \ref{mo:no:as:ba:ii} there exists a finite constant $C(\Sor\gA,\rNoL,\maxnormsup)$ depending only on the
  quantities $\Sor\gA$, $\rNoL$ and $\maxnormsup$ such that 
\begin{equation*}
\sup_{\So\in\Socwr} \Ex\vectp{\max_{1\leq \Di\leq n}\{\sup_{t\in\Bz_m}|\overline{v_t^b}|^2
  -6\maxnormsup^2\sigma_Y^2mn^{-1}\}}\leq
n^{-1}C(\Sor\gA,\rNoL,\maxnormsup),\quad\mbox{for all }n\geq1.\end{equation*}
The assertion of Proposition \ref{id:r:p:co} follows now by 
combination of the last bound, \eqref{id:r:p:co:pr:u} and the decomposition \eqref{id:r:dd:e1}, which completes the proof.
\end{proof}

\begin{proof}[Proof of Lemma \ref{id:r:l:re}] We start the proof with
  the observation that
  $\cV^{c}\subset\set{\left|\frac{\widehat{\sigma}^2_Y}{\sigma^2_Y}-1\right|\geq\frac{1}{2}}$
  and, hence
\begin{equation*}
\proba{\cV^c}\leq\proba{\left|\frac{\widehat{\sigma}^2_Y}{\sigma^2_Y}-1\right|\geq\frac{1}{2}}=\proba{\left|n^{-1}\sum_{i=1}^n\left(\frac{Y_i^2}{\sigma_Y^2}-1\right)\right|\geq\frac{1}{2}}.
\end{equation*}
Since $\Ex Y_i^2=\sigma_Y^2$ and employing Tchebysheff's inequality 
\begin{equation*}
\proba{\left|n^{-1}\sum_{i=1}^n\left(\frac{Y_i^2}{\sigma_Y^2}-1\right)\right|\geq\frac{1}{2}}
\leq
\frac{4}{n\sigma_Y^4}\Ex{Y_1^4}\leq \frac{128}{n} \left((\Ex\epsilon^4)^{1/4}+\inormV{f}/\sigma\right)^4.
\end{equation*}
The assertion follows now by taking into account that $\inormV{f}\leq
\Sor\gA$ for all $\So\in\Socwr$, which completes the proof.  
\end{proof}

%%% Local Variables: 
%%% mode: latex
%%% TeX-master: "_0DP_NPE_dep"
%%% End: 
 
\section{Appendix: Proofs of Section \ref{s:dd}}\label{a:dd}
%======================================================================================================================
%                                                                 
% Title: Proofs of section Dependent observations
% Author: Jan JOHANNES, Nicolas Asin
% 
% Email: jan.johannes@ensai.fr nicolas.asin@uclouvain.be
% Date: %%ts latex start%%[2015-03-22 Sun 02:28]%%ts latex end%%
% Main-TeX-File: t in der form "Name"
%
% ======================================================================================================================
\subsection{Appendix:  Proofs of Section \ref{s:dd:d}}\label{a:dd:d}
\begin{proof}[\dr Proof of Lemma \ref{dd:pr:ub}]
Combining the assumption
\ref{mo:no:as:ba:i} and Lemma \ref{dd:le:var} we get a first bound for its variance,
  \begin{multline*}\label{dd:le:var:e6}
\sum_{j=1}^\Di\Var(\tfrac{1}{n}\sum_{i=1}^n\bas_j(\dOb_i))\leq \tfrac{1}{n} \Ex (\sum_{j=1}^\Di|\bas_j(\dOb_0)|^2\{1+4\sum_{k=1}^{n-1}b(\dOb_0)\})
\leq \maxnormsup^2\{1+4\sum_{k=1}^{n-1}\beta(\dOb_0,\dOb_k)\}  \Di n^{-1}.
\end{multline*}Then, the assertion \ref{dd:pr:ub:e1} is an immediate consequence.
\end{proof}

\begin{proof}[\dr Proof of Lemma \ref{dd:le:est:var}]
We start the proof with the observation that for any orthonormal
system $\{\bas_j\}_{j=1}^m$ we have $\HnormV{\sum_{j=1}^\Di\bas_j\otimes\bas_j}^2=\sum_{j=1}^\Di\sum_{l=1}^\Di|\HskalarV{\bas_j,\bas_l}|^2= m.$
Thereby, exploiting the assumption \ref{dd:as:de:i} it follows that 
\begin{equation}\label{dd:le:est:var:pr:e1}
\bigg|\sum_{j=1}^\Di\Cov(\bas_j(\dOb_0),\bas_j(\dOb_k))\bigg|\\
\leq \HnormV{\sum_{j=1}^\Di\bas_j\otimes\bas_j}\HnormV{f_{\dOb_0,\dOb_k}-f_{\dOb_0}\otimes f_{\dOb_k}}\leq \sqrt{m}\gamma_{f}.
\end{equation}
On the other hand side, following the proof of  Lemma \ref{dd:le:var}
there exists a function $b_k:\Rz\to[0,1]$ with  $\Ex b_k(\dOb_0)=\beta(\dOb_0,\dOb_k)$ such that
 \begin{equation}\label{dd:le:est:var:pr:e2}
\bigg|\sum_{j=1}^\Di\Cov(\bas_j(\dOb_0),\bas_j(\dOb_k))\bigg|\leq 2
\Ex(b_k(\dOb_0)\{\sum_{j=1}^\Di \bas_j^2(\dOb_0)\})\leq 2 m \maxnormsup^2 \beta(\dOb_0,\dOb_k)
 \end{equation}
where the last inequality follows from the assumption \ref{mo:no:as:ba:i}.
By combination of \eqref{dd:le:est:var:pr:e1}  and \eqref{dd:le:est:var:pr:e2}  we
obtain for any $0\leq K \leq n-1$
\begin{multline*}
\sum_{k=1}^{n-1}(n+1-k)\sum_{j=1}^\Di\Cov(\bas_j(\dOb_0),\bas_j(\dOb_k))
\leq \sqrt{m}\gamma_f  n K + 2 m \maxnormsup^2 n
\sum_{k=K+1}^{n-1}\beta(\dOb_0,\dOb_k) \\
= m n \{ \gamma_f K/\sqrt{m} + 2
\maxnormsup^2 \sum_{k=K+1}^{n-1}\beta(\dOb_0,\dOb_k)\}.
\end{multline*}
From the last bound and the assumption \ref{mo:no:as:ba:i}  we conclude that 
\begin{multline*}
   \sum_{j=1}^\Di  \Var(\sum_{i=1}^n\bas_j(\dOb_i))=\sum_{j=1}^\Di
   \sum_{i=1}^n\Var(\bas_j(\dOb_i))+ 2\sum_{j=1}^\Di \sum_{i=2}^n(n+1-i)\Cov(\bas_j(\dOb_1),\bas_j(\dOb_i)) \\
\leq n \Ex \{\sum_{j=1}^\Di\bas^2_j(\dOb_0)\} + 2
\sum_{k=1}^{n-1}(n-k)\big|
\sum_{j=1}^\Di\Cov(\bas_j(\dOb_0),\bas_j(\dOb_k))\big|\\
\leq n m \maxnormsup^2 + 2m n \{ \gamma_f K/\sqrt{m} + 2
\maxnormsup^2 \sum_{k=K+1}^{n-1}\beta(\dOb_0,\dOb_k)\}
\end{multline*}
which shows the assertion and completes the proof.\end{proof}

\begin{proof}[\dr Proof of Proposition \ref{dd:co:dd:as1}]
Following the construction presented in 
Section \ref{s:dd}
let $(\dOb_i)_{i\geq1}=(E_l,O_l)_{l\geq 1}$ and
$(\cou{\dOb}_i)_{i\geq1}=(\couE_l,\couO_l)_{l\geq1}$ be random vectors  satisfying the coupling properties
\ref{dd:as:cou1}, \ref{dd:as:cou2} and \ref{dd:as:cou3}. Let $n$, $p$ and $q$ be integers such that
$n=2pq$. Let us
introduce exactly in the same way
$(x_1,\dotsc,x_n)=(e_1,o_1,\dotsc,e_p,o_p)$ with
$e_l=(x_i)_{i\in\cI^e_{l}}$ and $o_l=(x_i)_{i\in\cI^o_{l}}$,
$l=1,\dotsc,p$. If we set further for any $x=(x_1,\dotsc,x_q)\in[0,1]^q$,
$\vec{v}_t(x):=(1/q)\sum_{i=1}^qv_t(x_i)$, 
then $  \tfrac{1}{n}\sum_{i=1}^n\nu_t(x_i)=\tfrac{1}{2}\set{\tfrac{1}{p}\sum_{l=1}^p \vec{v_t}(e_l)
      +\tfrac{1}{p}\sum_{l=1}^p  \vec{v_t}(o_l)}$.
 Thereby, it follows for $\overline{\nu_t}=(1/n)\sum_{i=1}^n\left[\nu_t(\dOb_i)-\Ex\left(\nu_t(\dOb_i)\right)
\right]=\skalarV{t,\hDiSo-\DiSo}$ that $ \overline{\nu_t}=:\tfrac{1}{2}\set{\overline{\nu_t^e}+\overline{\nu_t^o}}$.
Considering rather than $(X_i)_{i=1}^n$ the random variables $(\couX_i)_{i=1}^n$ we
introduce additionally
\begin{equation*}
  \cou{\overline{\nu_t}}=\tfrac{1}{2}\set{\tfrac{1}{p}\sum_{l=1}^p
    \{\vec{v_t}(\couE_l)-\Ex \vec{v_t}(\couE_l)\}+\tfrac{1}{p}\sum_{l=1}^p \{\vec{v_t}(\couO_l)-\Ex
    \vec{v_t}(\couO_l)\}} =:  \tfrac{1}{2}\set{\cou{\overline{\nu_t^e}}+\cou{\overline{\nu_t^o}}}.
\end{equation*}
Using successively Jensen's inequality, i.e., $|{\overline{\nu_t}}|^2\leq \tfrac{1}{2}\set{|\overline{\nu_t^e}|^2+|\overline{\nu_t^o}|^2}$ , $|a|^2\leq2\{|b|^2+|a-b|^2\}$, $\Bz_\Di\leq \Bz_{\DiMa}$ for all $1\leq
    \Di\leq  \DiMa$ it
follows that 
\begin{multline*}
\Ex\vectp{\max_{\oDi\leq \Di\leq \DiMa}\set{\normV{\hDiSo-\DiSo}^2-
  \tfrac{\pen}{6}}}\\\leq
\Ex\vectp{\max_{\oDi\leq \Di\leq\DiMa}\set{\sup_{t\in\Bz_\Di}|\cou{\overline{\nu_t^e}}|^2-
  \tfrac{\pen}{12}}}+\Ex\vectp{\sup_{t\in\Bz_{\DiMa}}|\cou{\overline{\nu_t^e}}-{\overline{\nu_t^e}}|^2}\\+
\Ex\vectp{\max_{\oDi\leq \Di\leq \DiMa}\set{\sup_{t\in\Bz_\Di}|\cou{\overline{\nu_t^o}}|^2-
  \tfrac{\pen}{12}}}+\Ex\vectp{\sup_{t\in\Bz_{\DiMa}}|\cou{\overline{\nu_t^o}}-{\overline{\nu_t^o}}|^2}.
\end{multline*}  
The desired assertion follows by combining the last bound and Lemma \ref{dd:le:cou2} and \ref{dd:le:cou1} below.
\end{proof}

\begin{lem}\label{dd:le:cou2} Under assumptions of Proposition \ref{dd:co:dd:as1}.  Suppose that $\gB:= 2\sum_{k=0}^\infty(k+1)\beta_k<\infty$
 and set  $\Psi(x):=\sum_{m\geq 1}^\infty x^{1/2} m^{1/2}\exp(-
m^{1/2}/(48x^{1/2}))<\infty$, for any $x>0$, and $K_n:=\gauss{ 4  \maxnormsup^2 \sqrt{\oDi} /\gamma_f}$   then
 there exists a numerical constant $C>0$ such that 
 for any $\mu_n\geq \{3+ 8 \sum_{k=K_n+1}^{q-1}\beta(X_0,X_k)\}$
 holds
\begin{multline*}
\sup_{\So\in\Socwr}\Ex\vectp{\max_{\oDi\leq \Di\leq n}\set{\sup_{t\in\Bz_\Di}|\cou{\overline{\nu_t^e}}|^2-
  6 \Di n^{-1} \maxnormsup^2 \mu_n }}\leq C n^{-1} \maxnormsup^2\bigg\{\mu_n
        \Psi\bigg(\frac{\Sor\gA\gB}{\maxnormsup^2
          \mu_n^2}\bigg)\\\hfill+  n q^2\exp\left(-\frac{n^{1/2}}{q}\frac{\mu_n^{1/2}}{144}\right)\bigg\};\\
\sup_{\So\in\Socwr}\Ex\vectp{\max_{\oDi\leq \Di\leq n}\set{\sup_{t\in\Bz_\Di}|\cou{\overline{\nu_t^o}}|^2-
  6 \Di n^{-1} \maxnormsup^2 \mu_n }}\leq C n^{-1} \maxnormsup^2\bigg\{\mu_n
        \Psi\bigg(\frac{\Sor\gA\gB}{\maxnormsup^2
          \mu_n^2}\bigg)\\\hfill+  n q^2\exp\left(-\frac{n^{1/2}}{q}\frac{\mu_n^{1/2}}{144}\right)\bigg\}.
\end{multline*}
\end{lem}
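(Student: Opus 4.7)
The plan is to exploit the coupling property \ref{dd:as:cou3}: since $\couE_1,\dotsc,\couE_p$ are iid, the quantity $\cou{\overline{\nu_t^e}}$ is a bona fide centred empirical process based on an iid sample of size $p$ indexed by $t\in\Bz_m$, so Talagrand's inequality (Lemma \ref{id:ka:l:talagrand}) applies at each fixed $m$. I will apply it for every $m\in\{\oDi,\dotsc,n\}$, then sum the resulting tail bounds and identify the series $\Psi(x)$ at $x=\Sor\gA\gB/(\maxnormsup^2\mu_n^2)$. The bound for $\cou{\overline{\nu_t^o}}$ follows by the symmetric argument applied to the odd blocks, using exactly the same three ingredients below.

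I must furnish three constants $h$, $H$, $v$. The envelope bound $h=\maxnormsup\sqrt{m}$ follows from $|\vec{v_t}(E)|\leq\sup_x|\nu_t(x)|$ and \ref{mo:no:as:ba:i}. For $H$, the identity $\sup_{t\in\Bz_m}|\cou{\overline{\nu_t^e}}|^2=\sum_{j=1}^m|\cou{\overline{\nu_{\bas_j}^e}}|^2$ combined with the iid structure of the blocks reduces $\Ex\sup_t|\cou{\overline{\nu_t^e}}|^2$ to $(pq^2)^{-1}\sum_j\Var(\sum_{i=1}^q\bas_j(\dOb_i))$; Lemma \ref{dd:le:est:var} with $n$ replaced by $q$, together with the hypothesis on $\mu_n$, the definition of $K_n$, and $m\geq\oDi$, then produces $H^2=mn^{-1}\maxnormsup^2\mu_n$. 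For the variance parameter $v$, the Viennet estimate \eqref{dd:le:var:e3} gives $\Var(\vec{v_t}(\couE_1))\leq(4/q)\Ex(|\nu_t(\dOb_0)|^2 b(\dOb_0))$ for some $b\in\sL(2,\beta,P_{\dOb_0})$; I then plan to invoke Cauchy--Schwarz, Lemma \ref{dd:le:b} with $p=2$ (which yields $\Ex b^2\leq\gB$), and the interpolation $\Ex|\nu_t|^4\leq \maxnormsup^2 m\,\Ex|\nu_t|^2\leq \maxnormsup^2 m\,\Sor\gA$ (using \ref{mo:no:as:ba:ii} and $\So\in\Socwr\cap\cD$), yielding $v=(4/q)\maxnormsup\sqrt{m\,\Sor\gA\gB}$.

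Substituting into Lemma \ref{id:ka:l:talagrand} and using $n=2pq$, the first exponential becomes $\exp(-\sqrt{m}\maxnormsup\mu_n/(48\sqrt{\Sor\gA\gB}))=\exp(-\sqrt{m}/(48\sqrt{x}))$, with prefactor $v/p$ of order $n^{-1}\maxnormsup^2\mu_n\sqrt{mx}$ (writing $\sqrt{\Sor\gA\gB}=\maxnormsup\mu_n\sqrt{x}$); summing $m$ from $\oDi$ to $n$ produces exactly the $n^{-1}\maxnormsup^2\mu_n\Psi(x)$ piece of the claimed bound. The second exponential, $\exp(-K\sqrt{n\mu_n}/(2q))$, carries a prefactor $h^2/p^2\asymp\maxnormsup^2 mq^2/n^2$; summing over $m\leq n$ and absorbing the Talagrand constant $K=(\sqrt{2}-1)/(21\sqrt{2})$ into the numerical factor $144$ produces the $n^{-1}\maxnormsup^2\cdot nq^2\exp(-\sqrt{n\mu_n}/(144q))$ piece, finishing the claimed bound.

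The step I expect to be the main obstacle is obtaining the correct variance parameter $v$. A crude $L^\infty$ bound $\normInf{\nu_t}^2\leq\maxnormsup^2 m$ inserted directly into $\Ex(|\nu_t|^2 b)$ would leave an $m$-dependence in $v$ that, once plugged into the Talagrand exponent $pH^2/(6v)$, destroys the $\sqrt{m}$-scaling needed for the series defining $\Psi(x)$ to converge. Only the $L^4$--$L^2$ interpolation combined with the $L^2$-integrability of $b$ from Lemma \ref{dd:le:b} produces an exponent linear in $\sqrt{m}$, which after rescaling by $x=\Sor\gA\gB/(\maxnormsup^2\mu_n^2)$ gives the compact, mixing-decoupled expression $\Psi(x)$ appearing in the statement.
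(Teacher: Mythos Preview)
Your proposal is correct and follows essentially the same route as the paper's proof: apply Talagrand's inequality (Lemma~\ref{id:ka:l:talagrand}) to the iid block sample $(\couE_l)_{l=1}^p$, with exactly the three ingredients you name, $h^2=\maxnormsup^2 m$, $H^2=mn^{-1}\maxnormsup^2\mu_n$ via Lemma~\ref{dd:le:est:var} applied with $q$ in place of $n$, and $v=(4/q)\maxnormsup\sqrt{m\,\Sor\gA\gB}$ via \eqref{dd:le:var:e3}, Cauchy--Schwarz, Lemma~\ref{dd:le:b}, and the $L^\infty$--$L^2$ interpolation $\Ex|\nu_t|^4\leq\normInf{\nu_t}^2\Ex|\nu_t|^2$; the paper carries out the same substitution and summation you sketch, and your identification of the interpolation step as the crux (to recover the $\sqrt{m}$-scaling in the exponent that makes $\Psi$ summable) matches the paper's argument.
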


\begin{proof}[\dr Proof of Lemma \ref{dd:le:cou2}]
We prove  the
  first assertion, the proof of the second follows exactly in the same
  way and, hence we omit the details. We shall emphasise that  $\cou{\overline{\nu_t^e}}=
p^{-1}\sum_{l=1}^p\vec{v_t}(E_l)$ where $(E_l)_{l=1}^p$ are iid.,
which we use below without further reference. 
 Keep in mind that $\vec{v_t}(x):=(1/q)\sum_{i=1}^qv_t(x_i)$ and set
  $\vec{\bas_j}(x):=(1/q)\sum_{i=1}^q\bas_j(x_i)$ for
  $x\in[0,1]^q$. In order to apply Talagrand's inequality we compute the constants $h$, $H$ and
  $v$. Consider first $h$
  where
\begin{equation}\label{dd:le:cou2:pr:e1}\sup_{t\in\cB_m}\normInf{\vec{\nu_t}}^2
=\sup_{y\in [0,1]^q}\sum_{j=1}^\Di|\tfrac{1}{q}\sum_{i=1}^q\bas_j(y_i)|^2
\leq  \maxnormsup^2 m=:h^2
\end{equation}
employing the assumption \ref{mo:no:as:ba:i}. 
 Consider next $H$. From
property \ref{dd:as:cou3}, follows that
\begin{equation*}
\Ex\sup_{t\in\cB_m}|\cou{\overline{\nu_t^e}}|^2=\sum_{j=1}^\Di \Var\{\tfrac{1}{p}\sum_{l=1}^p\vec{\bas_j}(\couE_l)\}=\tfrac{1}{p}\sum_{j=1}^\Di \Var\{\vec{\bas_j}(\couE_1)\}
\end{equation*}
and hence exploiting the definition of $\vec{\bas_j}$ and the property
\ref{dd:as:cou1}, we have 
\begin{equation}\label{dd:le:cou2:pr:e2}
\Ex\sup_{t\in\cB_m}|\cou{\overline{\nu_t^e}}|^2=\tfrac{1}{p}\sum_{j=1}^\Di \Var\{\vec{\bas_j}(\couE_1)\}=\tfrac{1}{p}\sum_{j=1}^\Di \Var\{\vec{\bas_j}(E_1)\}=\tfrac{1}{p}\sum_{j=1}^\Di \Var\{\tfrac{1}{q}\sum_{i=1}^q\bas_j(X_i)\}
\end{equation}
We employ next Lemma \ref{dd:le:est:var}, thereby under the assumptions
\ref{mo:no:as:ba:i} and \ref{dd:as:de:i} we have for all
$K\in\{0,\dotsc,q-1\}$ and for any $q\geq1$
\begin{equation*}
\sum_{j=1}^\Di \Var\{\tfrac{1}{q}\sum_{i=1}^q\bas_j(X_i)\}\leq    \frac{\Di}{q} \{\maxnormsup^2 + 2 [ \gamma K/\sqrt{\Di} + 2
\maxnormsup^2 \sum_{k=K+1}^{q-1}\beta(X_0,X_k)]\}.
\end{equation*}
Given $K_n=\gauss{ 4  \maxnormsup^2 \sqrt{\oDi} /\gamma}$
 we have  $\sum_{j=1}^\Di \Var\{\tfrac{1}{q}\sum_{i=1}^q\bas_j(X_i)\}\leq\frac{\Di}{q} \maxnormsup^2\{3/2 + 4 \sum_{k=K_n+1}^{q-1}\beta_k\}$, for all $m\geq \oDi$. 
Thereby, from \eqref{dd:le:cou2:pr:e2} follows for any 
$\mu_n\geq  \{3+ 8 \sum_{k=K_n+1}^{\infty}\beta_k\}$ that 
\begin{equation}\label{dd:le:cou2:pr:e3}
\Ex\sup_{t\in\cB_m}|\cou{\overline{\nu_t^e}}|^2\leq \frac{\Di}{n}
\maxnormsup^2  \{3+ 8 \sum_{k=K_n+1}^{\infty}\beta_k\} \leq \frac{\Di}{n}
\maxnormsup^2\mu_n=:H^2.
\end{equation}
Consider $v$. Keep in mind that $	\sup_{t\in\cB_m}\frac{1}{p}\sum_{i=1}^p
        \Var(\vec{\nu_t}(\couE_i))
=\sup_{t\in\cB_m}
\Var (\tfrac{1}{q}\sum_{i=1}^q v_t(X_i))$ due to  \ref{dd:as:cou1} and \ref{dd:as:cou3},
$\sup_{t\in\cB_m}\Ex|v_t(X_1)|^2\leq\Sor\gA$, and $\sup_{t\in\cB_m}\normInf{v_t}\leq
\Di^{1/2} \maxnormsup$ given in \eqref{prop:adaptivity:density:e3} and \eqref{prop:adaptivity:density:e1}, respectively. By applying
\eqref{dd:le:var:e3} and setting $\gB= 2\sum_{k=0}^\infty
(k+1)\beta_k$ we have
\begin{multline}\label{dd:le:cou2:pr:e4}
	\sup_{t\in\cB_m}\frac{1}{p}\sum_{i=1}^p
        \Var(\vec{\nu_t}(\couE_i))
        \leq \tfrac{4}{q}\sup_{t\in\cB_m}\{\Ex
|v_t(X_1)|^2\}^{1/2}\normInf{v_t}\{2\sum_{k=0}^\infty
(k+1)\beta_k\}^{1/2}\\\leq \tfrac{4}{q} (\Di\Sor\gA\gB)^{1/2}
\maxnormsup=:v.
\end{multline}
The assertion follows from 
 Lemma \ref{id:ka:l:talagrand} by using the quantities $h$, $H$ and
$v$ given in \eqref{dd:le:cou2:pr:e1},
\eqref{dd:le:cou2:pr:e3} and
\eqref{dd:le:cou2:pr:e4}, respectively, and by employing  the definition of $\Psi$, which completes the proof.
\end{proof}

\begin{lem}\label{dd:le:cou1} Under assumptions of Proposition \ref{dd:co:dd:as1}. We have  
\begin{equation*}
\Ex\vectp{\sup_{t\in\Bz_{n}}|{\overline{\nu_t^e}}-\cou{\overline{\nu_t^e}}|^2}\leq
4 \maxnormsup^2n\beta_{q+1},\quad \mbox{and,}\quad
\Ex\vectp{\sup_{t\in\Bz_{n}}|{\overline{\nu_t^o}}-\cou{\overline{\nu_t^o}}|^2}\leq4 \maxnormsup^2n\beta_{q+1}.
\end{equation*}
\end{lem}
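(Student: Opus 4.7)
The plan is to exploit the coupling properties \ref{dd:as:cou1}--\ref{dd:as:cou3} together with a uniform supremum bound on $\nu_t$ coming from assumption \ref{mo:no:as:ba:i}. I will only treat the even blocks since the odd case is identical by symmetry.

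First, since $E_l$ and $\couE_l$ are identically distributed by \ref{dd:as:cou1}, the centering terms in $\overline{\nu_t^e}$ and $\cou{\overline{\nu_t^e}}$ coincide, so
\begin{equation*}
\overline{\nu_t^e}-\cou{\overline{\nu_t^e}}=\frac{1}{p}\sum_{l=1}^p\bigl(\vec{v_t}(E_l)-\vec{v_t}(\couE_l)\bigr).
\end{equation*}
On the event $\{E_l=\couE_l\}$ the $l$-th summand vanishes, so I can insert the indicator $\indic{E_l\ne\couE_l}$ for free.

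Next I would bound $\sup_{t\in\Bz_n}\normInf{\nu_t}$. For $t=\sum_{j=1}^n\fou{t}_j\bas_j\in\Bz_n$, Cauchy--Schwarz together with \ref{mo:no:as:ba:i} gives $|\nu_t(y)|^2\le \sum_{j=1}^n\fou{t}_j^2\cdot\sum_{j=1}^n\bas_j^2(y)\le \maxnormsup^2 n$, and therefore $\sup_{t\in\Bz_n}|\vec{v_t}(x)|\le \maxnormsup\sqrt{n}$ uniformly in $x\in[0,1]^q$. Hence
\begin{equation*}
\sup_{t\in\Bz_n}\bigl|\vec{v_t}(E_l)-\vec{v_t}(\couE_l)\bigr|\indic{E_l\ne\couE_l}\le 2\maxnormsup\sqrt{n}\indic{E_l\ne\couE_l},
\end{equation*}
and consequently
\begin{equation*}
\sup_{t\in\Bz_n}\bigl|\overline{\nu_t^e}-\cou{\overline{\nu_t^e}}\bigr|\le \frac{2\maxnormsup\sqrt{n}}{p}\sum_{l=1}^p\indic{E_l\ne\couE_l}.
\end{equation*}

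Squaring and applying Cauchy--Schwarz to the sum of $p$ indicator functions (i.e.\ $(\sum_l\indic{E_l\ne\couE_l})^2\le p\sum_l\indic{E_l\ne\couE_l}$), then taking expectations and invoking \ref{dd:as:cou2} which yields $P(E_l\ne\couE_l)\le\beta_{q+1}$, I arrive at
\begin{equation*}
\Ex\sup_{t\in\Bz_n}\bigl|\overline{\nu_t^e}-\cou{\overline{\nu_t^e}}\bigr|^2\le \frac{4\maxnormsup^2 n}{p}\cdot p\,\beta_{q+1}=4\maxnormsup^2 n\,\beta_{q+1},
\end{equation*}
which is the first inequality. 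The same argument applied to the odd blocks, using \ref{dd:as:cou1}--\ref{dd:as:cou2} for $O_l,\couO_l$, gives the second. There is no real obstacle here; the only subtle point is recognising that the crude supremum bound $\maxnormsup\sqrt{n}$ on $\nu_t$ (which is valid since $\Bz_n$ is the full-dimensional unit ball) combined with the block-coupling probability $\beta_{q+1}$ already suffices, so no finer control on the restricted class $\Bz_\Di$ with $\Di\le n$ is needed.
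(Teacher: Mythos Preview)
Your proof is correct and follows essentially the same route as the paper: cancel the centering terms via \ref{dd:as:cou1}, insert the indicator $\indic{E_l\ne\couE_l}$, use the uniform bound $\sup_{t\in\Bz_n}\normInf{\vec{v_t}}\le\maxnormsup\sqrt{n}$ from \ref{mo:no:as:ba:i}, and then apply \ref{dd:as:cou2}. Your explicit use of $(\sum_l\indic{E_l\ne\couE_l})^2\le p\sum_l\indic{E_l\ne\couE_l}$ is the step the paper writes more tersely, but the argument is the same.
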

\begin{proof}[\dr Proof of Lemma \ref{dd:le:cou1}] Since
  $\{E_l\}_{l=1}^p$ and   $\{\couE_l\}_{l=1}^p$ are identically
  distributed due to \ref{dd:as:cou1} we have $|{\overline{\nu_t^e}}-\cou{\overline{\nu_t^e}}|=|p^{-1}\sum_{l=1}^p
    \{\vec{v_t}(E_l)-\vec{v_t}(\couE_l)\}|\leq 2\normInf{\vec{\nu_t}}\1_{\{E_l\ne
      \couE_l\}}$ and hence, by using \ref{dd:as:cou2} it follows that 
    \begin{equation*}
\Ex\vectp{\sup_{t\in\cB_{n}}|{\overline{\nu_t^e}}-\cou{\overline{\nu_t^e}}|^2}\leq
4\sup_{t\in\cB_{n}}\normInf{\vec{\nu_t}}^2 p^{-1}\sum_{l=1}^p P(E_l\ne
      \couE_l)\leq 4\sup_{t\in\cB_{n}}\normInf{\vec{\nu_t}}^2\beta_{q+1}
    \end{equation*}
which together with \eqref{dd:le:cou2:pr:e1} shows the first assertion. The proof of
the second assertion is made exactly in the same way,  and hence we
omit the details, which completes the proof.
\end{proof}

\subsection{Appendix:  Proofs of Section \ref{s:dd:r}}\label{a:dd:r}

\begin{proof}[\dr Proof of Lemma \ref{dd:r:p:ub}]
Exploiting the assumption
\ref{mo:no:as:ba:i} and Lemma \ref{dd:le:var} we obtain,
  \begin{multline}\label{dd:r:le:var:e1}
\sum_{j=1}^\Di\Var(\tfrac{1}{n}\sum_{i=1}^n(\rNoL\rNo_i+\So(\rRe_i))\bas_j(\rRe_i))
\leq \frac{\rNoL^2 m}{n} + \frac{1}{n}
\inormV{\So}^2\inormV{\sum_{j=1}^\Di\bas_j^2}\{1+4\sum_{k=1}^{n-1}\beta(\rRe_0,\rRe_k)\}\\
\leq [\rNoL^2 +\inormV{\So}^2\maxnormsup^2\{1+4\sum_{k=1}^{n-1}\beta(\rRe_0,\rRe_k)\} ] \Di n^{-1}.
\end{multline}
Replacing \eqref{id:d:pr:var} by \eqref{dd:r:le:var:e1}, the assertion follows as in the proof of Proposition \ref{id:d:p:ub}.
\end{proof}

\begin{proof}[\dr Proof of Lemma \ref{dd:r:l:est:var}]
We start the proof with the observation that for any orthonormal
system $\{\bas_j\}_{j=1}^m$ we have $\HnormV{\sum_{j=1}^\Di\bas_j\otimes\bas_j}^2=\sum_{j=1}^\Di\sum_{l=1}^\Di|\skalarV{\bas_j,\bas_l}|^2
 = m$. Thereby, from \ref{dd:as:re:i} follows 
\begin{multline}\label{dd:r:l:est:var:pr:e1}
\bigg|\sum_{j=1}^\Di\Cov(\So(\rRe_0)\bas_j(\rRe_0),\So(\rRe_k)\bas_j(\rRe_k))\bigg|\\
\leq \HnormV{\sum_{j=1}^\Di\bas_j\otimes\bas_j}\HnormV{\So\otimes\So \{f_{\rRe_0,\rRe_k}-\1\otimes\1\}}
\leq \sqrt{m}\;\HnormV{\So}^2\gamma
\end{multline}
On the other hand side, keeping in mind \ref{mo:no:as:ba:i} there exists a function $b_k:\Rz\to[0,1]$ with  $\Ex b_k(\rRe_0)=\beta(\rRe_0,\rRe_k)$ due to Lemma 4.1 in \cite{Viennet1997} such that 
 \begin{multline*}\label{dd:r:l:est:var:pr:e2}
\bigg|\sum_{j=1}^\Di\Cov(\So(\rRe_0)\bas_j(\rRe_0),\So(\rRe_k)\bas_j(\rRe_k))\bigg|\leq 2
\Ex(b_k(\rRe_0)\{\So^2(\rRe_0)\sum_{j=1}^\Di \bas_j^2(\rRe_0)\})\\
\leq 2 m \inormV{\So}^2\maxnormsup^2 \beta(\rRe_0,\rRe_k)
 \end{multline*}
which together with \eqref{dd:r:l:est:var:pr:e1} implies for any $0\leq K \leq n-1$
\begin{multline*}
\sum_{k=1}^{n-1}(n+1-k)\sum_{j=1}^\Di\Cov(\So(\rRe_0)\bas_j(\rRe_0),\So(\rRe_k)\bas_j(\rRe_k))
\leq \sqrt{m}\inormV{\So}^2 \gamma  n K \\+ 2 m \inormV{\So}^2\maxnormsup^2 n
\sum_{k=K+1}^{n-1}\beta(\rRe_0,\rRe_k) = m n\, \inormV{\So}^2\{ \gamma K/\sqrt{m} + 2
\maxnormsup^2 \sum_{k=K+1}^{n-1}\beta(\rRe_0,\rRe_k)\}.
\end{multline*}
From the last bound and $\sum_{j=1}^\Di \sum_{i=1}^n \Var(\So(\rRe_i)\bas_j(\rRe_i))\leq n m \maxnormsup^2 \HnormV{\So}^2$ due to \ref{mo:no:as:ba:i}  follows the desired assertion.
\end{proof}

\begin{proof}[\dr Proof of Proposition \ref{dd:r:co:dd:as1}]
Recalling the notations given in the proof of Proposition \ref{id:r:p:co}, our proof starts with the observation that a combination of \eqref{id:r:dd:e1} and \eqref{id:r:p:co:pr:u} leads to 
\begin{equation}\label{tralalallal}
\Ex\vectp{\max_{\oDi\leq \Di\leq \DiMa}\{\HnormV{\hDiSo-\DiSo}^2- \tfrac{\pen}{6}\}}\\
 \leq{2}\Ex\vectp{\max_{\oDi\leq \Di\leq n}\{\sup_{t\in\Bz_\Di}|{\overline{\nu_t^b}}|^2- \tfrac{\pen}{12}\}}+2n^{-1} \rNoL^2\maxnormsup^2\Ex(\rNo^{6}).
\end{equation} 
In order to bound the first rhs. term we use a construction similar to that in the proof of Proposition \ref{dd:co:dd:as1}. Let $(\rRe_i)_{i\geq1}=(E_l,O_l)_{l\geq1}$ and
$(\cou{\rRe}_i)_{i\geq1}=(\couE_l,\couO_l)_{l\geq1}$ be random vectors  satisfying the coupling properties
\ref{dd:as:cou1}, \ref{dd:as:cou2} and \ref{dd:as:cou3}. Introduce exactly in the same manner $(\rNo^b_i)_{i\geq 1}=(\vec{\rNo}^{\;be}_l,\vec{\rNo}^{\;bo}_l)_{l\geq1}$. 
 If
we set $\vec{v}_t(x,y):=(1/q)\sum_{i=1}^qv_t(x_i,y_i)$, 
then for $n=2pq$ it follows
\begin{equation*}
 \overline{\nu_t^b} =
 \tfrac{1}{2}\big\{\tfrac{1}{p}\sum_{l=1}^p\{\vec{v}_t(\vec{\rNo}^{\;be}_l,E_l)-\Ex\vec{v}_t(\vec{\rNo}^{\;be}_l,E_l)
 \}+ \tfrac{1}{p}\sum_{l=1}^p\{\vec{v}_t(\vec{\rNo}^{\;bo}_l,O_l)-\Ex\vec{v}_t(\vec{\rNo}^{\;bo}_l,O_l)\}\big\}=:\tfrac{1}{2}\{\overline{\nu_t^{be}}+\overline{\nu_t^{bo}}\}. 
\end{equation*}
Considering  the random variables
$(\cou{\rRe}_i)_{i\geq1}$ rather than $(\rRe_i)_{i\geq1}$ we introduce in addition 
\begin{equation*}
\cou{ \overline{\nu_t^b}} =
 \tfrac{1}{2}\big\{\tfrac{1}{p}\sum_{l=1}^p\{\vec{v}_t(\vec{\rNo}^{\;be}_l,\cou{E}_l)-\Ex\vec{v}_t(\vec{\rNo}^{\;be}_l,\cou{E}_l)
 \} +\tfrac{1}{p}\sum_{l=1}^p\{\vec{v}_t(\vec{\rNo}^{\;bo}_l,\cou{O}_l)-\Ex\vec{v}_t(\vec{\rNo}^{\;bo}_l,\cou{O}_l)\}\big\}=:\tfrac{1}{2}\{\cou{\overline{\nu_t^{be}}}+\cou{\overline{\nu_t^{bo}}}\}.
\end{equation*}
As in the proof of Proposition \ref{dd:co:dd:as1}, it follows that
\begin{multline*}
\Ex\vectp{\max_{\oDi\leq \Di\leq \DiMa}\{\sup_{t\in\Bz_\Di}|{\overline{\nu_t^b}}|^2-  \tfrac{\pen}{12}\}}\leq
\Ex\vectp{\max_{\oDi\leq \Di\leq\DiMa}\{\sup_{t\in\Bz_\Di}|\cou{\overline{\nu_t^{be}}}|^2-
  \tfrac{\pen}{24}\}}+\Ex\vectp{\sup_{t\in\Bz_{\DiMa}}|\cou{\overline{\nu_t^{be}}}-{\overline{\nu_t^{be}}}|^2}\\+
\Ex\vectp{\max_{\oDi\leq \Di\leq \DiMa}\{\sup_{t\in\Bz_\Di}|\cou{\overline{\nu_t^{bo}}}|^2-
  \tfrac{\pen}{24}\}}+\Ex\vectp{\sup_{t\in\Bz_{\DiMa}}|\cou{\overline{\nu_t^{bo}}}-{\overline{\nu_t^{bo}}}|^2}.
\end{multline*} 
The desired assertion follows by combining \eqref{tralalallal}, the last bound, Lemma \ref{dd:le:cou1} and \ref{dd:r:le:cou2} .
\end{proof}

\begin{lem}\label{dd:r:le:cou2} Let the assumptions  \ref{mo:no:as:ba:i},  \ref{mo:no:as:ba:ii},
 \ref{dd:as:cou1}, \ref{dd:as:cou3}, and  \ref{dd:as:re:i} be
 satisfied. Suppose that $\gB:= 2\sum_{k=0}^\infty(k+1)\beta_k<\infty$.
Let $K_n:=\gauss{ \maxnormsup^2 \HnormV{\So}^2
  \sqrt{\oDi} /(\gamma \Sor^2\gA^2)}$  and $\mu_n\geq \{3+ 8
\sum_{k=K_n+1}^{\infty}\beta_k\}$. There exist a finite constant
$\zeta(\Sor\gA,\rNoL,\maxnormsup,\gB)$ depending on the quantities
$\Sor\gA$, $\rNoL$, $\maxnormsup$ and $\gB$ only and a numerical constant $C>0$ such that 
 for any 
 holds
\begin{multline*}
 \sup_{\So\in\Socwr}\Ex\vectp{\max_{\oDi\leq \Di\leq n}\sup_{t\in\Bz_\Di}|\cou{\overline{\nu_t^{be}}}|^2-
  6\tfrac{\Di}{n}\sigma_Y^2\maxnormsup^2  \mu_n }\\\hfill\leq C n^{-1} \maxnormsup^2(\rNoL+\Sor\gA)^2\bigg\{\zeta(\Sor\gA,\rNoL,\maxnormsup,\gB)+  n^{3/2} q^2\exp\left(-\frac{n^{1/4}}{q}\frac{1}{576(1+\Sor\gA/\rNoL)}\right)\bigg\};\\
\hspace{-7.2cm}\sup_{\So\in\Socwr}\Ex\vectp{\max_{\oDi\leq \Di\leq
    n}\sup_{t\in\Bz_\Di}|\cou{\overline{\nu_t^{bo}}}|^2-6\tfrac{\Di}{n}\sigma_Y^2\maxnormsup^2
  \mu_n }\\\hfill\leq C n^{-1} \maxnormsup^2(\rNoL+\Sor\gA)^2\bigg\{\zeta(\Sor\gA,\rNoL,\maxnormsup,\gB)+  n^{3/2} q^2\exp\left(-\frac{n^{1/4}}{q}\frac{1}{576(1+\Sor\gA/\rNoL)}\right)\bigg\}.
\end{multline*}
\end{lem}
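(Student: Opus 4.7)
The proof will mirror that of Lemma \ref{dd:le:cou2}, with the main modifications arising from (i) the presence of the regression function $\So$ and truncated noise $\rNo^b$ inside $\nu_t$, and (ii) the use of Lemma \ref{dd:r:l:est:var} (rather than Lemma \ref{dd:le:est:var}) to control the variance. I will prove only the first assertion; the second is obtained by interchanging the roles of even and odd blocks. By property \ref{dd:as:cou3} the coupled vectors $(\cou E_l)_{l=1}^p$ are iid, by \ref{dd:as:cou1} each $\cou E_l$ has the same distribution as $E_l$, and the blocks $(\vec\rNo^{\;be}_l)_{l=1}^p$ of truncated noise are iid and independent of the $\cou E_l$; thus the pairs $\{(\vec\rNo^{\;be}_l,\cou E_l)\}_{l=1}^p$ are iid, which makes Talagrand's inequality (Lemma \ref{id:ka:l:talagrand}) directly applicable to $\cou{\overline{\nu_t^{be}}}$.

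The core task is the computation of the three Talagrand constants $h$, $H$, $v$ corresponding to the class $\{\vec v_t : t\in\Bz_m\}$ with $\vec v_t(\vec e^{\;b},x)=(1/q)\sum_{i=1}^q\sum_{j=1}^m\fou{t}_j(\rNoL e^b_i+\So(x_i))\bas_j(x_i)$. For $h$ I would combine $|\rNo^b|\leq 2n^{1/4}$, assumption \ref{mo:no:as:ba:i}, and $\inormV{\So}\leq\Sor\gA$ from \ref{mo:no:as:ba:ii} to obtain $\sup_{t\in\Bz_m}\inormV{\vec v_t}^2\leq \maxnormsup^2 m(2\rNoL n^{1/4}+\Sor\gA)^2=:h^2$. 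For $H$ I would use the iid property of the $\cou E_l$ together with \ref{dd:as:cou1} to rewrite $\Ex\sup_{t\in\Bz_m}|\cou{\overline{\nu_t^{be}}}|^2=p^{-1}\sum_{j=1}^m\Var\big\{(1/q)\sum_{i=1}^q(\rNoL\rNo^b_i+\So(\rRe_i))\bas_j(\rRe_i)\big\}$, split the cross term by independence of noise from the regressors, and apply Lemma \ref{dd:r:l:est:var} (with the contribution of the iid noise part handled by \ref{mo:no:as:ba:i} directly); the choice $K_n=\gauss{\maxnormsup^2\HnormV{\So}^2\sqrt{\oDi}/(\gamma\Sor^2\gA^2)}$ and $\mu_n\geq3+8\sum_{k>K_n}\beta_k$ is precisely what is needed so that for every $m\geq\oDi$ the resulting bound reads $H^2\leq (m/n)\sigma_Y^2\maxnormsup^2\mu_n$.

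For $v$ I would invoke Viennet's variance bound \eqref{dd:le:var:e3} for the $\beta$-mixing process $(\rNoL\rNo^b_i+\So(\rRe_i))\bas_j(\rRe_i)$ (which inherits the mixing coefficients $(\beta_k)$ of $(\rRe_i)$ since $\rNo^b$ is iid and independent of $\rRe$), combined with Cauchy--Schwarz and Lemma \ref{dd:le:b} applied with $p=2$: writing $\Ex(|v_t|^2b)\leq(\Ex|v_t|^4)^{1/2}(\Ex|b|^2)^{1/2}\leq(\inormV{v_t}^2\Ex|v_t|^2)^{1/2}\gB^{1/2}$, and using the regression counterpart of \eqref{id:r:p:co:pr:v}, namely $\sup_{t\in\Bz_m}\Ex|v_t|^2\leq\sigma_Y^2$, I expect to obtain $v=(4/q)\maxnormsup\sigma_Y(m\gB)^{1/2}(2\rNoL n^{1/4}+\Sor\gA)$. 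Plugging $h,H,v$ into Lemma \ref{id:ka:l:talagrand} and summing the resulting estimates over $\oDi\leq m\leq n$ yields a term of the form $\Psi(\,\cdot\,)$ that I would absorb into $\zeta(\Sor\gA,\rNoL,\maxnormsup,\gB)$, plus the residual deviation term $n^{3/2}q^2\exp(-n^{1/4}/(q\cdot 576(1+\Sor\gA/\rNoL)))$ coming from the $h/H$ ratio (the $n^{1/4}$ originates in the truncation level of $\rNo^b$).

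The main obstacle I anticipate is the bookkeeping in the computation of $v$: the Viennet-type bound gives a product $\inormV{v_t}\cdot(\Ex|v_t|^2)^{1/2}$ which inflates the variance constant by the truncation scale $n^{1/4}$, and this has to be balanced against $H$ through the exponent $KqH/h$ in Talagrand's inequality to yield precisely the factor $n^{1/4}/(q\cdot576(1+\Sor\gA/\rNoL))$ in the statement. A careful but routine tracking of the numerical constants (in particular verifying $H^2\leq 6(m/n)\sigma_Y^2\maxnormsup^2\mu_n$ with the stated choice of $\mu_n$) completes the argument; everything else is a transcription of the density-estimation argument in Lemma \ref{dd:le:cou2}.
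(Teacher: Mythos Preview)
Your overall strategy is the right one, and your computations of $h$ and $H$ are essentially those of the paper. The genuine gap is in your computation of $v$. You propose to apply Viennet's bound \eqref{dd:le:var:e3} to the \emph{full} function $v_t(\rNo^b,\rRe)=(\rNoL\rNo^b+\So(\rRe))\sum_{j\leq m}\fou{t}_j\bas_j(\rRe)$, obtaining $v=(4/q)\maxnormsup(m\gB)^{1/2}\sigma_Y(2\rNoL n^{1/4}+\Sor\gA)$. This $v$ carries an extra factor $n^{1/4}$ coming from $\inormV{v_t}$, and that factor is fatal for the \emph{first} term in Talagrand's inequality, not the second. Indeed, with your $v$ one has $pH^2/(6v)\asymp m^{1/2}/n^{1/4}$ and $v/p\asymp m^{1/2}n^{-3/4}$, so that
\[
\sum_{m=1}^{n}\frac{v}{p}\exp\Big(-\frac{pH^2}{6v}\Big)\asymp n^{-3/4}\sum_{m=1}^{n}m^{1/2}\exp\big(-c\,m^{1/2}n^{-1/4}\big)\asymp 1,
\]
which is $O(1)$ rather than the required $O(n^{-1})$. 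Your last paragraph suggests this $n^{1/4}$ is ``balanced against $H$ through the exponent $KqH/h$'', but that exponent governs only the $h$-term; the $v$-term has no such compensation.

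The paper avoids this by splitting \emph{before} applying Viennet: since the truncated noise $(\rNo^b_i)$ is iid and independent of $(\rRe_i)$, one first writes
\[
\Var\Big(\tfrac{1}{q}\sum_{i=1}^{q}v_t(\rNo^b_i,\rRe_i)\Big)\leq \frac{\rNoL^2}{q}+\Var\Big(\tfrac{1}{q}\sum_{i=1}^{q}\So(\rRe_i)\sum_{j=1}^m\fou{t}_j\bas_j(\rRe_i)\Big),
\]
so the noise contributes only $\rNoL^2/q$ with no $n^{1/4}$, and Viennet's bound is applied only to the $\So(\rRe)$-part. Using $\sup_{t\in\Bz_m}\Ex|\So(\rRe)\sum_j\fou{t}_j\bas_j(\rRe)|^2\leq\inormV{\So}^2$ and $\sup_{t\in\Bz_m}\inormV{\So\sum_j\fou{t}_j\bas_j}\leq m^{1/2}\maxnormsup\inormV{\So}$ one obtains $v=(m^{1/2}\maxnormsup/q)(\rNoL^2+4\inormV{\So}^2\gB^{1/2})$, which is free of $n^{1/4}$; then $pH^2/(6v)\asymp m^{1/2}$ and the sum over $m$ is of order $n^{-1}$, as claimed. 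With this correction the rest of your outline goes through.
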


\begin{proof}[\dr Proof of Lemma \ref{dd:r:le:cou2}] We prove  the
  first assertion, the proof of the second follows exactly in the same
  way and, hence we omit the details. 
In order to apply  Talagrand's inequality given in Lemma
  \ref{id:ka:l:talagrand} we need to compute the constants $h$, $H$ and
  $v$ which verify the three required inequalities. Keep in mind that
$\cou{\overline{\nu_t^{be}}}=\tfrac{1}{p}\sum_{l=1}^p\vec{v}_t(\vec{\rNo}^{\;be}_l,\couE_l)-\Ex\vec{v}_t(\vec{\rNo}^{\;be}_l,\couE_l)$
with
$\vec{v}_t(\vec{\rNo}^{\;be}_l,\couE_l)=\sum_{j=1}^\Di\fou{t}_j\vec{\psi_j}(\vec{\rNo}^{\;be}_l,\couE_l)$,
 $\vec{\psi_j}(\vec{\rNo}^{\;be}_l,\couE_l)=(1/q)\sum_{i\in\cI_l^e}\psi_j(\rNo^b_i,\cou{U}_i)$
and 
$\psi_j(\rNo^b_i,\cou{U}_i)=(\rNoL \rNo^b_i+\So(\cou{U}_i))\bas_j(\cou{U}_i)$,
where $|\vec{\rNo}^{\;be}_l|_\infty=\max_{i\in\cI_l^e}|\rNo^b_i|\leq2n^{1/4}$ and
  $\couE_l\in[0,1]^q$. Consider first $h$. As in
  \eqref{id:r:p:co:pr:h}, the assumption \ref{mo:no:as:ba:i} implies
\begin{equation}\label{dd:r:le:cou2:pr:h}\sup_{t\in\cB_m}\normInf{\vec{\nu_t}}^2
=\sum_{j=1}^\Di\inormV{\vec{\psi_j}}^2\leq\sum_{j=1}^m\normInf{\psi_j^2} \leq  \maxnormsup^2 m (2\rNoL n^{1/4} +\inormV{\So})^2=:h^2.
\end{equation}
 Consider next $H$. Exploiting successfully  
property \ref{dd:as:cou3}, 
the definition of $\vec{\psi_j}$ and the property
\ref{dd:as:cou1} together with the independence  within $\{\rNo_i\}$ and between  $\{\rNo_i\}$ and $\{\rRe_i\}$ we have 
\begin{equation}\label{dd:r:le:cou2:pr:e1}
\Ex\sup_{t\in\cB_m}|\cou{\overline{\nu_t^{be}}}|^2
\leq\frac{2\Di\rNoL^2\maxnormsup^2}{n}+\tfrac{1}{p}\sum_{j=1}^\Di\Var\big(\tfrac{1}{q}\sum_{i=1}^q\So(\rRe_i)\bas_j(\rRe_i)\big).
\end{equation}
Given  $K_n=\gauss{ 4  \maxnormsup^2 \HnormV{\So}^2
  \sqrt{\oDi} /(\gamma \Sor^2\gA^2)}$, Lemma \ref{dd:r:l:est:var}, assumptions
\ref{mo:no:as:ba:i} and \ref{dd:as:re:i} imply together  for all $m\geq \oDi$ that
\begin{equation*}
\sum_{j=1}^\Di \Var\big(\tfrac{1}{q}\sum_{i=1}^q\So(\rRe_i)\bas_j(\rRe_i)\big)\leq  \frac{\Di}{q} \maxnormsup^2\HnormV{\So}^2\{3/2 + 4 \sum_{k=K_n+1}^{q-1}\beta(\rRe_0,\rRe_k)]\}.
\end{equation*}
Thereby, from \eqref{dd:r:le:cou2:pr:e1} follows for any $\mu_n\geq \{3+ 8 \sum_{k=K_n+1}^{\infty}\beta_k\}$ that
\begin{equation}\label{dd:r:le:cou2:pr:H}
\Ex\sup_{t\in\cB_m}|\cou{\overline{\nu_t^e}}|^2\leq \frac{2\Di}{n}\rNoL^2\maxnormsup^2+\frac{\Di}{n}
\maxnormsup^2 \HnormV{\So}^2 \mu_n \leq \frac{\Di}{n}
\maxnormsup^2\sigma_Y^2\mu_n=:H^2.
\end{equation}
Consider finally $v$. Employing successively \ref{dd:as:cou3}, \ref{dd:as:cou1} and  \eqref{dd:le:var:e3} we have
\begin{multline}\label{dd:r:le:cou2:pr:e2}
	\sup_{t\in\cB_m}\tfrac{1}{p}\sum_{l=1}^p
        \Var(\vec{\nu_t}(\vec{\rNo}^{\;be}_l,\couE_l))
\leq \frac{\rNoL^2}{q}+\sup_{t\in\cB_m}\Var(\tfrac{1}{q}\sum_{i=1}^q\So(\rRe_i)\sum_{j=1}^m\fou{t}_j\bas_j(\rRe_i))\\
\leq  \frac{\rNoL^2}{q}+ \tfrac{4}{q}\sup_{t\in\cB_m}\{\Ex
|\So(\rRe_i)\sum_{j=1}^m\fou{t}_j\bas_j(\rRe_i)|^2\}^{1/2}\normInf{\So\sum_{j=1}^m\fou{t}_j\bas}\{2\sum_{k=0}^\infty
(k+1)\beta_k\}^{1/2}.
\end{multline}
Since 
$\sup_{t\in\cB_m}\Ex|\So(\rRe_i)\sum_{j=1}^m\fou{t}_j\bas_j(\rRe_i)|^2\leq\normInf{\So}^2$, $\sup_{t\in\cB_m}\normInf{\So\sum_{j=1}^m\fou{t}_j\bas_j}^2\leq
\Di \maxnormsup ^2\inormV{\So}^2$ and $\gB= 2\sum_{k=0}^\infty
(k+1)\beta_k$ it follows that 
\begin{equation}\label{dd:r:le:cou2:pr:v}
	\sup_{t\in\cB_m}\frac{1}{p}\sum_{l=1}^p
        \Var(\vec{\nu_t}(\vec{\rNo}^{\;be}_l,\couE_l))\\
\leq \frac{\Di^{1/2}\maxnormsup}{q}(\rNoL^2+4\normInf{\So}^2\gB^{1/2})=:v.
\end{equation}
The assertion follows from 
 Lemma \ref{id:ka:l:talagrand} by using the quantities $h$, $H$ and
$v$ given in \eqref{dd:r:le:cou2:pr:h},
\eqref{dd:r:le:cou2:pr:H} and
\eqref{dd:r:le:cou2:pr:v}, respectively, and by employing $\mu_n\geq 3/2$, $(\rNoL +\inormV{\So})^2/\sigma_Y^2\leq
2(1+\inormV{\So}/\rNoL)^2$, and  $\normInf{\So}\leq \Sor\gA$ for all $\So\in\Socwr$,  which completes the proof.
\end{proof}

\begin{proof}[\dr Proof of Lemma \ref{dd:r:l:re}]
Since $\Ex Y_1^2=\sigma_Y^2$ using successively the Tchebysheff inequality, the inequality \eqref{dd:le:var:e3}, the Cauchy-Schwarz inequality and Lemma \ref{dd:le:b} we get
\begin{equation*}
\proba{\left|n^{-1}\sum_{i=1}^n\left(\frac{Y_i^2}{\sigma_Y^2}-1\right)\right|\geq\frac{1}{2}}
\leq 16n^{-1}(\Ex Y_1^4/\sigma_Y^4)^{1/2}(2\sum_{k=0}^\infty(k+1)\beta_k)^{1/2}
\end{equation*}
which implies with $\Ex(Y_1^4/\sigma_Y^4)\leq 8 \frac{\sigma^4\Ex\epsilon^4+\normInf{\So}^4}{(\sigma^2+\normV{\So}^2)^2}
\leq32\{\frac{\sigma(\Ex\epsilon^4)^{1/4}+\normInf{\So}}{\sigma+\normV{\So}}\}^{4}
$ the desired assertion.
\end{proof}

%%% Local Variables: 
%%% mode: latex
%%% TeX-master: "_0DP_NPE_dep"
%%% End: 
  
% --------------------------------------------------------------------
% <<BibFile>>
% --------------------------------------------------------------------
\bibliography{./DP_NPE_dep} 
\end{document}